\numberwithin{equation}{section}
\def\half{\tfrac{1}{2}}
\def \R{\mathbb{R}}
\def \E{\mathbb{E}}
\def \N{\mathbb{N}}
\def \coeff{\operatorname{coeff}}
\def \d{{\, \rm d}}
\def \P{\mathcal{P}_h}
\def \Cs{\mathcal{C}_{s}}
\def \Cl{\mathcal{C}_{l}}
\newtheorem{theorem}{Theorem}[section]
\newtheorem{lemma}[theorem]{Lemma}
\newtheorem{proposition}[theorem]{Proposition}
\newtheorem{assumption}[theorem]{Assumption}
\newtheorem{example}[theorem]{Example}
\newtheorem{definition}[theorem]{Definition}
\newtheorem{remark}[theorem]{Remark}
\providecommand{\keywords}[1]{\textbf{\text{Keywords: }} #1}
\begin{document}
	
\title{Unconditionally positivity-preserving explicit order-one strong approximations of financial SDEs with non-Lipschitz coefficients
\thanks {This work was supported by Natural Science Foundation of China (12501560, 12401521, 12371417).}}
\author{ Xiaojuan Wu $^\text{a}$, \quad Ruishu Liu $^\text{b,c}${\thanks{Corresponding author: rsliu@szu.edu.cn
}} ,
\quad Jiahao Xu $^\text{d}$
\\
\footnotesize $^\text{a}$ School of Mathematics and Statistics, Hunan First Normal University, Changsha, China\\
\footnotesize $^\text{b}$School of Artificial Intelligence, 
Shenzhen University, China\\
\footnotesize $^\text{c}$ National Engineering Laboratory for Big Data System Computing Technology,
Shenzhen University, China\\
\footnotesize $^\text{d}$ Dundee International Institute, Central South University, Changsha, China
}

\maketitle
	
\begin{abstract}
\footnotesize

In this paper, we are interested in positivity-preserving approximations of stochastic differential equations (SDEs) with non-Lipschitz coefficients, arising from computational finance and possessing positive solutions. 
By leveraging a Lamperti transformation, we develop a novel, explicit, and unconditionally positivity-preserving numerical scheme for the considered financial SDEs.
More precisely, an implicit term $c_{-1}Y_{n+1}^{-1}$ is incorporated in the scheme to guarantee unconditional positivity preservation, and a corrective operator is introduced in the remaining explicit terms to address the challenges posed by non-Lipschitz (possibly singular) coefficients of the transformed SDEs.
By finding a unique positive root of a quadratic equation, the proposed scheme can be explicitly solved and is shown to be strongly convergent with order $1$, when used to numerically solve several well-known financial models such as the CIR process, the Heston-3/2 volatility model, the CEV process and the A\"it-Sahalia model. Numerical experiments validate the theoretical findings.
\end{abstract}
	
\keywords{
    SDEs with non-Lipschitz coefficients; 
    Financial models;
    Lamperti transformation;  
    Unconditionally positivity preserving scheme; 
    Explicit scheme; 
    Order $1$ strong convergence
}
	
\bigskip
\normalsize
\section{Introduction}\label{section:introduction}

In the realm of quantitative finance, stochastic differential equations (SDEs) serve as a pivotal tool for modeling various financial phenomena, including asset price dynamics, interest rates, and volatility processes. 
A notable feature of many such models is the presence of non-Lipschitz coefficients, reflecting the complexities and irregularities of real-world financial systems.
As closed-form solutions of these SDEs are rarely available, numerical approximations become necessary in applications.

Although the numerical analysis of SDEs with globally Lipschitz coefficients is well understood (see \cite{kloeden1992numerical,milstein2013stochastic}), numerical approximations of SDEs with non-Lipschitz coefficients meet essential difficulties and are now still an active area.
In 2011, Hutzenthaler, Jentzen, and Kloeden \cite{hutzenthaler2011strong} demonstrated that the widely-used Euler-Maruyama (EM) method (see, e.g., \cite{kloeden1992numerical,HK_book}) produces divergent results when applied to a broad class of SDEs with super-linearly growing coefficients.
Over the past few decades, significant progress has been made in approximating SDEs with super-linearly growing coefficients, by relying on implicit schemes \cite{beyn2016stochastic,beyn2017stochastic,higham2002strong,wang2020mean,andersson2017mean,wang2023mean,szpruch2011numerical,yang2022numerical}, 
or some modified explicit methods based on some strategies such as taming and truncation \cite{hutzenthaler2012strong,sabanis2013note,sabanis2016euler,wang2013tamed,mao2015truncated,tretyakov2013fundamental,mao2016convergence,hutzenthaler2020perturbation,brehier2020approximation,cai2022positivity,yang2024strong,yi2021positivity,jentzen2009pathwise,hutzenthaler2012strong,deng2023positivity,li2024strong,li2024explicit,shi2025convergence,chassagneux2016explicit,liu2025unconditionally,jiang2025unconditionally}.
Albeit computationally efficient, explicit numerical methods often fail to preserve key properties of the exact solution, such as positivity and domain constraints, which are crucial in applications like option pricing and risk management. 
In contrast, some implicit methods have an inherent advantage in preserving positivity and stability (see, e.g., \cite{wang2020mean,wang2023mean,neuenkirch2014first} and references therein), but at the expense of high computational costs due to solving nonlinear systems per step. 
An interesting and natural question thus arises:

\noindent{\bf (Q).} {\it Can one develop positivity-preserving explicit schemes for a class of non-Lipschitz SDEs in computational finance,
with a strong convergence rate revealed?}

In this paper, we attempt to provide a positive answer to this question.
More specifically, we aim to construct a positivity-preserving explicit scheme for scalar SDEs in a general form:
\begin{align}\label{2024explicit-eq:introduction_original_SDE}
\begin{cases}
    \d \mathbb{X}_t
    =
    f(\mathbb{X}_t) \d t
    +
    g(\mathbb{X}_t) \d W_t,
    \quad
    t \geq 0,\\
    \mathbb{X}_0 > 0,
\end{cases}
\end{align}
where $(W_t)_{t\in[0,+\infty)}$ is a standard Brownian motion and coefficients $f,g$ might be non-Lipschitz.
Our approach essentially relies on a Lamperti transformation $\mathbb{L}: (0,+\infty) \rightarrow (0,+\infty)$, which converts the above SDEs with multiplicative noise into transformed ones with additive noise as follows:
\begin{align}\label{2024explicit-eq:introduction_transformed_SDE}
\begin{cases}
    \d X_t
    =
    \mu(X_t) \d t
    +
    \sigma \d W_t, 
    \quad
    t \geq 0,\\
    X_0 
    =
    \mathbb{L}(\mathbb{X}_0).
\end{cases}
\end{align}
In many practical financial models, the transformed drift $\mu$ takes the form of a fractional Laurent polynomial (FLP, see Definition \ref{2024explicit-def:FLP}). 
This special structure provides two key advantages. First, the FLP for the considered financial models satisify the so-called monotonicity condition \eqref{eq:monoton-condition}, which facilitate obtaining a convergence rate for a numerical scheme applied to the transformed SDE \eqref{2024explicit-eq:introduction_transformed_SDE}. Second, the FLP contains a reciprocal function $x^{- \zeta}, \zeta \geq 1$, which helps us design an efficient positivity-preserving with bounded inverse moments (see Lemma \ref{2024explicit-lem:inverse_moment_bounds_Y}).
Thanks to the particular structure of $\mu$, we introduce
\begin{align}\label{2024explicit-eq:introduction_hat_mu_definition}
    \hat \mu(x)
    : =
    \mu(x) 
    -
    c_{-1}  x^{-1},
    \quad
    c_{-1}>0
\end{align}
and split the drift $\mu$ into two parts $\mu(x) = c_{-1}  x^{-1} +  \hat \mu(x)$, which will be numerically treated in a different way. More accurately, we
propose a time-stepping scheme for the transformed SDE \eqref{2024explicit-eq:introduction_transformed_SDE}, on a uniform mesh $\{t_n = n h\}_{n=0}^M, M \in \N$ over $[0,T]$, with a uniform step size $h = \tfrac{T}{M} \in(0,1]$:
\begin{align}
\label{2024explicit-eq:introduction_scheme}
\begin{cases}
    Y_{n+1}
    =
    \P (Y_n) 
    +
    c_{-1} h Y_{n+1}^{-1}
    +
    \hat \mu (\P(Y_n)) h
    +
    \sigma \Delta W_n,
    \:
    n=0,1,...,M-1,\\
    Y_0 = X_0,
\end{cases}
\end{align}
where $\Delta W_n := W_{t_{n+1}} - W_{t_n}$.
The key idea of the scheme \eqref{2024explicit-eq:introduction_scheme} 
lies in treating two parts of the drift coefficient $\mu$ in a different manner. An implicit term $c_{-1}Y_{n+1}^{-1}$ is incorporated for the first part to guarantee the unconditional positivity preserving and a corrective operator $\P$ is introduced in the remaining term $\hat\mu(\P(Y_n))$ to address the challenges posed by non-Lipschitz (possibly singular) coefficients of the transformed SDEs.
This novel design ensures that the scheme benefits from the positivity-preserving property of implicit methods \cite{alfonsi2013strong,neuenkirch2014first,liu2025strong}, while retaining the computational efficiency of explicit methods. 
Indeed, the proposed scheme can be explicitly solved, by finding a unique positive root of a quadratic equation for $Y_{n+1}$.

Based on the numerical approximations of the transformed SDE \eqref{2024explicit-eq:introduction_transformed_SDE}, the numerical approximation of the original SDE \eqref{2024explicit-eq:introduction_original_SDE} is naturally obtained via the inverse transformation $\mathbb{L}^{-1}$:
\begin{equation}
\label{2024explicit-eq:introduction_scheme2}
\mathbb{Y}_n = \mathbb{L}^{-1} (Y_n).
\end{equation}
The error analysis for the approximation $\mathbb{Y}_n$ produced by \eqref{2024explicit-eq:introduction_scheme2} consists of two steps.
As the first step, under general assumptions we provide upper error bounds of the proposed scheme \eqref{2024explicit-eq:introduction_scheme} for the transformed SDE (see Theorem \ref{2024explicit-thm:main_convergence_result}),
which only get involved with the exact solution
processes of the transformed SDE.
Moreover, bounded moments and upper bounds of inverse moments for the numerical solution \eqref{2024explicit-eq:introduction_scheme} are  established in Lemma \ref{2024explicit-lem:Y_integrability} and Lemma \ref{2024explicit-lem:inverse_moment_bounds_Y}, respectively. 
As the second step, we proceed to reveal the strong convergence rate of the proposed scheme for four different financial models: the CIR process, the Heston-3/2 volatility model, the CEV process and the A\"it-Sahalia model. To the end, we carefully analyze the error bounds obtained in the first step for these four models, case by case. Finally, we prove the desired convergence rate of order $1$ for the positivity-preserving scheme applied to these financial models (cf. Propositions \ref{2024explicit-prop:CIR}, \ref{2024explicit-prop:32model}, \ref{2024explicit-prop:CEVmodel} and \ref{2024explicit-prop:ASmodel}).

The rest of this paper is organized as follows.
The next section presents a general setting.
Section \ref{2024explicit-sec:scheme_and_properties} introduces the proposed numerical method and its properties. 
Section \ref{2024explicit-sec:error_analysis} is dedicated to error estimates for the transformed SDE. 
In Section \ref{2024explicit-sec:examples}, we apply our scheme to four different well-known financial models and establish strong convergence results. 
Finally, numerical experiments are reported in Section \ref{2024explicit-sec:numerical_experiments} to verify the theoretical findings.
A short conclusion is provided in Section \ref{2024explicit-section:conclusion}.

\section{Settings}
\label{2024explicit-sec:preliminaries}
Throughout this paper, we let $\mathbb{N}$ denote the set of nonnegative integers and let $T \in (0,+\infty)$, $M \in \mathbb{N}$.
Let $W = \big(W_t\big)_{t \geq 0}$ be a standard Brownian motion defined on a complete filtered probability space $(\Omega, \mathcal{F}, (\mathcal{F}_t)_{t\geq 0}, \mathbb{P})$, where the filtration satisfies usual conditions, and let $ \E $ denote the expectation.
For any two real numbers $a,b$, we denote $a \vee b := \max\{a,b\}$ and $a \wedge b := \min\{a,b\}$.

We begin with the following scalar SDE:
\begin{align}\label{2024explicit-eq:original_SDE}
\begin{cases}
    \d \mathbb{X}_t
    =
    f(\mathbb{X}_t) \d t
    +
    g(\mathbb{X}_t) \d W_t,
    \quad
    t \geq 0,\\
    \mathbb{X}_0 > 0,
\end{cases}
\end{align}
where $f,g: (0,+\infty) \rightarrow \R$ are continuously differentiable functions.
To make the setting general, we impose the following assumption to ensure the well-posedness of this SDE.
\begin{assumption}\label{2024explicit-ass:solution_existence_and_uniqueness}
    The SDE \eqref{2024explicit-eq:original_SDE} has a unique strong solution $\mathbb{X} = \{ \mathbb{X}_t \}_{t \geq 0}$ taking values in $(0,+\infty)$, i.e.,
$
        \mathbb{P}\big(
            \mathbb{X}_t \in (0,+\infty),\, t \geq 0
        \big)
        =
        1.
$
\end{assumption}
If $g(x)>0$, $x \in (0,+\infty)$, we define a Lamperti-type transformation (see \cite{neuenkirch2014first,liu2025strong} for more details) as follows:
\begin{equation}\label{2024explicit-eq:Lamperti_transformation}
	\mathbb{L}(x)
	=
	\sigma \int_{a}^x \tfrac{1}{g(u)} \, d u + b, \quad x \in (0,+\infty),
\end{equation}
where $a\in (0,+\infty)$ and $b \in \R$ are constants selected to simplify the transformation, and $\sigma \neq 0$ is an arbitrary constant.
Applying the transformation $X_t = \mathbb{L} (\mathbb{X}_t) $ to \eqref{2024explicit-eq:original_SDE}, one obtains
\begin{align}\label{2024explicit-eq:transformed_SDE}
    \d X_t
    =
    \mu(X_s) \d s
    +
    \sigma \d W_s,
\end{align}
where 
\begin{equation*}
		\mu(x)
		=
		\sigma \Big(
		\tfrac{f(\mathbb{L}^{-1}(x))}{g(\mathbb{L}^{-1}(x))}
		-
		\half g'(\mathbb{L}^{-1}(x))
		\Big).
\end{equation*}
For many practical financial models, the transformed drift function $\mu$  has a fractional Laurent polynomial (FLP) structure, whose definition and relative notations are given as follows.
\begin{definition}{\cite[Definition 6.1]{liu2025strong}}
\label{2024explicit-def:FLP}
    A rational function $p:(0,+\infty) \rightarrow \mathbb{R}$ is called a fractional Laurent polynomial on $(0, +\infty)$, if there exist
    \begin{enumerate}[(1)]
        \item $e_1,e_2 \in \mathbb{Z}$, {$e_1 \leq e_2$,}
        \item $a_n \in \R$ for $n = e_1,e_1+1, \ldots ,e_2$,
        \item $s_n \in \R$ for $n = e_1,e_1+1, \ldots ,e_2$ with $s_{n_1} \neq s_{n_2}$ for $n_1 \neq n_2$,
    \end{enumerate}
    such that
    \begin{equation*}
        p(x) = \sum_{n = e_1}^{e_2} a_n x^{s_n},
        \quad
        x > 0.
    \end{equation*}
\end{definition}
\begin{definition}{\cite[Definition 6.2]{liu2025strong}}
    For an FLP $p$ on $(0,+\infty)$, set
\begin{align*}
          {\deg}^{+}(p) 
           &  = 
            \max \{ 
                s_n: n = e_1,...,e_2 \, \text{ with } \,  a_n \neq 0 
            \}, \\
           {\coeff}^{+}(p)
            &= 
            a_{n^{+}(p)}
            \end{align*}
            as well as
            \begin{align*}
           {\deg}^{-}(p) 
            & = 
            \min \{ 
                s_n: n = e_1,...,e_2 \, \text{ with } \,  a_n \neq 0 
            \},\\
         {\coeff}^{-}(p)
           & = 
            a_{{n}^{-}(p)}.
    \end{align*}
\end{definition}
Moreover, the transformed drift $\mu$ in many financial models satisfies the following one-sided Lipschitz condition.
\begin{assumption}
\label{2024explicit-ass:mu_one_sided_Lipschitz}
    The continuously differentiable function $\mu: (0, + \infty) \rightarrow \R$ 
    satisfies
    \begin{equation} \label{eq:monoton-condition}
        \big\langle 
            \mu (x) - \mu (y),
            x - y
        \big\rangle
        \leq
        L_0 \vert x - y \vert^2
    \end{equation}
    for some constant $L_0 > 0$.
\end{assumption}
The following lemma specifies when an FLP drift satisfies Assumption \ref{2024explicit-ass:mu_one_sided_Lipschitz}.
\begin{lemma}
\label{2024explicit-lem:FLP_properties}
    Let $g$ be an FLP with ${\deg}^{-}(g) < 0 \leq {\deg}^{+}(g)$.
    Then $g$ is one-sided Lipschitz on $(0,+\infty)$ if and only if one of the following stands:
        \begin{enumerate}[(a)]
            \item $\coeff^{-}(g) > 0$, ${\deg}^{+}(g)\leq 1$.
            \item $\coeff^{-}(g) > 0$, ${\coeff}^{+}(g) < 0$.
        \end{enumerate}    
\end{lemma}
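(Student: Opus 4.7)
The plan is to reduce the monotonicity inequality \eqref{eq:monoton-condition} to a one-sided pointwise bound on $g'$, and then to read off the required sign conditions from the asymptotic behaviour of $g'$ at $0^{+}$ and at $+\infty$. Since $g \in C^{1}((0,+\infty))$, the identity
\[
g(x) - g(y) \;=\; (x-y)\int_{0}^{1} g'(y + t(x-y))\,\mathrm{d}t
\]
shows that \eqref{eq:monoton-condition} holds for some constant $L_{0} > 0$ if and only if $g'$ is bounded above on $(0,+\infty)$. Differentiating the FLP representation of $g$ termwise yields
\[
g'(x) \;=\; \sum_{n = e_{1}}^{e_{2}} a_{n}\,s_{n}\,x^{s_{n}-1}, \qquad x > 0,
\]
which is continuous on $(0,+\infty)$ and therefore automatically bounded on every compact subinterval. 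Hence the only possible obstructions to $\sup_{x > 0} g'(x) < +\infty$ come from the limits $x \to 0^{+}$ and $x \to +\infty$, governed respectively by the terms of $g$ of smallest and largest exponent.

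Near zero, the dominant contribution to $g'$ is $\coeff^{-}(g)\,\deg^{-}(g)\,x^{\deg^{-}(g)-1}$. Since $\deg^{-}(g) < 0$, the factor $x^{\deg^{-}(g)-1}$ diverges as $x \to 0^{+}$, so for $g'$ to stay bounded above the product $\coeff^{-}(g)\,\deg^{-}(g)$ must be strictly negative; combined with $\coeff^{-}(g) \neq 0$ and $\deg^{-}(g) < 0$ this is equivalent to $\coeff^{-}(g) > 0$. Near $+\infty$, the analysis splits on $\deg^{+}(g)$: if $\deg^{+}(g) > 1$, the leading term $\coeff^{+}(g)\,\deg^{+}(g)\,x^{\deg^{+}(g)-1}$ of $g'$ blows up and the boundedness condition forces $\coeff^{+}(g) < 0$; if $\deg^{+}(g) = 1$, the leading contribution of $g'$ is the finite constant $\coeff^{+}(g)$ and no sign restriction arises; if $\deg^{+}(g) = 0$, the constant term of $g$ is annihilated by differentiation, every surviving exponent in $g'$ is $< -1$, and $g'(x) \to 0$. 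Collecting the two regimes yields the necessary condition $\coeff^{-}(g) > 0$ together with ($\deg^{+}(g) \leq 1$ or $\coeff^{+}(g) < 0$), i.e.\ precisely $(a) \vee (b)$.

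For sufficiency, the same asymptotic computations run in the reverse direction show that under either (a) or (b) the dominant term at $0^{+}$ has negative coefficient, so $g'(x) \to -\infty$ as $x \to 0^{+}$; while at $+\infty$ the derivative is either bounded (case $\deg^{+}(g) \leq 1$) or tends to $-\infty$ (case $\deg^{+}(g) > 1$ with $\coeff^{+}(g) < 0$). Combined with continuity of $g'$ on any compact subinterval of $(0,+\infty)$, this produces $\sup_{x > 0} g'(x) < +\infty$, which gives the one-sided Lipschitz property and supplies the constant $L_{0}$.

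The main obstacle is making the ``dominant term'' comparison rigorous at $+\infty$ in the boundary subcases $\deg^{+}(g) \in \{0, 1\}$, where after differentiation the intended leading exponent of $g'$ can be zero or even negative and does not obviously dominate the remaining surviving exponents. The resolution is to observe that differentiation shifts every exponent by $-1$ and kills exactly the $s_{n} = 0$ term, so that among the surviving exponents of $g'$ the one coming from $\deg^{+}(g)$ (when $\deg^{+}(g) \neq 0$), or from the second-largest exponent of $g$ (when $\deg^{+}(g) = 0$), is strictly the largest; once this bookkeeping is settled the standard FLP asymptotics close the argument.
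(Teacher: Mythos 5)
Your proof is correct. Note that the paper states Lemma \ref{2024explicit-lem:FLP_properties} without any proof (the FLP framework is quoted from the cited companion work), so there is no internal argument to compare against; your route---reducing the one-sided Lipschitz property \eqref{eq:monoton-condition} to upper boundedness of $g'$ on $(0,+\infty)$ via the integral mean-value identity, then reading off the sign constraints from the leading term of $g'$ at $0^{+}$ and at $+\infty$---is the natural argument, and your exponent bookkeeping (differentiation kills exactly the $s_n=0$ term and shifts all surviving exponents by $-1$) correctly handles the boundary cases $\deg^{+}(g)\in\{0,1\}$. The only step worth spelling out fully is the ``only if'' half of the equivalence with $\sup_{x>0}g'(x)<+\infty$: from \eqref{eq:monoton-condition} one gets $\int_0^1 g'\big(y+t(x-y)\big)\d t\leq L_0$ for all $x,y>0$, and one must let $x\to y$ and invoke continuity of $g'$ to conclude $g'(y)\leq L_0$ pointwise.
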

Throughout this work, we will consistently assume $\mu$ to be an FLP and satisfy the one-sided Lipschitz condition.
Within the above framework,
the integrability of the analytical solution to the transformed SDE \eqref{2024explicit-eq:transformed_SDE} can be easily deduced.
Similar results can be found in \cite[Lemma 2.5]{neuenkirch2014first}, the proof of which is omitted here.
\begin{lemma}
\label{2024explicit-lem:X_integrability}
    Let Assumptions \ref{2024explicit-ass:solution_existence_and_uniqueness} and \ref{2024explicit-ass:mu_one_sided_Lipschitz} hold.
    For any $p > 0$, it holds that
    \begin{equation}
        \E \Big[
            \sup_{t \in [0,T]}
            \vert X_t \vert^{p}
        \Big]
        <
        +\infty.
    \end{equation}
\end{lemma}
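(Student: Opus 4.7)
The strategy is to apply It\^o's formula to $|X_t - X_0|^{2k}$ for each positive integer $k$ and to invoke the one-sided Lipschitz condition \eqref{eq:monoton-condition} anchored at the deterministic interior point $y = X_0 \in (0,+\infty)$. Anchoring at $X_0$ rather than at $0$ is essential: since $\mu$ may be singular at the origin (cf.\ the $x^{-1}$ term appearing in the Lamperti-transformed drift of models like CIR, A\"it-Sahalia or Heston-3/2), the monotonicity condition cannot be invoked at $x = 0$, but evaluating at the interior point $x_0 = X_0 > 0$ replaces the singular contribution by the finite constant $|\mu(X_0)|$.

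First, It\^o's formula yields
\begin{equation*}
\begin{aligned}
\d |X_t - X_0|^{2k} = \Big( & 2k |X_t - X_0|^{2k-2} (X_t - X_0)\mu(X_t) + k(2k-1)\sigma^2 |X_t - X_0|^{2k-2}\Big)\d t \\
& + 2k\sigma(X_t - X_0)|X_t - X_0|^{2k-2}\, \d W_t.
\end{aligned}
\end{equation*}
The monotonicity condition at $y = X_0$ gives $(X_t - X_0)\mu(X_t) \leq L_0|X_t - X_0|^2 + |X_t - X_0||\mu(X_0)|$. Young's inequality then absorbs the lower-order contributions $|\mu(X_0)||X_t - X_0|^{2k-1}$ and $\sigma^2 |X_t - X_0|^{2k-2}$ into $|X_t - X_0|^{2k}$ plus an additive constant, producing a pointwise drift bound of the form $C_k(1 + |X_t - X_0|^{2k})$, where $C_k$ depends only on $k$, $L_0$, $\sigma$ and $|\mu(X_0)|$.

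Next, introduce the localising stopping times $\tau_n := \inf\{t \geq 0 : X_t \notin [1/n, n]\}$, which satisfy $\tau_n \uparrow \infty$ a.s.\ by Assumption \ref{2024explicit-ass:solution_existence_and_uniqueness}. On $[0, t \wedge \tau_n]$ the It\^o integral is a true martingale with mean zero; after taking expectations, Gronwall's inequality followed by Fatou's lemma as $n \to \infty$ yields $\sup_{t \in [0,T]}\E[|X_t - X_0|^{2k}] < +\infty$. To upgrade this to the pathwise supremum, apply the Burkholder-Davis-Gundy inequality to the martingale part, giving
\begin{equation*}
\E\Big[\sup_{t \leq T}|M_t|\Big] \leq C\, \E\Big[\Big(\int_0^T |X_s - X_0|^{4k-2}\d s\Big)^{1/2}\Big] \leq C\sqrt{T}\, \E\Big[\sup_{s \leq T}|X_s - X_0|^{2k-1}\Big],
\end{equation*}
which is absorbed into $\E[\sup_{s \leq T}|X_s - X_0|^{2k}]$ by a further application of Young's inequality. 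Finally, the bound $|X_t|^p \leq 2^{p-1}(|X_t - X_0|^p + |X_0|^p)$ combined with Jensen's inequality (choosing $k \in \N$ with $2k \geq p$) extends the estimate to every real $p > 0$.

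I expect the principal difficulty to lie in the correct choice of anchor point for the monotonicity inequality, together with managing the circular dependence in the supremum-BDG absorption step. The former is resolved by the choice $y = X_0 > 0$, which sidesteps the singularity of $\mu$ at the origin entirely; the latter is broken by first establishing the \emph{pointwise} moment bound $\sup_t \E[|X_t - X_0|^{2k}] < +\infty$ via Gronwall on the stopped process, and only then invoking BDG to obtain the supremum-in-time bound, at which stage all quantities appearing in the Young absorption are already finite and the argument closes.
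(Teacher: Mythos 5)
A preliminary remark on the comparison: the paper does not actually prove this lemma --- it defers to \cite[Lemma 2.5]{neuenkirch2014first} --- so your proposal must be measured against the standard argument that reference employs. Your overall route (It\^o's formula for $|X_t-X_0|^{2k}$, the one-sided Lipschitz condition anchored at the strictly positive point $X_0$ to sidestep the singularity of $\mu$ at the origin, localization plus Gronwall for pointwise moments, then Burkholder--Davis--Gundy for the running supremum) is exactly that standard route, and your anchoring observation is correct and is indeed the step where the positivity of the solution from Assumption \ref{2024explicit-ass:solution_existence_and_uniqueness} enters.

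There is, however, one genuine flaw, located precisely at the point you flagged as the ``circular dependence''. After BDG and Young you arrive at an inequality of the form
\begin{equation*}
\E\Big[\sup_{t\le T}|X_t-X_0|^{2k}\Big]
\;\le\;
A + \tfrac12\,\E\Big[\sup_{t\le T}|X_t-X_0|^{2k}\Big],
\qquad A<+\infty,
\end{equation*}
and you conclude by subtracting the half term from both sides. This subtraction is legitimate only if $\E\big[\sup_{t\le T}|X_t-X_0|^{2k}\big]$ is already known to be finite; otherwise the inequality reads $+\infty\le+\infty$ and yields nothing. Your justification --- that the pointwise bound established first renders ``all quantities appearing in the Young absorption already finite'' --- conflates $\sup_{t\le T}\E\big[|X_t-X_0|^{2k}\big]$ (which your Gronwall step does give) with $\E\big[\sup_{t\le T}|X_t-X_0|^{2k}\big]$ (which is exactly the unknown quantity being absorbed). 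The repair is routine and uses tools you already introduced: carry out the BDG/Young step on the stopped process $X_{\cdot\wedge\tau_n}$, for which $\E\big[\sup_{t\le T\wedge\tau_n}|X_t-X_0|^{2k}\big]$ is trivially finite because the stopped path is confined to $[1/n,n]$; the drift (Lebesgue-integral) term is bounded uniformly in $n$ by your pointwise Gronwall estimate; the absorption is then valid and produces a bound on $\E\big[\sup_{t\le T\wedge\tau_n}|X_t-X_0|^{2k}\big]$ that is uniform in $n$, after which monotone convergence as $n\to\infty$ completes the argument. With the localization retained through this final step, rather than discarded once the pointwise bound is obtained, your proof is complete and coincides with the standard one.
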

A priori bounds on the inverse moments of the analytical solution are necessary for the subsequent error analysis. 
To address this, we make the following assumption.
\begin{assumption}
\label{2024explicit-ass:inverse_moment_boundedness_of_analytical_solution}
    For some
    $p^* \in (2,+\infty]$, it holds that
    \begin{equation}
    \begin{split}
        \sup_{t \in [0,T]}
        \E \big[
            \vert X_t \vert^{-p}
        \big]
        & <
        +\infty,
        \quad
        \forall 0 < p \leq p^*.
    \end{split}
    \end{equation}
\end{assumption}

\section{The proposed scheme and its properties}\label{2024explicit-sec:scheme_and_properties}

For $M \in \N$ we construct a uniform mesh $\{t_n = n h\}_{n=0}^M$ over $[0,T]$ with a uniform step size $h = \tfrac{T}{M} \in(0,1]$.
On the uniform mesh, we construct a novel scheme:
\begin{align}
\label{2024explicit-eq:scheme}
    Y_{n+1}
    =
    \P (Y_n) 
    +
    c_{-1} h Y_{n+1}^{-1}
    +
    \hat \mu (\P(Y_n)) h
    +
    \sigma \Delta W_n,
\end{align}
where $\P:\R\rightarrow(0,+\infty)$ is a correction function, $c_{-1}>0$, $\Delta W_n := W_{t_{n+1}} - W_{t_n}$ and
\begin{align}\label{2024explicit-eq:hat_mu_definition}
    \hat \mu(x)
    : =
    \mu(x) 
    -
    c_{-1}  x^{-1}.
\end{align}
The choice of the positive constant $c_{-1}$ depends on the form of the FLP $\mu$, with the aim of ensuring that $\hat\mu$ satisfies the following one-sided Lipschitz condition.
\begin{assumption}
\label{2024explicit-ass:hat_mu_one_sided_Lipschitz}
    For any $x,y > 0$, there exists a positive constant $L$ such that
    \begin{align}\label{2024explicit-eq:hat_mu_one_sided_Lipschitz}
        \big\langle 
            \hat \mu (x) - \hat \mu (y),
            x - y
        \big\rangle
        & \leq
        L \vert x - y \vert^2.
    \end{align}
\end{assumption}
The above assumptions imply the following results.
\begin{lemma}\label{2024explicit-lem:deg_mu}
    Let Assumptions \ref{2024explicit-ass:mu_one_sided_Lipschitz} and \ref{2024explicit-ass:hat_mu_one_sided_Lipschitz} stand.
    Then
    \begin{equation}
        \deg^-(\mu) \leq -1, \quad \coeff^-(\mu) > 0.
    \end{equation}
\end{lemma}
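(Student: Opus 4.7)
The plan is to translate the one-sided Lipschitz hypotheses of Assumptions~\ref{2024explicit-ass:mu_one_sided_Lipschitz} and~\ref{2024explicit-ass:hat_mu_one_sided_Lipschitz} into pointwise derivative bounds and then examine the leading behaviour of $\mu'$ and $\hat\mu'$ near the origin. Since $\mu$ and $\hat\mu$ are continuously differentiable scalar functions on $(0,+\infty)$, the mean value theorem gives the equivalences
\begin{equation*}
\text{Assumption \ref{2024explicit-ass:mu_one_sided_Lipschitz}} \Longleftrightarrow \mu'(x) \leq L_0, \qquad \text{Assumption \ref{2024explicit-ass:hat_mu_one_sided_Lipschitz}} \Longleftrightarrow \hat\mu'(x) \leq L, \qquad x>0.
\end{equation*}
Writing the FLP explicitly as $\mu(x) = \sum_{n=e_1}^{e_2} a_n x^{s_n}$ with distinct $s_n$ and (without loss of generality) $a_n \neq 0$, differentiation yields
\begin{equation*}
\hat\mu'(x) = c_{-1} x^{-2} + \sum_{n=e_1}^{e_2} a_n s_n x^{s_n - 1}.
\end{equation*}

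The first step is to establish $\deg^{-}(\mu) \leq -1$ by contradiction. Suppose $s_n > -1$ for every $n$; then $s_n - 1 > -2$ for every $n$, so each term in the displayed sum is $o(x^{-2})$ as $x \to 0^{+}$. Consequently $\hat\mu'(x) \sim c_{-1} x^{-2} \to +\infty$, contradicting the bound $\hat\mu'(x) \leq L$. Hence $\deg^{-}(\mu)\leq -1$.

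The second step uses only the one-sided Lipschitz property of $\mu$ to pin down the sign of $\coeff^{-}(\mu)$. Set $s_m := \deg^{-}(\mu) \leq -1$ and $a_m := \coeff^{-}(\mu) \neq 0$. Because $s_m < 0$, the term $a_m s_m x^{s_m - 1}$ is nonzero in $\mu'(x)=\sum_n a_n s_n x^{s_n-1}$, and since the $s_n$ are distinct with $s_m$ minimal, $s_m - 1$ is strictly smaller than every other exponent appearing in $\mu'$. Thus $\mu'(x) \sim a_m s_m x^{s_m - 1}$ as $x \to 0^{+}$ with $x^{s_m - 1} \to +\infty$. For the bound $\mu'(x) \leq L_0$ to survive near zero, we must have $a_m s_m \leq 0$; combined with $s_m < 0$ and $a_m \neq 0$ this forces $a_m > 0$, i.e. $\coeff^{-}(\mu) > 0$.

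There is no real obstacle; the only thing to notice is that the extra term $-c_{-1} x^{-1}$ in $\hat\mu$ is precisely what forces the lower bound on $\deg^{-}(\mu)$, after which the sign of the leading coefficient follows from one-sided Lipschitz continuity of $\mu$ alone.
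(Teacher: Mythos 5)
Your proof is correct and follows essentially the same route as the paper: a contradiction argument showing that if $\deg^-(\mu) > -1$ then $\hat\mu'(x) \to +\infty$ as $x \to 0^+$ (violating the one-sided Lipschitz bound on $\hat\mu$), followed by the sign analysis of the leading term of $\mu'$ near the origin to get $\coeff^-(\mu) > 0$. The only difference is one of detail: you make explicit two steps the paper leaves implicit, namely the equivalence between the one-sided Lipschitz condition and the pointwise bound on the derivative for $C^1$ scalar functions, and the deduction of the second assertion, which the paper dismisses as following ``directly'' from the first assertion and Assumption \ref{2024explicit-ass:mu_one_sided_Lipschitz}.
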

\begin{proof}
    For the first assertion, suppose that $\deg^-(\mu) > -1$.
    Observing that $\hat\mu$ is also an FLP, it can be derived by definition that
    \begin{align*}
        \deg^-(\hat\mu) &= -1,
        \quad 
        \coeff^-(\hat\mu) = - c_{-1},\\
        \deg^-(\hat\mu') &= -2,
        \quad 
        \coeff^-(\hat\mu') =  c_{-1}.
    \end{align*}
    Thus
    $$\lim_{x \rightarrow 0^+}\hat\mu'(x) = + \infty,$$
    violating \eqref{2024explicit-eq:hat_mu_one_sided_Lipschitz}.
    The second assertion follows directly from the first assertion and Assumption \ref{2024explicit-ass:mu_one_sided_Lipschitz}.
\end{proof}
\begin{remark}\label{2024explicit-remark_c-1}
We would like to illustrate how one can choose the value of $c_{-1}$ to guarantee Assumption \ref{2024explicit-ass:hat_mu_one_sided_Lipschitz}.
    \begin{itemize}
        \item in the case $\deg^-(\mu) < -1$, we have $\deg^-(\hat\mu) = \deg^-(\mu) < -1$ and $\coeff^-(\hat\mu) = \coeff^-(\mu)$, which implies that $\hat\mu$ satisfies the same one-sided Lipschitz condition as $\mu$ for any constant $c_{-1}$;
        
        \item in the case $\deg^-(\mu) = -1$, set $c_0 = \coeff^-(\mu)$ and
        $$\mu_0(x) := \mu(x) - c_0 x^{-1}.$$
        If $\mu_0$ satisfies the one-sided Lipschitz condition, then $c_{-1} = c_0 = \coeff^-(\mu)$ is the optimal choice. 
        Otherwise, one may choose any $c_{-1} < c_0 = \coeff^-(\mu)$.
        This ensures $\deg^-(\hat\mu) = \deg^-(\mu) = -1$ and $\coeff^-(\hat\mu) = \coeff^-(\mu) - c_{-1} > 0$, which implies that $\hat\mu$ satisfies the same one-sided Lipschitz condition as $\mu$.
    \end{itemize}
\end{remark}
The correction function $\P$ is designed to address the challenges posed by non-Lipschitz (possibly singular) drift $\mu$, satisfying the following assumptions.
\begin{assumption}
\label{2024explicit-ass:Ph_properties}
For the mapping $\P \colon (0,+\infty)\rightarrow (0,+\infty)$, there exists a positive constant $C$ independent of the time step size $h$ such that the following conditions hold:
\begin{enumerate}
    
    \item (contractivity) it holds that
    \begin{align}
    \label{2024explicit-eq:Ph_contractivity}
        \big\vert \P(x) - \P(y) \big\vert
        & \leq
        \vert x - y \vert,
        \quad
        \forall
        x,y >0.
    \end{align}

    \item (consistency‌) there exists some constants $m_1,m_2 \geq 0$ such that
    \begin{align}
    \label{2024explicit-eq:correction_error}
        \big\vert \P(x) - x \big\vert
        & \leq
        C h^2 (1 + \vert x \vert^{-m_1} + \vert x \vert ^{m_2}),
        \quad
        \forall x > 0.
    \end{align}
\end{enumerate}

\end{assumption}

\begin{assumption}
\label{2024explicit-ass:hat_mu(Ph)_bound}
    For any $x >0$, there exists a positive constant $C$ independent of the time step size $h$ such that 
    \begin{equation}
        \big\vert \hat\mu(\P(x)) \big\vert \leq C h^{-\frac12} \vee C \vert x \vert,
        \quad
        \big\vert \hat\mu(\P(x)) - \hat\mu(\P(y)) \big\vert
        \leq
        C h^{-\frac12} \vert x - y \vert.
    \end{equation}
\end{assumption}

We now analyze fundamental characteristics of the proposed numerical scheme, starting with its preservation of positivity.
\begin{lemma}
\label{2024explicit-lem:positivity_preserving}
Given any initial value $Y_0 = X_0 > 0$ and positive constant $c_{-1}>0$, 
the scheme \eqref{2024explicit-eq:scheme} admits unique, positive numerical solutions 
$\{ Y_n \}_{ n \in \N} $ for any step-size $h = \tfrac{T}{N} \in (0,1]$. 
\end{lemma}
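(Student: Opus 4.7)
The plan is to reduce the one-step equation defining $Y_{n+1}$ to a scalar quadratic equation, and then invoke elementary properties of its roots. Proceeding by induction on $n$, assume $Y_n>0$ is already given (the base case being $Y_0 = X_0>0$ by hypothesis). Since $\P$ maps into $(0,+\infty)$, the quantity $\P(Y_n)$ is a well-defined positive number, and hence $\hat\mu(\P(Y_n))$ is also well-defined; in particular, all of the explicit terms on the right-hand side of \eqref{2024explicit-eq:scheme} are $\mathcal{F}_{t_{n+1}}$-measurable and finite almost surely.

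Next, I would set
\begin{equation*}
    A_n
    :=
    \P(Y_n)
    +
    \hat\mu(\P(Y_n))\,h
    +
    \sigma \Delta W_n,
\end{equation*}
so that the scheme \eqref{2024explicit-eq:scheme} becomes $Y_{n+1} = A_n + c_{-1} h Y_{n+1}^{-1}$. Any candidate solution must be nonzero, and multiplying through by $Y_{n+1}$ turns this fixed-point relation into the equivalent quadratic equation
\begin{equation*}
    Y_{n+1}^2 - A_n Y_{n+1} - c_{-1} h = 0.
\end{equation*}
Since $c_{-1}>0$ and $h>0$, the discriminant $A_n^2 + 4 c_{-1} h$ is strictly positive, so the two real roots are
\begin{equation*}
    Y_{n+1}^{\pm}
    =
    \tfrac{1}{2}\Bigl( A_n \pm \sqrt{A_n^2 + 4 c_{-1} h}\,\Bigr).
\end{equation*}
Because $\sqrt{A_n^2+4c_{-1}h} > |A_n|$, exactly one of these roots is positive and the other is negative. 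Choosing the positive root $Y_{n+1} = Y_{n+1}^{+}$ therefore yields the unique element of $(0,+\infty)$ satisfying the scheme.

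Since the formula is an explicit, measurable function of $Y_n$ and $\Delta W_n$, the resulting $Y_{n+1}$ is $\mathcal{F}_{t_{n+1}}$-measurable and positive almost surely, completing the induction. I do not expect a genuine obstacle here: the key observation is simply that the implicit term $c_{-1} h Y_{n+1}^{-1}$ contributes a positive constant $-c_{-1}h$ to the constant coefficient of the quadratic, which guarantees that one and only one root lies in $(0,+\infty)$. Notably, no restriction on $h$ or on the sign of $A_n$ is needed, which is what yields the unconditional positivity-preservation.
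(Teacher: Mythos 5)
Your proof is correct, and it takes a slightly different route from the paper's. The paper reduces the one-step equation to the same scalar problem you do, but then argues abstractly: it defines $G(x) = x - c_{-1} h x^{-1}$ and shows $G$ is a strictly increasing bijection from $(0,+\infty)$ onto $\R$ (via $G' = 1 + c_{-1}hx^{-2} > 0$ together with $\lim_{x\to 0^+}G(x) = -\infty$ and $\lim_{x\to+\infty}G(x) = +\infty$), so that $G(Y_{n+1}) = A_n$ has exactly one positive solution for any right-hand side. You instead clear the denominator and solve the quadratic $Y_{n+1}^2 - A_n Y_{n+1} - c_{-1}h = 0$ explicitly, noting the constant term $-c_{-1}h < 0$ forces one positive and one negative root. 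Both arguments are elementary and complete; what yours buys is the closed-form update
\begin{equation*}
Y_{n+1} = \tfrac{1}{2}\Bigl( A_n + \sqrt{A_n^2 + 4 c_{-1} h}\,\Bigr),
\end{equation*}
which is precisely what the paper's introduction appeals to when calling the scheme explicit (the paper's proof of the lemma itself never writes this formula down), and it makes measurability of $Y_{n+1}$ immediate. What the paper's monotonicity argument buys is robustness: it would carry over verbatim if the implicit term were replaced by any other decreasing singular term (e.g.\ $c_{-2}Y_{n+1}^{-2}$), where no explicit root formula is available. One cosmetic remark: your induction and measurability bookkeeping is harmless but not strictly needed for the statement as posed, which only asserts existence and uniqueness of positive solutions given the previous iterate.
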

\begin{proof}
It suffices to show that for any constant $c \in \mathbb{R}$, the equation
$$G(x) := x - c_{-1} h x^{-1} = c$$
admits a unique solution in $(0,+\infty)$.
This follows from the following observations:
\begin{itemize}
    \item 
    $\lim\limits_{x \rightarrow 0+} G(x) = - \infty,
    \quad
    \lim\limits_{x \rightarrow +\infty} G(x) = + \infty$;
    \item 
    $G'(x) = 1 + c_{-1} h x^{-2} > 0$, indicating that $G$ is monotonically increasing on $(0, +\infty)$.
\end{itemize}
Hence, a unique solution exists, and the proof is completed.
\end{proof}
The Lamperti inverse transformation inherently requires a moment bound for the numerical solution.
The bound is established in the following lemma.
\begin{lemma}\label{2024explicit-lem:Y_integrability}
    Let Assumptions \ref{2024explicit-ass:hat_mu_one_sided_Lipschitz}, \ref{2024explicit-ass:Ph_properties} and \ref{2024explicit-ass:hat_mu(Ph)_bound} stand.
    For any $p>0$, it holds that
    \begin{equation}
        \E\bigg[ \sup_{n=0,...,M} \vert Y_n \vert^p \bigg] < +\infty.
    \end{equation}
\end{lemma}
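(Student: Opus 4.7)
The plan is to multiply the scheme \eqref{2024explicit-eq:scheme} by the strictly positive quantity $Y_{n+1}$, which converts it into the quadratic equation
\[
Y_{n+1}^2 - R_n\, Y_{n+1} - c_{-1}h = 0,\qquad R_n := \P(Y_n) + \hat\mu(\P(Y_n))\,h + \sigma\,\Delta W_n.
\]
By Lemma \ref{2024explicit-lem:positivity_preserving}, $Y_{n+1}$ is the unique positive root, which gives the explicit representation $Y_{n+1}=\tfrac12\bigl(R_n+\sqrt{R_n^2+4c_{-1}h}\bigr)$. Using the elementary bound $\sqrt{R_n^2+4c_{-1}h}\le|R_n|+2\sqrt{c_{-1}h}$, I obtain the pathwise control $Y_{n+1}\le|R_n|+\sqrt{c_{-1}h}$, which reduces the problem to estimating $R_n$.

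\textbf{Linear control of $R_n$ and discrete Gronwall.} Next I bound $R_n$ linearly in $Y_n$. Since $\P$ maps $(0,+\infty)$ into itself and is 1-Lipschitz (Assumption \ref{2024explicit-ass:Ph_properties}(1)), for a fixed reference point $x_\ast>0$ one has $\P(Y_n)\le\P(x_\ast)+|Y_n-x_\ast|\le K_0+Y_n$, with $K_0$ independent of $h$. Assumption \ref{2024explicit-ass:hat_mu(Ph)_bound} then yields $h\,|\hat\mu(\P(Y_n))|\le C h^{1/2}+C h\,Y_n$. Combining these with the Brownian term I arrive at the linear recursion
\[
Y_{n+1}\le(1+Ch)\,Y_n + K_1 + |\sigma|\,|\Delta W_n|.
\]
A standard discrete Gronwall iteration then yields
\[
\sup_{0\le n\le M} Y_n \le K_2(h)\,(1+Y_0) + K_3 \sum_{k=0}^{M-1}|\Delta W_k|,
\]
where $K_2(h)$ is a deterministic constant possibly depending on $h$ and $K_3$ is independent of $h$.

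\textbf{$p$-th moment and the main obstacle.} Finally, raising to the $p$-th power via the inequality $(a+b)^p\le 2^{\max(p-1,0)}(a^p+b^p)$ together with either Hölder's inequality (for $p\ge 1$) or subadditivity of $x\mapsto x^p$ (for $p\in(0,1)$), and using the Gaussian moment $\E[|\Delta W_k|^p]=C_p\,h^{p/2}$, one estimates $\E\bigl[\bigl(\sum_{k=0}^{M-1}|\Delta W_k|\bigr)^p\bigr]\le C_p\,M^{\max(p,1)}h^{p/2}<+\infty$ for every fixed $h\in(0,1]$. This delivers $\E\bigl[\sup_{n=0,\dots,M}|Y_n|^p\bigr]<+\infty$, establishing the lemma. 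The main conceptual obstacle I anticipate is that the contractivity-based bound $\P(Y_n)\le K_0+Y_n$ carries an $h$-independent additive constant, which prevents the Gronwall estimate above from being uniform in $h$; this is harmless for the present finiteness statement, but sharpening it to an $h$-uniform moment bound (as will likely be needed for the order-one convergence analysis) would require additionally invoking the consistency bound \eqref{2024explicit-eq:correction_error} in tandem with the inverse-moment bounds on $Y_n$ from Lemma \ref{2024explicit-lem:inverse_moment_bounds_Y}.
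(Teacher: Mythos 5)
Your explicit-root argument (solving the quadratic for $Y_{n+1}$ and bounding $Y_{n+1}\le |R_n|+\sqrt{c_{-1}h}$) is correct as far as it goes, and it does prove the literal assertion that $\E\big[\sup_{n}|Y_n|^p\big]$ is finite for each \emph{fixed} step size $h$. But that is not what the paper needs from this lemma, nor what its proof establishes. The lemma is invoked in Section \ref{2024explicit-sec:examples} (e.g.\ in the CIR and CEV transform-back steps) to bound factors such as $\big(\E\big[\sup_n |X_{t_n}+Y_n|^{2p(1+\epsilon)/\epsilon}\big]\big)^{\epsilon/(1+\epsilon)}$ by a constant \emph{independent of $h$}, so that the final error estimate $\le C h^{2p}$ holds with an $h$-independent $C$. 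Your bound cannot serve this purpose: the per-step additive constant $K_1$ accumulates to $MK_1\sim h^{-1}$ in the Gronwall iteration, and replacing the Gaussian increments by their absolute values gives $\E\big[\big(\sum_{k=0}^{M-1}|\Delta W_k|\big)^p\big]\le C_p M^{p} h^{p/2}= C_p T^p h^{-p/2}$ for $p \ge 1$, so your final estimate degrades like $h^{-p}$ as $h\to 0$. You noticed this yourself but treated it as a side remark; in the context of this paper the $h$-uniformity is the whole point of the lemma, so this is a genuine gap rather than a cosmetic one.

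The paper's proof is built precisely to avoid these two losses. First, it centers the recursion at $Y_0$ (adding $-Y_0+c_{-1}hY_0^{-1}$ to both sides) and uses positivity to get $\vert Y_{n+1}-Y_0-c_{-1}h(Y_{n+1}^{-1}-Y_0^{-1})\vert\ge \vert Y_{n+1}-Y_0\vert$, which eliminates the implicit term without ever estimating it, so no inverse moments of $Y_n$ enter at all; your proposed repair via Lemma \ref{2024explicit-lem:inverse_moment_bounds_Y} is therefore not the right ingredient (and that lemma sits downstream of the convergence theorem anyway). Second, instead of the crude growth bound from Assumption \ref{2024explicit-ass:hat_mu(Ph)_bound}, it exploits the one-sided Lipschitz property of $\hat\mu$ (Assumption \ref{2024explicit-ass:hat_mu_one_sided_Lipschitz}) after subtracting $\hat\mu(\P(Y_0))$, together with the consistency bound \eqref{2024explicit-eq:correction_error} evaluated only at the deterministic point $Y_0=X_0$, so every per-step remainder is $O(h)$ rather than $O(1)$. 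Third, the noise is kept in the form of martingale increments $\langle \P(Y_i)-\P(Y_0),\sigma\Delta W_i\rangle$ and handled with the discrete Burkholder--Davis--Gundy inequality (Theorem \ref{2024explicit-thm:discrete_BDG}) and an induction on moments, rather than being dominated pathwise by $\sum_k|\Delta W_k|$. These three devices are exactly what upgrade your fixed-$h$ finiteness to the $h$-uniform moment bound the order-one convergence analysis requires; to close the gap you would have to rework your argument along these lines.
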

\begin{proof}
    To begin with, adding $- Y_0 + c_{-1} h Y_0^{-1}$ to both sides of \eqref{2024explicit-eq:scheme} 
    yields 
    \begin{equation}
    \begin{split}
        Y_{n+1} - Y_0
        -
        c_{-1}h \big( Y_{n+1}^{-1} - Y_0^{-1} \big)
        & =
        \P(Y_n) - \P(Y_0)
        +
        \P(Y_0) - Y_0
        +
        c_{-1}h Y_0^{-1}\\
        & \quad +
        \hat\mu(\P(Y_n)) h 
        +
        \sigma \Delta W_n.
    \end{split}
    \end{equation}
    Squaring both sides leads to
    \begin{equation}\label{2024explicit-eq:Y_integrability_squaring_both_sides}
    \begin{split}
        & \Big\vert Y_{n+1} - Y_0 - c_{-1}h \big( Y_{n+1}^{-1} - Y_0^{-1} \big) \Big\vert^2\\
        & =
        \big\vert \P(Y_n) - \P(Y_0) \big\vert^2
        +
        \big\vert \P(Y_0) - Y_0 + c_{-1} h Y_0^{-1} \big\vert^2
        +
        h^2 \big\vert \hat\mu(\P(Y_n)) \big\vert^2  
        +
        \sigma^2 \vert \Delta W_n \vert^2\\
        & \quad
        +
        2 \big\langle \P(Y_n) - \P(Y_0) , \P(Y_0) - Y_0 + c_{-1} h Y_0^{-1} \big\rangle
        +
        2 h \big\langle \P(Y_n) - \P(Y_0) , \hat\mu(\P(Y_n))\big\rangle\\
        & \quad
        +
        2 \big\langle \P(Y_n) - \P(Y_0) , \sigma \Delta W_n \big\rangle
        +
        2 h \big\langle \P(Y_0) - Y_0 + c_{-1} h Y_0^{-1} , \hat\mu(\P(Y_n)) \big\rangle\\
        & \quad
        +
        2 \big\langle \P(Y_0) - Y_0 + c_{-1} h Y_0^{-1} , \sigma \Delta W_n \big\rangle
        +
        2 h \big\langle \hat\mu(\P(Y_n)), \sigma \Delta W_n \big\rangle
    \end{split}
    \end{equation}
    Lemma \ref{2024explicit-lem:positivity_preserving} along with straightforward calculations gives
    \begin{equation*}
    \begin{split}
        & \Big\vert Y_{n+1} - Y_0 - c_{-1}h \big( Y_{n+1}^{-1} - Y_0^{-1} \big) \Big\vert^2\\
        & \qquad  =
        \Big\vert Y_{n+1} - Y_0 \Big\vert^2
        +
        2 c_{-1}h\frac{\big\vert Y_{n+1} - Y_0 \big\vert^2}{Y_0 Y_{n+1}}
        +
        c_{-1}^2 h^2 \big\vert Y_{n+1}^{-1} - Y_0^{-1} \big\vert^2\\
        & \qquad  \geq
        \big\vert Y_{n+1} - Y_0 \big\vert^2.
    \end{split}
    \end{equation*}
    For the six cross terms in \eqref{2024explicit-eq:Y_integrability_squaring_both_sides}:
    \begin{enumerate}[(1)]
        \item By the Young inequality,
        \begin{equation*}
        \begin{split}
        2 & \big\langle \P(Y_n) - \P(Y_0) , \P(Y_0) - Y_0 + c_{-1} h Y_0^{-1} \big\rangle\\
        & \qquad \leq 
        h \big\vert \P(Y_n) - \P(Y_0) \big\vert^2
        +
        \tfrac{1}{h} \big\vert \P(Y_0) - Y_0 + c_{-1} h Y_0^{-1} \big\vert^2.
        \end{split}
        \end{equation*}
        Note that $Y_0 = X_0 > 0$.
        Inequality \eqref{2024explicit-eq:correction_error} infers that for some $m_1, m_2>0$,
        \begin{equation}\label{2024explicit-eq:PhY-Y_bound}
        \begin{split}
        \big\vert \P(Y_0) - Y_0 \big\vert
        & =
        \big\vert \P(X_0) - X_0 \big\vert
        \leq 
        C h^2
        \big( 1 + \vert X_0 \vert^{-m_1} +  \vert X_0 \vert^{m_2}\big)
        \leq
        C h^2.
        \end{split}
        \end{equation}
        Hence, 
        \begin{equation*}
        \begin{split}
        2 & \big\langle \P(Y_n) - \P(Y_0) , \P(Y_0) - Y_0 + c_{-1} h Y_0^{-1} \big\rangle
        \leq 
        h \big\vert \P(Y_n) - \P(Y_0) \big\vert^2
        +
        C h.
        \end{split}
        \end{equation*}
        \item Assumption \ref{2024explicit-ass:hat_mu_one_sided_Lipschitz} together with the Young inequality infers that
        \begin{equation*}
    \begin{split}
        2 & h \big\langle \P(Y_n) - \P(Y_0) , \hat\mu(\P(Y_n))\big\rangle\\
        & =
        2 h \big\langle \P(Y_n) - \P(Y_0) , \hat\mu(\P(Y_n)) - \hat\mu(\P(Y_0))\big\rangle\\
        & \quad+
        2 h \big\langle \P(Y_n) - \P(Y_0) ,  \hat\mu(\P(Y_0)) \big\rangle\\
        & \leq
        2 L h \big\vert \P(Y_n) - \P(Y_0) \big\vert^2
        +
        h \big\vert \P(Y_n) - \P(Y_0) \big\vert^2
        +
        h \big\vert \hat\mu(\P(Y_0)) \big\vert^2\\
        & \leq
        (2 L h + h) \big\vert \P(Y_n) - \P(Y_0) \big\vert^2
        +
        C h.
    \end{split}
    \end{equation*}

        \item The third term is left unchanged.

        \item By the Young inequality, Assumption \ref{2024explicit-ass:hat_mu(Ph)_bound} and \eqref{2024explicit-eq:PhY-Y_bound}, one gets
        \begin{equation*}
        \begin{split}
        2 & h \big\langle \P(Y_0) - Y_0 + c_{-1} h Y_0^{-1} , \hat\mu(\P(Y_n)) \big\rangle\\
        & \qquad \leq
        \big\vert\P(Y_0) - Y_0 + c_{-1} h Y_0^{-1} \big\vert^2
        +
        h^2 \big\vert \hat\mu(\P(Y_n)) \big\vert^2\\
        & \qquad \leq
        C h.
        \end{split}
        \end{equation*}
        
        \item The Young inequality together with \eqref{2024explicit-eq:PhY-Y_bound} yields
        \begin{equation*}
        \begin{split} 
        2 &\big\langle \P(Y_0) - Y_0 + c_{-1} h Y_0^{-1} , \sigma \Delta W_n \big\rangle\\
        & \qquad \leq
        \big\vert\P(Y_0) - Y_0 + c_{-1} h Y_0^{-1} \big\vert^2
        +
        \sigma^2 \vert \Delta W_n \vert^2\\
        & \qquad \leq C h +\sigma^2 \vert \Delta W_n \vert^2 .
        \end{split}
        \end{equation*}

        \item By the Young inequality, Assumption \ref{2024explicit-ass:hat_mu(Ph)_bound} and \eqref{2024explicit-eq:Wn_moment_bound},
        \begin{equation*}
        \begin{split}
        2 h \big\langle \hat\mu(\P(Y_n)), \sigma \Delta W_n \big\rangle
        \leq
        h^2 \big\vert \hat\mu(\P(Y_n)) \big\vert^2
        +
        \sigma^2 \vert \Delta W_n \vert^2
        \leq
        C h + \sigma^2 \vert \Delta W_n \vert^2.
        \end{split}
        \end{equation*}
    \end{enumerate}
    Substituting the above results into \eqref{2024explicit-eq:Y_integrability_squaring_both_sides} and applying \eqref{2024explicit-eq:Ph_contractivity} leads to
    \begin{equation}\label{2024explicit-eq:en_cross_terms_estimated}
    \begin{split}
        \big\vert & Y_{n+1} - Y_0 \big\vert^2\\
        & \leq
        (1 + 2 h + 2 L h)\big\vert \P(Y_n) - \P(Y_0) \big\vert^2
        +
        2 \big\langle \P(Y_n) - \P(Y_0) , \sigma \Delta W_n \big\rangle\\
        & \quad
        +
        3 \sigma^2 \vert \Delta W_n \vert^2
        +
        C h\\
        & \leq
        (1 + 2 h + 2 L h)\big\vert Y_n - Y_0 \big\vert^2
        +
        2 \big\langle \P(Y_n) - \P(Y_0) , \sigma \Delta W_n \big\rangle
        +
        3 \sigma^2 \vert \Delta W_n \vert^2
        +
        C h.
    \end{split}
    \end{equation}
    For simplicity, denote
    $$S_n : = Y_n - Y_0.$$
    Inequality \eqref{2024explicit-eq:en_cross_terms_estimated} now reads as
    \begin{equation*}
        \vert S_{n+1} \vert^2
        \leq
        (1 + 2 h + 2 L h) \vert S_n \vert^2
        +
        2 \big\langle \P(Y_n) - \P(Y_0) , \sigma \Delta W_n \big\rangle
        +
        3 \sigma^2 \vert \Delta W_n \vert^2
        +
        C h.
    \end{equation*}
    By iteration, one obtains
    \begin{equation}\label{2024explicit-eq:Y_integrability_Sn_iteration}
    \begin{split}
        \vert S_{n+1} \vert^2
        & \leq
        \sum_{i=0}^n 
        2 (1 + 2 h + 2 L h)^{n-i} 
        \big\langle \P(Y_i) - \P(Y_0) , \sigma \Delta W_i \big\rangle\\
        & \quad +
        \sum_{i=0}^n 
        (1 + 2 h + 2 L h)^{n-i} 3 \sigma^2 \vert \Delta W_i \vert^2
        +
        \sum_{i=0}^n 
        (1 + 2 h + 2 L h)^{n-i} 
        C h.
    \end{split}
    \end{equation}
    Note that
        \begin{equation}\label{2024explicit-eq:Wn_moment_bound}
        \E\big[\vert \Delta W_n \vert^{2q}\big] = h^q,
        \quad
        \forall q \geq 1.
        \end{equation}
    Since $\P(Y_n) - \P(Y_0)$ and $\Delta W_n$ are independent random variables for any $n=0,...,M-1$, it holds that
    \begin{equation}\label{2024explicit-eq:Y_integrability_PYn_DeltaWn_independent}
    \begin{split}
        \E \Big[ \big\langle \P(Y_n) - \P(Y_0) , \sigma \Delta W_n \big\rangle \Big]
        & = 0.
    \end{split}
    \end{equation}
    Therefore, taking expectation and supremum on both sides of \eqref{2024explicit-eq:Y_integrability_Sn_iteration} results in 
    \begin{equation}\label{2024explicit-eq:Y_integrability_Sn^2_bound}
    \begin{aligned}
        \sup_{n=0,...,M}
        \E \Big[ \big\vert S_n \big\vert^2\Big] 
        \leq
        \sum_{i=0}^{M-1} (1 + 2 h + 2 L h)^{M-1-i} C h
        \leq
        \text{e}^{(2+2L)T}CT
        < 
        +\infty.
    \end{aligned}
    \end{equation}
    Now take supremum and raise both sides of \eqref{2024explicit-eq:Y_integrability_Sn_iteration} to the power of $2q \, (q \geq 1)$.
    The Jensen inequality yields
    \begin{equation}
    \begin{split}
        \E \Big[
            \sup_{n=0,...,M}
          & \vert S_{n} \vert^{4q}
        \Big]
        \leq
        2^{2q-1} 
        \E\Bigg[ 
            \sup_{n=0,...,M-1} 
            \bigg\vert 
            \sum_{i=0}^n 
            (1 + 2 h + 2 L h)^{n-i} 
            3 \sigma^2 \vert \Delta W_i \vert^2
            \bigg\vert^{2q} 
        \Bigg]\\
        & + 
        2^{2q-1} 
        \E\Bigg[ 
            \sup_{n=0,...,M-1} 
            \bigg\vert 
            \sum_{i=0}^n 
            (1 + 2 h + 2 L h)^{n-i} 
            C h
            \bigg\vert^{2q} 
        \Bigg]
        \\
        & +
        2^{2q-1} 
        \E \Bigg[
            \sup_{n=0,...,M-1} 
            \bigg\vert 
            \sum_{i=0}^n 
            2 (1 + 2 h + 2 L h)^{n-i} 
            \big\langle \P(Y_i) - \P(Y_0) , \sigma \Delta W_i \big\rangle
            \bigg\vert^{2q} 
        \Bigg].
    \end{split}
    \end{equation}
    Note that $\big\{\sum_{i=0}^n 
        2 (1 + 2 h + 2 L h)^{-i} 
        \big\langle \P(Y_i) - \P(Y_0) , \sigma \Delta W_i \big\rangle \big\}_{n=0}^{M-1}$
    is a square-integrable discrete-time martingale.
    Applying the Jensen inequality again on the first term and the Burkholder-Davis-Gundy inequality (Theorem \ref{2024explicit-thm:discrete_BDG}) on the second term leads to
    \begin{equation}\label{2024explicit-eq:supSn^4q_BDG}
    \begin{split}
        \E \Big[ 
            \sup_{n=0,...,M} 
          &  \vert S_{n} \vert^{4q}
        \Big]
        \leq
        2^{2q-1} M^{2q-1} 
        \text{e}^{2 (2 + 2 L)T q}
        \sum_{i=0}^{M-1}
        \big\vert 
        C h
        \big\vert^{2q}\\
        &
        +
        2^{4q-1} 
        \text{e}^{2 (2 + 2 L)T q}
        \E\Bigg[
        \bigg(
        \sum_{i=0}^{M-1}
        \big\vert
        (1 + 2 h + 2 L h)^{-i} 
        \big\langle \P(Y_i) - \P(Y_0) , \sigma \Delta W_i \big\rangle
        \big\vert^{2}
        \bigg)^q
        \Bigg].
    \end{split}
    \end{equation}
    Another Jensen inequality applied to the second term of \eqref{2024explicit-eq:supSn^4q_BDG} results in 
    \begin{equation}\label{2024explicit-eq:supSn^4q_BDGandJensen}
    \begin{split}
        \E \Big[
            \sup_{n=0,...,M}
            \vert S_{n} \vert^{4q}
        \Big]
        & \leq
        2^{2q-1} T^{2q} 
        \text{e}^{2(2 + 2 L)T q}
        C^{2q}\\
        & \quad
        +
        2^{4q-1} M^{q-1}
        \text{e}^{2 (2 + 2 L)T q}
        \sum_{i=0}^{M-1} 
        \E\Big[
        \big\vert
        \big\langle \P(Y_i) - \P(Y_0) , \sigma \Delta W_i \big\rangle
        \big\vert^{2q}
        \Big].
    \end{split}
    \end{equation}
    Recalling that $\P(Y_i) - \P(Y_0)$ and $\Delta W_i$ are independent random variables for any $i=0,...,M-1$, one can deduce from \eqref{2024explicit-eq:Ph_contractivity} and \eqref{2024explicit-eq:Wn_moment_bound} that
    \begin{equation}
    \label{2024explicit-eq:Y_integrability_Sn_iteration_PhY,Delta_W_n}
    \begin{split}
        \E \Big[ \big\vert 
        \big\langle \P(Y_i) - \P(Y_0) , \sigma \Delta W_i \big\rangle 
        \big\vert^{2q}\Big]
        & =
        \sigma^{2q} h^q \E \Big[ \big\vert \P(Y_i) - \P(Y_0) \big\vert^{2q}\Big]\\
        & \leq
        \sigma^{2q} h^q \E \Big[ \big\vert S_i \big\vert^{2q}\Big],
        \quad
        \forall q \geq 1.
    \end{split}
    \end{equation}
    Substituting \eqref{2024explicit-eq:Y_integrability_Sn_iteration_PhY,Delta_W_n} in to \eqref{2024explicit-eq:supSn^4q_BDGandJensen} leads to
    \begin{equation}
    \begin{split}
        \E \Big[
            \sup_{n=0,...,M}
            \vert S_{n} \vert^{4q}
        \Big]
        & \leq
        2^{2q-1} T^{2q} 
        \text{e}^{2(2 + 2 L)T q}
        C^{2q}\\
        & \quad +
        2^{4q-1} M^{q-1} \text{e}^{2 (2 + 2 L)T q}
        \sum_{i=0}^{M-1}
        \sigma^{2q} h^{q}
        \E \Big[\vert S_i \vert^{2q}\Big]\\
        & \leq
        C \bigg(
        1 + h \sum_{i=0}^{M-1}
        \E \Big[\vert S_i \vert^{2q}\Big]
        \bigg).
    \end{split}
    \end{equation}
    Note that
    $$h \sum\limits_{i=0}^{M-1}
        \E \Big[\vert S_i \vert^{2q}\Big] 
        \leq
        T
        \cdot
        \sup\limits_{n=0,...,M} \E \Big[ \big\vert S_n \big\vert^{2q}\Big].$$ 
    Bearing \eqref{2024explicit-eq:Y_integrability_Sn^2_bound} in mind, it can be deduced by setting $q=1$ that
    \begin{equation}
        \E \Big[
            \sup_{n=0,...,M}
            \vert S_{n} \vert^{4}
        \Big]
        \leq
        C \bigg(
        1 + \sup_{n=0,...,M} \E \Big[ \big\vert S_n \big\vert^{2}\Big]
        \bigg)
        \leq C.
    \end{equation}
    Moreover, since
    $$h \sum_{i=0}^{M-1}
        \E \Big[\vert S_i \vert^{2q}\Big]
        \leq
        T \cdot \E \Big[\sup_{n=0,...,M}\big\vert S_n \big\vert^{2q}\Big],$$
    it holds for $q>1$, $q\in \N$ that
    \begin{equation}
    \begin{split}
        \E \Big[
            \sup_{n=0,...,M}
            \vert S_{n} \vert^{4q}
        \Big]
        & \leq
        C \bigg(
        1 +  \E \Big[ \sup_{n=0,...,M} \big\vert S_n \big\vert^{2q}\Big]
        \bigg).
    \end{split}
    \end{equation}
    The proof is thus completed by an induction argument along with the Lyapunov inequality.

\end{proof}

\section{Error bounds for discretizations of the transformed SDEs}
\label{2024explicit-sec:error_analysis}

In this section, we attempt to establish the convergence result for the proposed numerical scheme.
To do so, rewrite the transformed SDE \eqref{2024explicit-eq:transformed_SDE} in a discrete form as
\begin{equation}
\begin{aligned}
\label{2024explicit-eq:SDE_rewrite}
    X_{t_{n+1}}
    =
    \P (X_{t_n}) 
    +
    c_{-1} h X_{t_{n+1}}^{-1}
    +
    \hat \mu (\P(X_{t_n})) h
    +
    \sigma \Delta W_n
    +
    R_{n+1},
\end{aligned}
\end{equation}
where
\begin{equation}\label{2024explicit-eq:Rn_definition}
\begin{aligned}
    R_{n+1}
    =
    X_{t_n} - \P (X_{t_n})
    +
    \int_{t_n}^{t_{n+1}}
        \mu(X_s) 
    \d s
    -
    c_{-1} h X_{t_{n+1}}^{-1}
    -
    \hat \mu(\P(X_{t_n})) h.
\end{aligned}
\end{equation}
In the subsequent analysis, we decompose $R_{n+1}$ into two distinct components:
\begin{equation*}
    R_{n+1} = R_{n+1}^{(1)} + R_{n+1}^{(2)}.
\end{equation*}
The component $R_{n+1}^{(2)}$ includes terms involving stochastic integrals and satisfies
$$
\E \Big[  R_{n+1}^{(2)} \big\vert \mathcal{F}_{t_n}\Big]
= 0,
\quad
\forall 
n = 0,1,..., M-1,
$$
allowing it to be addressed effectively with the Burkholder-Davis-Gundy inequality.
The remaining terms are grouped into $R_{n+1}^{(1)}$, which can be handled directly using simpler tools such as the Jensen inequality, the Young inequality and the H\"older inequality.
The following lemma provides rough moment bounds for $R_{n+1}^{(1)}$ and $R_{n+1}^{(2)}$.
\begin{lemma}\label{2024explicit-lem:Rn_bounds}
Let Assumption \ref{2024explicit-ass:solution_existence_and_uniqueness},
\ref{2024explicit-ass:mu_one_sided_Lipschitz},
\ref{2024explicit-ass:inverse_moment_boundedness_of_analytical_solution},
\ref{2024explicit-ass:hat_mu_one_sided_Lipschitz},
\ref{2024explicit-ass:Ph_properties}
and
\ref{2024explicit-ass:hat_mu(Ph)_bound}
stand.
For any $0< p \leq - \tfrac{p^*}{2 \deg^-(\mu)} \wedge \tfrac{p^*}{2m_1}$, where $p^*>0$ comes from Assumption \ref{2024explicit-ass:inverse_moment_boundedness_of_analytical_solution} and $m_1$ from Assumption \ref{2024explicit-ass:Ph_properties}, it holds that
\begin{equation}
    \sup_{n=1,...,M}
    \E\Big[ \big\vert R_{n}^{(1)} \big\vert^{2p}\Big]
    < +\infty,
    \quad
    \sup_{n=1,...,M}
    \E\Big[ \big\vert R_{n}^{(2)} \big\vert^{2p}\Big]
    < +\infty.
\end{equation}
\end{lemma}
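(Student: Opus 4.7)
The plan is to make the decomposition $R_{n+1} = R_{n+1}^{(1)} + R_{n+1}^{(2)}$ explicit, push every genuine Itô integral into $R_{n+1}^{(2)}$, and then bound each piece in $L^{2p}$ by reducing the task to moments of fractional Laurent polynomials of the exact solution $X$, which are controlled by Lemma \ref{2024explicit-lem:X_integrability} (positive exponents) and by Assumption \ref{2024explicit-ass:inverse_moment_boundedness_of_analytical_solution} (negative exponents).

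Concretely, I would first add and subtract $\mu(X_{t_n})h$ inside \eqref{2024explicit-eq:Rn_definition} and use $\mu = c_{-1} x^{-1} + \hat\mu$ to rewrite
\begin{equation*}
R_{n+1}
=
\bigl(X_{t_n} - \P(X_{t_n})\bigr)
+
\int_{t_n}^{t_{n+1}}\!\bigl[\mu(X_s) - \mu(X_{t_n})\bigr]\d s
+
c_{-1}h\bigl(X_{t_n}^{-1} - X_{t_{n+1}}^{-1}\bigr)
+
h\bigl[\hat\mu(X_{t_n}) - \hat\mu(\P(X_{t_n}))\bigr].
\end{equation*}
Applying Itô's formula to $\mu(X_\cdot)$ and to $X_\cdot^{-1}$, together with a stochastic Fubini on the second term, I would then set
\begin{equation*}
R_{n+1}^{(2)} := \sigma\!\int_{t_n}^{t_{n+1}}\!(t_{n+1}-r)\mu'(X_r)\,\d W_r + c_{-1}\sigma h\!\int_{t_n}^{t_{n+1}}\! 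X_s^{-2}\,\d W_s,
\end{equation*}
so that $\E\bigl[R_{n+1}^{(2)} \mid \mathcal{F}_{t_n}\bigr] = 0$, and let $R_{n+1}^{(1)} := R_{n+1} - R_{n+1}^{(2)}$ collect the correction defect $X_{t_n} - \P(X_{t_n})$, the one-sided-Lipschitz defect $h[\hat\mu(X_{t_n}) - \hat\mu(\P(X_{t_n}))]$, and the deterministic-time Riemann integrals of the FLPs $\mu'\mu$, $\mu''$, $X^{-2}\mu$ and $X^{-3}$ produced by the Itô expansion.

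For $R_{n+1}^{(1)}$ the triangle and Jensen inequalities reduce the task to bounding $\sup_{s \in [0,T]} \E[|\varphi(X_s)|^{2p}]$ for each FLP $\varphi$ above, together with $\E[|X_{t_n}|^{-2pm_1} + |X_{t_n}|^{2pm_2}]$ coming from \eqref{2024explicit-eq:correction_error}; since every such $\varphi$ is dominated by $C(1 + x^{\deg^-(\varphi)} + x^{\deg^+(\varphi)})$ on $(0,+\infty)$, the positive-exponent contributions are automatic from Lemma \ref{2024explicit-lem:X_integrability}, while the negative-exponent ones reduce to $\E[|X_s|^{-q}]$ and are governed by Assumption \ref{2024explicit-ass:inverse_moment_boundedness_of_analytical_solution}. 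The one-sided-Lipschitz defect is handled by a mean value estimate combined with the FLP structure of $\hat\mu'$, yielding once again only moments of FLPs of $X_{t_n}$ and $\P(X_{t_n})$. For $R_{n+1}^{(2)}$, the Burkholder-Davis-Gundy inequality converts the $L^{2p}$-norm into an $L^p$-norm of a quadratic-variation integral which, after a further application of Jensen, also reduces to FLP-moments of $X_r$. The main obstacle is the careful bookkeeping that ensures every singular exponent produced in the expansion is absorbed by one of the two thresholds $p \leq -p^*/(2\deg^-(\mu))$ (driven by $\mu$ itself) and $p \leq p^*/(2m_1)$ (driven by the consistency error \eqref{2024explicit-eq:correction_error}); once this is verified termwise, uniformity in $n=1,\dots,M$ is immediate because every resulting estimate depends only on $\sup_{t\in[0,T]} \E[|X_t|^{\pm q}]$, which is time-uniform under the standing hypotheses.
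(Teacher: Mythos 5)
Your proposal takes a genuinely different route from the paper, and for this particular lemma the route does not work on the stated range of $p$. The paper's proof of Lemma \ref{2024explicit-lem:Rn_bounds} performs \emph{no} It\^o expansion: since the lemma only claims a rough bound (constants may depend on $h$; they are later multiplied by $h^{-2p}$ and $h^{-p}$ in Theorem \ref{2024explicit-thm:main_convergence_result}), it estimates the remainder directly from the unexpanded definition \eqref{2024explicit-eq:Rn_definition}. There the only ingredients are: the consistency defect $X_{t_n}-\P(X_{t_n})$, controlled by \eqref{2024explicit-eq:correction_error} (this is where $p\leq \tfrac{p^*}{2m_1}$ enters); the drift integral, whose moment reduces by the FLP structure and Lemma \ref{2024explicit-lem:X_integrability} to $\E\big[\vert X_s\vert^{2p\deg^-(\mu)}\big]$ (this is where $p\leq -\tfrac{p^*}{2\deg^-(\mu)}$ enters); the term $c_{-1}hX_{t_{n+1}}^{-1}$, absorbed because $\deg^-(\mu)\leq -1$ by Lemma \ref{2024explicit-lem:deg_mu}; and the tamed term $\hat\mu(\P(X_{t_n}))h$, bounded by $Ch^{-1/2}\vee C\vert X_{t_n}\vert$ via Assumption \ref{2024explicit-ass:hat_mu(Ph)_bound}, which needs only positive moments.

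The gap in your argument is precisely the "bookkeeping" you assert works out: it does not. After It\^o's formula, the integrands are the FLPs $\mu'\mu$, $\mu''$, $x^{-2}\mu$, $x^{-3}$ (in $R^{(1)}$) and $\mu'$, $x^{-2}$ (in $R^{(2)}$), whose negative degrees are strictly worse than $\deg^-(\mu)$, e.g.\ $\deg^-(\mu'\mu)=2\deg^-(\mu)-1$ and $\deg^-(x^{-2}\mu)=\deg^-(\mu)-2$. Controlling their $2p$-th moments through Assumption \ref{2024explicit-ass:inverse_moment_boundedness_of_analytical_solution} requires conditions such as $2p\big(1-2\deg^-(\mu)\big)\leq p^*$, strictly stronger than $p\leq -\tfrac{p^*}{2\deg^-(\mu)}$ whenever $p^*<\infty$. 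Concretely, for the CIR-type drift \eqref{2024explicit-eq:CIR_mu_definition} one has $\deg^-(\mu)=-1$ and the lemma asserts finiteness for all $p\leq \tfrac{p^*}{2}$, yet your expansion produces the term $\int_{t_n}^{t_{n+1}}\int X_r^{-3}\,\d r\,\d s$, whose $2p$-th moment is only known to be finite when $6p\leq p^*$ --- exactly the stronger restriction the paper itself imposes in Section \ref{2024explicit-sec:examples}, where the It\^o expansion \emph{is} used, but for the different purpose of extracting the rate $h^{4p}$. The same issue affects your mean-value treatment of $h\big[\hat\mu(X_{t_n})-\hat\mu(\P(X_{t_n}))\big]$, which introduces $\hat\mu'$ with $\deg^-(\hat\mu')=\deg^-(\hat\mu)-1$. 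So your proof establishes the conclusion only on a strictly smaller range of $p$ than claimed. The repair is to not expand at all: since no power of $h$ needs to be exhibited, the crude termwise bounds on \eqref{2024explicit-eq:Rn_definition} suffice and match the two stated thresholds exactly.
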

\begin{proof}
Thanks to Lemma \ref{2024explicit-lem:deg_mu} and Assumption \ref{2024explicit-ass:hat_mu(Ph)_bound}, it suffices to show
\begin{equation}\label{2024explicit-eq:mu_x^2p_bound}
    \E\Big[ \big\vert X_t \big\vert^{-2m_1p} \Big]
    +
    \E\Big[ \big\vert \mu(X_t) \big\vert^{2p} \Big] < +\infty.
\end{equation}
By Definition \ref{2024explicit-def:FLP} and Lemma \ref{2024explicit-lem:X_integrability}, inequality \eqref{2024explicit-eq:mu_x^2p_bound} reduces to
$$
    \E\Big[ \big\vert X_t \big\vert^{-2m_1p} \Big]
    +
    \E\Big[ \big\vert X_t \big\vert^{2p \deg^-(\mu)} \Big] < +\infty.
$$
An application of Assumption \ref{2024explicit-ass:inverse_moment_boundedness_of_analytical_solution} finishes the proof.
\end{proof}

Now we present the crucial error estimate.
\begin{theorem}\label{2024explicit-thm:main_convergence_result}
Let Assumptions 
\ref{2024explicit-ass:solution_existence_and_uniqueness}, 
\ref{2024explicit-ass:mu_one_sided_Lipschitz}, 
\ref{2024explicit-ass:inverse_moment_boundedness_of_analytical_solution},
\ref{2024explicit-ass:hat_mu_one_sided_Lipschitz},
\ref{2024explicit-ass:Ph_properties}
and
\ref{2024explicit-ass:hat_mu(Ph)_bound}
stand.
For any $1 \leq  p \leq - \tfrac{p^*}{2 \deg^-(\mu)} \wedge \tfrac{p^*}{2m_1}$, where $p^*>0$ comes from Assumption \ref{2024explicit-ass:inverse_moment_boundedness_of_analytical_solution} and $m_1$ from Assumption \ref{2024explicit-ass:Ph_properties}, it holds that
    \begin{equation}
    \begin{split}
        \E\Big[ \sup_{n=0,...,M} \big\vert X_{t_{n}} - Y_n \big\vert^{2p} \Big]
        & \leq
        C
        \Bigg(
            \tfrac{1}{h^{2p}}
            \sup_{n = 1,...,M}
            \E \Big[ \big\vert R_{n}^{(1)} \big\vert^{2p} \Big]
            +
            \tfrac{1}{h^{p}}
            \sup_{n = 1,...,M}
            \E \bigg[
            \big\vert  R_{n}^{(2)} \big\vert^{2p}
            \bigg] 
        \Bigg).
    \end{split}
    \end{equation}
\end{theorem}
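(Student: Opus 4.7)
The plan is to derive an error recursion by subtracting the scheme \eqref{2024explicit-eq:scheme} from the discretized identity \eqref{2024explicit-eq:SDE_rewrite}, then exploit positivity to eliminate the implicit correction. Setting $e_n := X_{t_n} - Y_n$, the error obeys
\begin{equation*}
e_{n+1} - c_{-1} h \bigl( X_{t_{n+1}}^{-1} - Y_{n+1}^{-1} \bigr) = \P(X_{t_n}) - \P(Y_n) + h \bigl[ \hat\mu(\P(X_{t_n})) - \hat\mu(\P(Y_n)) \bigr] + R_{n+1}^{(1)} + R_{n+1}^{(2)}.
\end{equation*}
By Assumption \ref{2024explicit-ass:solution_existence_and_uniqueness} and Lemma \ref{2024explicit-lem:positivity_preserving}, both $X_{t_{n+1}}$ and $Y_{n+1}$ are positive, so $e_{n+1}(X_{t_{n+1}}^{-1} - Y_{n+1}^{-1}) = -|e_{n+1}|^2/(X_{t_{n+1}}Y_{n+1}) \leq 0$. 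Squaring the identity and dropping the non-negative cross term yields $|e_{n+1}|^2 \leq |\text{RHS}|^2$ with the implicit contribution eliminated.

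Next I would expand the squared right-hand side into pure squares plus cross terms. The one-sided Lipschitz condition on $\hat\mu$ (Assumption \ref{2024explicit-ass:hat_mu_one_sided_Lipschitz}) controls $2h\bigl\langle \P(X_{t_n})-\P(Y_n), \hat\mu(\P(X_{t_n}))-\hat\mu(\P(Y_n))\bigr\rangle \leq 2Lh|\P(X_{t_n})-\P(Y_n)|^2$, and Assumption \ref{2024explicit-ass:hat_mu(Ph)_bound} gives $h^2|\hat\mu(\P(X_{t_n}))-\hat\mu(\P(Y_n))|^2 \leq Ch|\P(X_{t_n})-\P(Y_n)|^2$; contractivity of $\P$ (Assumption \ref{2024explicit-ass:Ph_properties}) then replaces $|\P(X_{t_n})-\P(Y_n)|^2$ by $|e_n|^2$. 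Cross terms involving $R_{n+1}^{(1)}$ are handled by Young's inequality with $h^{\pm 1}$ weights to produce a $Ch^{-1}|R_{n+1}^{(1)}|^2$ contribution, while cross terms with $R_{n+1}^{(2)}$ are retained as martingale increments. Collecting yields
\begin{equation*}
|e_{n+1}|^2 \leq (1+Ch)|e_n|^2 + 2\bigl\langle \P(X_{t_n}) - \P(Y_n),\, R_{n+1}^{(2)} \bigr\rangle + Ch^{-1}|R_{n+1}^{(1)}|^2 + C|R_{n+1}^{(2)}|^2.
\end{equation*}

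Iterating with $e_0 = 0$ and using $(1+Ch)^M \leq e^{CT}$ produces
\begin{equation*}
|e_{n+1}|^2 \leq C\sum_{i=0}^{n}\bigl[h^{-1}|R_{i+1}^{(1)}|^2 + |R_{i+1}^{(2)}|^2\bigr] + C\sum_{i=0}^{n}(1+Ch)^{n-i}\bigl\langle \P(X_{t_i}) - \P(Y_i),\, R_{i+1}^{(2)} \bigr\rangle.
\end{equation*}
Raising to the $p$-th power, taking supremum and expectation, the deterministic sums contribute the claimed $h^{-2p}\sup_n \E[|R_n^{(1)}|^{2p}]$ and $h^{-p}\sup_n \E[|R_n^{(2)}|^{2p}]$ bounds via Jensen's inequality and the identity $Mh = T$.

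The main obstacle is the martingale contribution. Since $R_{n+1}^{(2)}$ is built from stochastic integrals with zero $\mathcal{F}_{t_n}$-conditional mean while $\P(X_{t_i}) - \P(Y_i)$ is $\mathcal{F}_{t_i}$-measurable, the cumulative sum defines a discrete martingale, so the discrete Burkholder-Davis-Gundy inequality bounds its $p$-th moment by the quadratic variation $\sum_i |\P(X_{t_i}) - \P(Y_i)|^2 |R_{i+1}^{(2)}|^2$. Factoring out $\sup_i |e_i|^p$ by contractivity and applying Young's inequality with a small parameter $\varepsilon > 0$ splits this into $\varepsilon\,\E[\sup_n |e_n|^{2p}]$, which can be absorbed into the left-hand side, plus a remainder of order $h^{-p}\sup_n\E[|R_n^{(2)}|^{2p}]$. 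This absorption step is the crux of the argument: it is precisely what produces the $h^{-p}$ factor, rather than $h^{-2p}$, for the stochastic remainder, reflecting the square-root gain typical of martingales. Combining all the bounds completes the proof.
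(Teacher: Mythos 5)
Your proposal is correct and shares the paper's core strategy: the same error identity, the same positivity argument to discard the implicit contribution $c_{-1}h\big(X_{t_{n+1}}^{-1}-Y_{n+1}^{-1}\big)$, the same cross-term estimates (Assumption \ref{2024explicit-ass:hat_mu_one_sided_Lipschitz} for the $\hat\mu\circ\P$ term, Assumption \ref{2024explicit-ass:hat_mu(Ph)_bound} for the quadratic term, Young with $h^{\pm 1}$ weights for $R^{(1)}$), and the discrete Burkholder--Davis--Gundy inequality (Theorem \ref{2024explicit-thm:discrete_BDG}) for the martingale part. Where you genuinely deviate is in how the recursion is closed. The paper deliberately does \emph{not} iterate away the $(2C+1)h\vert e_i\vert^2$ terms; it keeps them in the sum, bounds the martingale contribution via Cauchy--Schwarz as $\big(\E[\sup_n\vert e_n\vert^{2p}]\big)^{1/2}\cdot(\cdots)^{1/2}$, and then invokes the nonlinear discrete Gronwall inequality (Lemma \ref{2024explicit-lem:Gronwall}), which simultaneously handles the sum $\zeta\sum_i x_i$ and the square-root term $\eta\sqrt{x_M}$. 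You instead iterate fully (legitimate since $e_0=0$, so the $\vert e_i\vert^2$ terms are absorbed into the factors $(1+Ch)^{n-i}\leq e^{CT}$) and close the estimate by an $\varepsilon$-Young absorption of $\E[\sup_n\vert e_n\vert^{2p}]$ into the left-hand side; this is more elementary and dispenses with Lemma \ref{2024explicit-lem:Gronwall} altogether (that lemma essentially packages the same absorption). Two points must be made explicit for your route to be airtight: (i) the weighted sum $\sum_{i\leq n}(1+Ch)^{n-i}\big\langle \P(X_{t_i})-\P(Y_i),R_{i+1}^{(2)}\big\rangle$ is \emph{not} a martingale in $n$ because the weights depend on $n$; as in the paper, you must first pull out $(1+Ch)^{n}\leq e^{CT}$ and apply BDG to the genuine martingale $\sum_{i\leq n}(1+Ch)^{-i}\big\langle \P(X_{t_i})-\P(Y_i),R_{i+1}^{(2)}\big\rangle$; (ii) absorbing $\varepsilon\,\E[\sup_n\vert e_n\vert^{2p}]$ into the left-hand side is only valid if that quantity is a priori finite, which does hold here by Lemma \ref{2024explicit-lem:X_integrability} and Lemma \ref{2024explicit-lem:Y_integrability}, but this should be stated rather than left implicit.
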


\begin{proof}
Combining \eqref{2024explicit-eq:scheme} and \eqref{2024explicit-eq:SDE_rewrite} yields
\begin{equation}
\begin{aligned}
\label{2024explicit-eq:X-Y_beginning}
    & X_{t_{n+1}} - c_{-1}h X_{t_{n+1}}^{-1} - Y_{n+1} + c_{-1}h Y_{n+1}^{-1}\\
    & \qquad \qquad
    =
    \P(X_{t_n}) - \P(Y_n)
    +
    \big( \hat\mu(\P(X_{t_n})) - \hat\mu(\P(Y_n)) \big) h
    +
    R_{n+1}
\end{aligned}
\end{equation}
for any $n = 0,...,M-1$. 
Squaring both sides of \eqref{2024explicit-eq:X-Y_beginning} leads to
\begin{equation}\label{2024explicit-eq:Xn-Yn_square}
\begin{aligned}
    \big\vert X_{t_{n+1}} & - c_{-1}h X_{t_{n+1}}^{-1} - Y_{n+1} + c_{-1}h Y_{n+1}^{-1} \big\vert^2\\
    & =
    \big\vert 
        \P(X_{t_n}) - \P(Y_n)
        +
        \big( \hat\mu(\P(X_{t_n})) - \hat\mu(\P(Y_n)) \big) h
        +
        R_{n+1}
    \big\vert^2\\
    & =
    \big\vert \P(X_{t_n}) - \P(Y_n) \big\vert^2
    +
    \big\vert  \hat\mu(\P(X_{t_n})) - \hat\mu(\P(Y_n))  \big\vert^2 h^2
    +
    \big\vert R_{n+1} \big\vert^2\\
    & \quad
    +
    2 h \big\langle \P(X_{t_n}) - \P(Y_n), \hat\mu(\P(X_{t_n})) - \hat\mu(\P(Y_n)) \big\rangle\\
    & \quad
    +
    2 \big\langle \P(X_{t_n}) - \P(Y_n), R_{n+1} \big\rangle\\
    & \quad
    +
    2 h \big\langle \hat\mu(\P(X_{t_n})) - \hat\mu(\P(Y_n)), R_{n+1} \big\rangle.
\end{aligned}
\end{equation}
Note that
\begin{equation*}
\begin{aligned}
    \big\vert X_{t_{n+1}} - c_{-1}h X_{t_{n+1}}^{-1} - Y_{n+1} + c_{-1}h Y_{n+1}^{-1} \big\vert^2
    & =
    \big\vert X_{t_{n+1}} - Y_{n+1} \big\vert^2
    +
    2 c_{-1} h \tfrac{\big\vert X_{t_{n+1}} - Y_{n+1} \big\vert^2}{X_{t_{n+1}}Y_{n+1}}\\
    & \quad +
    c_{-1}^2 h^2
    \big\vert X_{t_{n+1}}^{-1} - Y_{n+1}^{-1} \big\vert^2\\
    & \geq
    \big\vert X_{t_{n+1}} - Y_{n+1} \big\vert^2.
\end{aligned}
\end{equation*}
Denote
$e_n : = X_{t_n} - Y_n$ for brevity. 
Then \eqref{2024explicit-eq:Xn-Yn_square} reduces to
\begin{equation}\label{2024explicit-eq:en_brevity}
\begin{aligned}
    \big\vert e_{n+1} \big\vert^2
    & \leq
    \big\vert \P(X_{t_n}) - \P(Y_n) \big\vert^2
    +
    \big\vert  \hat\mu(\P(X_{t_n})) - \hat\mu(\P(Y_n))  \big\vert^2 h^2
    +
    \big\vert R_{n+1} \big\vert^2\\
    & \quad
    +
    2 h \big\langle \P(X_{t_n}) - \P(Y_n), \hat\mu(\P(X_{t_n})) - \hat\mu(\P(Y_n)) \big\rangle\\
    & \quad
    +
    2 \big\langle \P(X_{t_n}) - \P(Y_n), R_{n+1} \big\rangle\\
    & \quad
    +
    2 h \big\langle \hat\mu(\P(X_{t_n})) - \hat\mu(\P(Y_n)), R_{n+1} \big\rangle.
\end{aligned}
\end{equation}
Assumption \ref{2024explicit-ass:hat_mu_one_sided_Lipschitz} infers that
\begin{equation*}
    2 h \big\langle \P(X_{t_n}) - \P(Y_n), \hat\mu(\P(X_{t_n})) - \hat\mu(\P(Y_n)) \big\rangle
    \leq
    2 L h \big\vert \P(X_{t_n}) - \P(Y_n) \big\vert^2.
\end{equation*}
By the Young inequality, it holds that
\begin{equation*}
\begin{split}
    2 \big\langle \P(X_{t_n}) - \P(Y_n), R_{n+1}^{(1)} \big\rangle
    & \leq
    h \big\vert \P(X_{t_n}) - \P(Y_n) \big\vert^2
    +
    \tfrac{1}{h} \big\vert R_{n+1}^{(1)} \big\vert^2,\\
    2 h \big\langle \hat\mu(\P(X_{t_n})) - \hat\mu(\P(Y_n)), R_{n+1} \big\rangle
    & \leq
    \big\vert \hat\mu(\P(X_{t_n})) - \hat\mu(\P(Y_n)) \big\vert^2 h^2
    +
    \big\vert R_{n+1} \big\vert^2.
\end{split}
\end{equation*}
Combining the above estimations, one obtains from \eqref{2024explicit-eq:en_brevity} that
\begin{equation}
\begin{aligned}
\label{2024explicit-eq:en_after_Young_estimate}
    \big\vert e_{n+1} \big\vert^2
    & \leq
    (1 + 2 L h + h) \big\vert \P(X_{t_n}) - \P(Y_n) \big\vert^2
    +
    2 \big\vert  \hat\mu(\P(X_{t_n})) - \hat\mu(\P(Y_n))  \big\vert^2 h^2\\
    & \quad +
    2 \big\vert R_{n+1} \big\vert^2
    +
    \tfrac{1}{h} \big\vert R_{n+1}^{(1)} \big\vert^2
    +
    2 \big\langle \P(X_{t_n}) - \P(Y_n), R_{n+1}^{(2)} \big\rangle.
\end{aligned}
\end{equation}
For the second term in \eqref{2024explicit-eq:en_after_Young_estimate}, Assumption \ref{2024explicit-ass:hat_mu(Ph)_bound} implies that
\begin{equation}\label{2024explicit-eq:hat_mu_Ph_Xn-Yn_estimate}
    \big\vert  \hat\mu(\P(X_{t_n})) - \hat\mu(\P(Y_n))  \big\vert^2 h^2
    \leq
    C h \vert e_n \vert^2.
\end{equation}
Bearing \eqref{2024explicit-eq:Ph_contractivity} in mind, substituting \eqref{2024explicit-eq:hat_mu_Ph_Xn-Yn_estimate} into \eqref{2024explicit-eq:en_after_Young_estimate} leads to
\begin{equation}
\begin{aligned}
    \big\vert e_{n+1} \big\vert^2
    & \leq
    (1 + 2 L h + h + 2 C h) \big\vert e_n \big\vert^2
    +
    2 \big\vert R_{n+1} \big\vert^2
    +
    \tfrac{1}{h} \big\vert R_{n+1}^{(1)} \big\vert^2 \\
    & \quad 
    +
    2 \big\langle \P(X_{t_n}) - \P(Y_n), R_{n+1}^{(2)} \big\rangle,
\end{aligned}
\end{equation}
By iteration, one obtains
\begin{equation}\label{2024explicit-eq:en_iretation}
\begin{aligned}
    \big\vert e_{n+1} \big\vert^2
    & \leq
    \sum_{i=0}^{n}
    (1 + 2 L h)^{n-i}
    (2 C+1) h \vert e_i \vert^2
    +
    \sum_{i=0}^{n}
    (1 + 2 L h)^{n-i}
    2 \big\vert R_{i+1} \big\vert^2\\
    & \quad 
    +
    \sum_{i=0}^{n}
    (1 + 2 L h)^{n-i}
    \tfrac{1}{h} \big\vert R_{i+1}^{(1)} \big\vert^2 
    +
    \sum_{i=0}^{n}
    (1 + 2 L h)^{n-i}
    2 \big\langle \P(X_{t_i}) - \P(Y_i), R_{i+1}^{(2)} \big\rangle.
\end{aligned}
\end{equation}
Here the term $(2C+1)h \vert e_n \vert^2$ is retained without iteration in order to facilitate the application of a type of Gronwall's inequality (Lemma \ref{2024explicit-lem:Gronwall}), which will be shown in the forthcoming derivation.
For $p \geq 1$ and $l \in \{0,1,...,M-1\}$, raising both sides of \eqref{2024explicit-eq:en_iretation} to the power of $p$, taking supremum for $ n =0,..., l$ and expectations as well as applying the Jensen inequality lead to
\begin{equation}\label{2024explicit-eq:en^p_Jensen}
\begin{aligned}
    \E\Big[ \sup_{n=0,...,l} \big\vert e_{n+1} \big\vert^{2p} \Big]
    &
    \leq
    4^{p-1}
    \E \Bigg[
        \sup_{n=0,...,l}
        \bigg\vert
        \sum_{i=0}^{n}
        (1 + 2 L h)^{n-i}
        (2 C + 1)h \vert e_i \vert^2
        \bigg\vert^p
    \Bigg]\\
    & \quad +
    4^{p-1}
    \E \Bigg[
        \sup_{n=0,...,l}
        \bigg\vert
        \sum_{i=0}^{n}
        (1 + 2 L h)^{n-i}
        2 \big\vert R_{i+1} \big\vert^2
        \bigg\vert^p
    \Bigg]\\
    & \quad
    +
    4^{p-1}
    \E \Bigg[
        \sup_{n=0,...,l}
        \bigg\vert
        \sum_{i=0}^{n}
        (1 + 2 L h)^{n-i}
        \tfrac{1}{h} \big\vert R_{i+1}^{(1)} \big\vert^2
        \bigg\vert^p
    \Bigg]\\
    & \quad +
    4^{p-1}
    \E \Bigg[
        \sup_{n=0,...,l}
        \bigg\vert
        \sum_{i=0}^{n}
        (1 + 2 L h)^{n-i}
        2 \big\langle \P(X_{t_i}) - \P(Y_i), R_{i+1}^{(2)} \big\rangle
        \bigg\vert^p
    \Bigg].
\end{aligned}
\end{equation}
Using the Jensen inequality twice again on the first and second term gives
\begin{equation}\label{2024explicit-eq:sup_en_1st_term}
\begin{aligned}
    \E \Bigg[
        \sup_{n=0,...,l}
        \bigg\vert
        \sum_{i=0}^{n}
        (1 + 2 L h)^{n-i}
        (2 C + 1) h \vert e_i \vert^2
        \bigg\vert^p
    \Bigg]
    & \leq
    \text{e}^{2Lhlp}
    \tfrac{1}{h^{p-1}}
    \E \Bigg[
        \sum_{i=0}^{l}
        (2C + 1)^p
        h^p \vert e_i \vert^{2p}
    \Bigg]\\
    & \leq
    (2C+1)^p \text{e}^{2LTp} h
    \sum_{i=0}^{l}
    \E \Big[ \sup_{k=0,...,i} \vert e_k \vert^{2p} \Big],
\end{aligned}
\end{equation}
and
\begin{equation}\label{2024explicit-eq:sup_en_2nd_term}
\begin{aligned}
    \E \Bigg[
        \sup_{n=0,...,l}
        \bigg\vert
        \sum_{i=0}^{n}
        (1 + 2 L h)^{n-i}
        2 \big\vert R_{i+1} \big\vert^2
        \bigg\vert^p
    \Bigg]
    & \leq
    2^p \text{e}^{2LTp}
    \tfrac{1}{h^{p-1}}
    \sum_{i=0}^{l}
    \E \Big[ \big\vert R_{i+1} \big\vert^{2p} \Big].
\end{aligned}
\end{equation}
Similarly for the third term,
\begin{equation}\label{2024explicit-eq:sup_en_3rd_term}
\begin{aligned}
    \E \Bigg[
        \sup_{n=0,...,l}
        \bigg\vert
        \sum_{i=0}^{n}
        (1 + Ch)^{n-i}
        \tfrac{1}{h} \big\vert R_{i+1}^{(1)} \big\vert^2
        \bigg\vert^p
    \Bigg]
    & \leq
    \text{e}^{2LTp}
    \tfrac{1}{h^{2p-1}}
    \sum_{i=0}^{l}
    \E \Big[ \big\vert R_{i+1}^{(1)} \big\vert^{2p} \Big].
\end{aligned}
\end{equation}
For the last term, observing that
$\P(X_{t_i}) - \P(Y_i)$
and
$R_{i+1}^{(2)}$ are independent for all $i \in \{0,1,...,M-1\}$,
utilizing Theorem \ref{2024explicit-thm:discrete_BDG} and \eqref{2024explicit-eq:Ph_contractivity} results in
\begin{equation*}
\begin{aligned}
    & \E \Bigg[
        \sup_{n=0,...,l}
        \bigg\vert
        \sum_{i=0}^{n}
        (1 + 2 L h)^{n-i}
        2 \big\langle \P(X_{t_i}) - \P(Y_i), R_{i+1}^{(2)} \big\rangle
        \bigg\vert^p
    \Bigg]\\
    & \leq
    2^p \text{e}^{2LTp}
    \E \Bigg[
        \bigg\vert
        \sum_{i=0}^{l}
        \Big( 
            (1 + 2 L h)^{-i}
            ( \P(X_{t_i}) - \P(Y_i) ) R_{i+1}^{(2)} 
        \Big)^2
        \bigg\vert^{\frac{p}{2}}
    \Bigg]\\
    & \leq
    2^p \text{e}^{2LTp}
    \E \Bigg[
        \bigg\vert
        \sum_{i=0}^{l}
        \big\vert e_i \big\vert^{2}
        \big\vert  R_{i+1}^{(2)} \big\vert^2
        \bigg\vert^{\frac{p}{2}}
    \Bigg]\\
    & \leq
    2^p \text{e}^{2LTp}
    \E \Bigg[
        \sup_{n=0,...,M}{ \big\vert e_n \big\vert^{p}}
        \bigg\vert
        \sum_{i=0}^{l}
        \big\vert  R_{i+1}^{(2)} \big\vert^2
        \bigg\vert^{\frac{p}{2}}
    \Bigg].
\end{aligned}
\end{equation*}
The Cauchy-Schwarz inequality infers that
\begin{equation*}
\begin{split}
    & 2^p \text{e}^{2LTp}
    \E \Bigg[
        \sup_{n=0,...,M}{ \big\vert e_n \big\vert^{p}}
        \bigg\vert
        \sum_{i=0}^{l}
        \big\vert  R_{i+1}^{(2)} \big\vert^2
        \bigg\vert^{\frac{p}{2}}
    \Bigg]\\
    & \leq
    2^p \text{e}^{2LTp}
    \Bigg( \E \bigg[
        \sup_{n=0,...,M}{ \big\vert e_n \big\vert^{2p}}
    \bigg] \Bigg)^{\frac12}
    \cdot
    \Bigg( \E \bigg[
        \bigg\vert
        \sum_{i=0}^{l}
        \big\vert  R_{i+1}^{(2)} \big\vert^2
        \bigg\vert^{p}
    \bigg] \Bigg)^{\frac12}.
\end{split}
\end{equation*}
Another application of the Jensen inequality finally leads to
\begin{equation}\label{2024explicit-eq:sup_en_4th_term}
\begin{aligned}
    & \E \Bigg[
        \sup_{n=0,...,l}
        \bigg\vert
        \sum_{i=0}^{n}
        (1 + 2 L h)^{n-i}
        2 \big\langle \P(X_{t_i}) - \P(Y_i), R_{i+1}^{(2)} \big\rangle
        \bigg\vert^p
    \Bigg]\\
    & \leq
    2^p \text{e}^{2LTp}
    \Bigg( \E \bigg[
        \sup_{n=0,...,M}{ \big\vert e_n \big\vert^{2p}}
    \bigg] \Bigg)^{\frac12}
    \cdot
    \Bigg( 
    \tfrac{1}{h^{p-1}}
    \sum_{i=0}^{l}
    \E \bigg[
        \big\vert  R_{i+1}^{(2)} \big\vert^{2p}
    \bigg] \Bigg)^{\frac12}.
\end{aligned}
\end{equation}
Combining \eqref{2024explicit-eq:sup_en_1st_term}-\eqref{2024explicit-eq:sup_en_4th_term}, one concludes that
\begin{equation}\label{2024explicit-eq:en_conclude_before_Gronwall}
\begin{aligned}
    \E\Big[ \sup_{n=0,...,l} \big\vert e_{n+1} \big\vert^{2p} \Big]
    & \leq
    4^{p-1} (2C+1)^p \text{e}^{2LTp} h
    \sum_{i=0}^{l}
    \E \Big[ \sup_{k=0,...,i} \vert e_k \vert^{2p} \Big]\\
    & \quad
    +
    4^{p-1} \text{e}^{2LTp}
    \tfrac{1}{h^{p-1}}
    \sum_{i=0}^{l}
    \E \Big[ \big\vert R_{i+1} \big\vert^{2p} \Big]\\
    & \quad
    +
    4^{p-1} \text{e}^{2LTp}
    \tfrac{1}{h^{2p-1}}
    \sum_{i=0}^{l}
    \E \Big[ \big\vert R_{i+1}^{(1)} \big\vert^{2p} \Big]\\
    & \quad
    +
    4^{p-1} 2^p \text{e}^{2LTp}
    \Bigg( \E \bigg[
        \sup_{n=0,...,M}{ \big\vert e_n \big\vert^{2p}}
    \bigg] \Bigg)^{\frac12}
    \cdot
    \Bigg( 
    \tfrac{1}{h^{p-1}}
    \sum_{i=0}^{l}
    \E \bigg[
        \big\vert  R_{i+1}^{(2)} \big\vert^{2p}
    \bigg] \Bigg)^{\frac12}\\
    & \leq
    C h
    \sum_{i=0}^{l}
    \E \Big[ \sup_{k=0,...,i} \vert e_k \vert^{2p} \Big]\\
    & \quad
    +
    C
    \tfrac{1}{h^{p}}
    \sup_{n=1,...,M}
    \E \Big[ \big\vert R_{n}^{(2)} \big\vert^{2p} \Big]
    +
    C
    \tfrac{1}{h^{2p}}
    \sup_{n=1,...,M}
    \E \Big[ \big\vert R_{n}^{(1)} \big\vert^{2p} \Big]\\
    & \quad
    +
    C
    \Bigg( \E \bigg[
        \sup_{n=0,...,M}{ \big\vert e_n \big\vert^{2p}}
    \bigg] \Bigg)^{\frac12}
    \cdot
    \Bigg( 
    \tfrac{1}{h^{p}}
    \sup_{n=1,...,M}
    \E \bigg[
        \big\vert  R_{n}^{(2)} \big\vert^{2p}
    \bigg] \Bigg)^{\frac12},
\end{aligned}
\end{equation}
where the Jensen inequality was used again to split the term $\vert R_{i+1} \vert^{2p}$.
By setting
\begin{align*}
    \delta 
    & :=
    C
    \tfrac{1}{h^{p}}
    \sup_{n=1,...,M}
    \E \Big[ \big\vert R_{n}^{(2)} \big\vert^{2p} \Big]
    +
    C
    \tfrac{1}{h^{2p}}
    \sup_{n=1,...,M}
    \E \Big[ \big\vert R_{n}^{(1)} \big\vert^{2p} \Big],\\
    \zeta 
    & := C h,\\
    \eta 
    & := 
    C 
    \Bigg( 
        \tfrac{1}{h^{p}}
        \sup_{n=1,...,M}
        \E \bigg[
            \big\vert  R_{n}^{(2)} \big\vert^{2p}
        \bigg] 
    \Bigg)^{\frac12},
\end{align*}
inequality \eqref{2024explicit-eq:en_conclude_before_Gronwall} reads as
\begin{equation}
\begin{aligned}
    \E\Big[ \sup_{n=0,...,l} \big\vert e_{n+1} \big\vert^{2p} \Big]
    & \leq
    \delta +
    \zeta
    \sum_{i=0}^{l}
    \E \Big[ \sup_{k=0,...,i} \vert e_k \vert^{2p} \Big]
    +
    \eta
    \Bigg( \E \bigg[
        \sup_{n=0,...,M}{ \big\vert e_n \big\vert^{2p}}
    \bigg] \Bigg)^{\frac12}.
\end{aligned}
\end{equation}
Finally, Lemma \ref{2024explicit-lem:Rn_bounds} together with Lemma \ref{2024explicit-lem:Gronwall} gives
\begin{equation}
\begin{aligned}
    \E\Big[ \sup_{n=0,...,M} \big\vert e_{n} \big\vert^{2p} \Big]
    & \leq
    2 \big( \delta + \eta^2 \big)
    \exp{(2 \zeta M)},
\end{aligned}
\end{equation}
which completes the proof.
\end{proof}

After establishing the upper error bound for the proposed scheme \eqref{2024explicit-eq:scheme} applied to the transformed SDE \eqref{2024explicit-eq:transformed_SDE}, it remains to perform an inverse transformation and derive the convergence results for the original SDE \eqref{2024explicit-eq:original_SDE}.
The specific form of the transformation \eqref{2024explicit-eq:Lamperti_transformation} and its inverse inherently depend on the structure of the original diffusion coefficient,
rendering them vary from models.
Nevertheless, for most financial applications, the transformation takes the canonical form:
$$
\mathbb{L}(x) = C x^k, 
\quad x>0,
$$
where $k \in \R \setminus \{0\}$.
The inverse transformation is therefore given by
$$
\mathbb{L}^{-1}(x) = C x^{\frac{1}{k}},
\quad x>0.
$$
The error after transforming back is expressed as
\begin{equation*}
\begin{split}
    \E \bigg[ 
        \sup_{n=0,...,M} 
        \Big\vert 
            \mathbb{X}_{t_n} - \mathbb{Y}_n 
	\Big\vert^{2 p} 
    \bigg]
    & =
    \E \bigg[ 
        \sup_{n=0,...,M} 
        \Big\vert 
	      \mathbb{L}^{-1}(X_{t_n})
            - 
            \mathbb{L}^{-1}(Y_n) 
	\Big\vert^{2 p} 
    \bigg]
\end{split}
\end{equation*}
The mean value theorem implies that
$$ 
\Big\vert 
    \mathbb{L}^{-1}(x) 
    - 
    \mathbb{L}^{-1}(y)
\Big\vert 
\leq
C \left( x^{\frac{1}{k}-1} + y^{\frac{1}{k}-1} \right)  
\vert x-y \vert,
\qquad x,y >0. 
$$
Thus, when $\frac{1}{k} \geq 1$, i.e., $k \in (0,1]$, the error bound relies on the moment bounds of the analytical and numerical solutions of the transformed SDE.
In contrast, when $\frac{1}{k} < 1$, i.e., $k \notin [0,1]$, the error involves the inverse moment bounds.
To this end, we provide the estimation for the inverse moments of the numerical solution \eqref{2024explicit-eq:scheme} in the next lemma.
\begin{lemma}\label{2024explicit-lem:inverse_moment_bounds_Y}
Let Assumptions 
\ref{2024explicit-ass:solution_existence_and_uniqueness}, 
\ref{2024explicit-ass:mu_one_sided_Lipschitz}, 
\ref{2024explicit-ass:inverse_moment_boundedness_of_analytical_solution},
\ref{2024explicit-ass:hat_mu_one_sided_Lipschitz}
and
\ref{2024explicit-ass:Ph_properties}
stand.
There exists a constant $C>0$ independent of the time step size $h$ such that for any $1 \leq  p \leq - \tfrac{p^*}{2 \deg^-(\mu)} \wedge \tfrac{p^*}{2m_1}$,
\begin{equation}\label{2024explicit-eq:inverse_moment_bounds_Y}
\begin{split}
		\E \bigg[
	      \sup_{n = 0,...,M}
            \big\vert Y_n \big\vert^{-2p}
        \bigg]
        & \leq
        C \tfrac{1}{h^{2 p - 1}}
        \Bigg(
        \sup_{n = 1,...,M}
        \E \Big[ \big\vert R_{n}^{(1)} \big\vert^{2 p} \Big]
        +
        \sup_{n = 1,...,M}
        \E \Big[ \big\vert R_{n}^{(2)} \big\vert^{2 p} \Big]
        \Bigg)\\
        & \quad
        +
        C \E \bigg[
	      \sup_{t \in [0,T]}
            \big\vert X_t \big\vert^{-2p}
        \bigg].
\end{split}
\end{equation}
\end{lemma}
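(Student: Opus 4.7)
The plan is to derive a one-step algebraic identity relating $Y_{n+1}^{-1}-X_{t_{n+1}}^{-1}$ to the numerical error $e_n:=X_{t_n}-Y_n$ and the local residual $R_{n+1}$, and then to combine it with Theorem \ref{2024explicit-thm:main_convergence_result}. Subtracting the scheme \eqref{2024explicit-eq:scheme} from the discrete SDE form \eqref{2024explicit-eq:SDE_rewrite} and rearranging yields
\begin{equation*}
c_{-1}h\bigl(Y_{n+1}^{-1}-X_{t_{n+1}}^{-1}\bigr)=-e_{n+1}+\bigl(\P(X_{t_n})-\P(Y_n)\bigr)+\bigl(\hat\mu(\P(X_{t_n}))-\hat\mu(\P(Y_n))\bigr)h+R_{n+1}.
\end{equation*}
The pre-factor $c_{-1}h$ on the left-hand side is precisely the feature of the scheme (and the entire reason for the implicit term $c_{-1}hY_{n+1}^{-1}$) that allows trading an error estimate on $e$ for one on $Y^{-1}-X^{-1}$.

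The next step is to apply the contractivity bound \eqref{2024explicit-eq:Ph_contractivity} for $\P$ and the Lipschitz bound in Assumption \ref{2024explicit-ass:hat_mu(Ph)_bound} for $\hat\mu\circ\P$, together with $h^{1/2}\leq 1$ and the splitting $R_{n+1}=R_{n+1}^{(1)}+R_{n+1}^{(2)}$, to reach
\begin{equation*}
\bigl|Y_{n+1}^{-1}-X_{t_{n+1}}^{-1}\bigr|^{2p}\leq\frac{C}{h^{2p}}\bigl(|e_{n+1}|^{2p}+|e_n|^{2p}+|R_{n+1}^{(1)}|^{2p}+|R_{n+1}^{(2)}|^{2p}\bigr).
\end{equation*}
Combining with the triangle inequality $Y_n^{-1}\leq X_{t_n}^{-1}+|Y_n^{-1}-X_{t_n}^{-1}|$, and then taking the supremum over $n$ followed by expectation, would produce
\begin{equation*}
\E\Bigl[\sup_n Y_n^{-2p}\Bigr]\leq C\,\E\Bigl[\sup_t X_t^{-2p}\Bigr]+\frac{C}{h^{2p}}\Bigl(\E[\sup_n|e_n|^{2p}]+\E[\sup_n|R_n^{(1)}|^{2p}]+\E[\sup_n|R_n^{(2)}|^{2p}]\Bigr).
\end{equation*}

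To conclude, Theorem \ref{2024explicit-thm:main_convergence_result} would be invoked to dominate $\E[\sup_n|e_n|^{2p}]$ by the $R$-moments, and each $\E[\sup_n|R_n^{(i)}|^{2p}]$ would be controlled by $M\sup_n\E[|R_n^{(i)}|^{2p}]=\tfrac{T}{h}\sup_n\E[|R_n^{(i)}|^{2p}]$, with Lemma \ref{2024explicit-lem:Rn_bounds} supplying the finiteness of the $R_n^{(i)}$ moments and Assumption \ref{2024explicit-ass:inverse_moment_boundedness_of_analytical_solution} together with Lemma \ref{2024explicit-lem:X_integrability} yielding $\E[\sup_t X_t^{-2p}]<+\infty$. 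The main obstacle lies in the careful bookkeeping of the powers of $h$: the prefactor $1/h^{2p}$ must combine with the $T/h$ from the $\sup$-to-$\sum$ step, and with the estimate of Theorem \ref{2024explicit-thm:main_convergence_result}, to deliver precisely the exponent $-(2p-1)$ claimed in the statement rather than a strictly weaker one. This seems to require exploiting the martingale-type structure of $R_{n+1}^{(2)}$ through a BDG-type argument parallel to the one in the proof of Theorem \ref{2024explicit-thm:main_convergence_result}, instead of the crude sum bound.
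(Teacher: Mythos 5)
Your proposal follows exactly the paper's route: the same one-step identity obtained by subtracting the scheme \eqref{2024explicit-eq:scheme} from \eqref{2024explicit-eq:SDE_rewrite}, the same use of the contractivity \eqref{2024explicit-eq:Ph_contractivity} and of Assumption \ref{2024explicit-ass:hat_mu(Ph)_bound} to reach $\vert X_{t_{n+1}}^{-1}-Y_{n+1}^{-1}\vert^{2p}\leq C h^{-2p}\big(\vert e_{n+1}\vert^{2p}+\vert e_n\vert^{2p}+\vert R_{n+1}\vert^{2p}\big)$, the same concluding triangle inequality, and the same appeal to Theorem \ref{2024explicit-thm:main_convergence_result} together with the crude bound $\E\big[\sup_{n}\vert R_n^{(i)}\vert^{2p}\big]\leq \tfrac{T}{h}\sup_{n}\E\big[\vert R_n^{(i)}\vert^{2p}\big]$. (Incidentally, both you and the paper need Assumption \ref{2024explicit-ass:hat_mu(Ph)_bound}, which is absent from the lemma's hypothesis list; it is required anyway to invoke Theorem \ref{2024explicit-thm:main_convergence_result}.)

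The bookkeeping obstacle you flag at the end is genuine, and it is not a defect of your reasoning but of the paper's claimed exponent. Carried out honestly, this route gives: from $Ch^{-2p}\,\E[\sup_n\vert e_n\vert^{2p}]$ and Theorem \ref{2024explicit-thm:main_convergence_result}, the contribution $Ch^{-4p}\sup_n\E[\vert R_n^{(1)}\vert^{2p}]+Ch^{-3p}\sup_n\E[\vert R_n^{(2)}\vert^{2p}]$; and from $Ch^{-2p}\,\E[\sup_n\vert R_n\vert^{2p}]$ together with sup-to-sum, the contribution $Ch^{-(2p+1)}\sup_n\E[\vert R_n\vert^{2p}]$. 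All of these exponents are strictly worse than $-(2p-1)$ when $p\geq 1$. The paper's own proof performs precisely these two steps, but records them with the prefactor $h^{-2p}$ silently dropped when Theorem \ref{2024explicit-thm:main_convergence_result} is applied, and with the factor $M=T/h$ from the sup-to-sum step counted as if it contributed $h^{+1}$ rather than $h^{-1}$; that is how $h^{-(2p-1)}$ appears. Your proposed repair, a BDG argument on $R^{(2)}$, cannot recover the exponent either: in this lemma each $R_n^{(2)}$ enters as an isolated one-step quantity, not as a partial sum of a martingale, so there is no maximal-function structure left to exploit --- it was already spent inside the proof of Theorem \ref{2024explicit-thm:main_convergence_result}. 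Fortunately the loss is immaterial: wherever the lemma is invoked (Heston-3/2 and A\"it-Sahalia), one has $\E[\vert R_n^{(1)}\vert^{2p}]=O(h^{4p})$ and $\E[\vert R_n^{(2)}\vert^{2p}]=O(h^{3p})$ up to inverse moments of $X$, so the honest prefactors $h^{-4p}$ and $h^{-3p}$ still yield a right-hand side bounded uniformly in $h$, which is all that the subsequent arguments use. In short, your argument, stated with the exponents it actually delivers, proves a slightly weaker but fully sufficient version of the lemma, and it coincides in substance with the paper's proof.
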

\begin{proof}
Rewrite the scheme \eqref{2024explicit-eq:scheme} as
	\begin{align*}
		Y_{n+1}^{-1}
		& =
		\tfrac{1}{c_{-1} h} \Big[
		Y_{n+1} - \P(Y_n)
        -
        \hat\mu(\P(Y_n)) h
		-
		\sigma \Delta W_n
		\Big].
	\end{align*}
Correspondingly, rewrite the discrete form of the transformed SDE \eqref{2024explicit-eq:SDE_rewrite} as
	\begin{align*}
		X_{t_{n+1}}^{-1}
		& =
		\tfrac{1}{c_{-1} h} \Big[
		X_{t_{n+1}} - \P(X_{t_{n}})
        -
        \hat\mu(\P(X_{t_n})) h
		-
		\sigma \Delta W_n
		-
		R_{n+1}
		\Big].
	\end{align*}
By subtracting the above two equations, one obtains
\begin{equation}
\begin{split}
    X_{t_{n+1}}^{-1} - Y_{n+1}^{-1}
    =
    \tfrac{1}{c_{-1} h} \Big[
		e_{n+1}
        -
        \big( \P(X_{t_n}) - \P(Y_n) \big)
        -
        \big( \hat\mu(\P(X_{t_n})) - \hat\mu(\P(Y_n)) \big) h
        -
        R_{n+1}
	\Big].
\end{split}
\end{equation}
Raise both sides to the power of $2p\,(p \geq 1)$.
The Jensen inequality, inequality \eqref{2024explicit-eq:Ph_contractivity} and Assumption \ref{2024explicit-ass:hat_mu(Ph)_bound} infer that
\begin{equation}
\begin{split}
    \Big\vert X_{t_{n+1}}^{-1} - Y_{n+1}^{-1} \Big\vert^{2p}
    & \leq
    C \tfrac{1}{h^{2p}} \vert e_{n+1} \vert^{2p}
    +
    C \big\vert \hat\mu(\P(X_{t_n})) - \hat\mu(\P(Y_n)) \big\vert^{2p}
    +
    C \tfrac{1}{h^{2p}} \big\vert R_{n+1} \big\vert^{2p}\\
    & \leq 
    C \tfrac{1}{h^{2p}} \vert e_{n+1} \vert^{2p}
    +
    C \tfrac{1}{h^{p}} \vert e_n \vert^{2p}
    +
    C \tfrac{1}{h^{2p}} \big\vert R_{n+1} \big\vert^{2p}.
\end{split}
\end{equation}
Taking supremum and expectation as well as applying Theorem \ref{2024explicit-thm:main_convergence_result} lead to
\begin{align*}
    \E \Bigg[
	\sup_{n = 0,...,M}
    \Big\vert X_{t_{n}}^{-1} - Y_{n}^{-1} \Big\vert^{2p}
    \Bigg]
    & \leq 
	C \tfrac{1}{h^{2p}}  
    \E \bigg[
    \sup_{n = 0,...,M} 
    \vert e_n \vert^{2 p}
    \bigg]
    +
    C \tfrac{1}{h^{2 p}}
    \E \bigg[
    \sup_{n = 1,...,M} 
    \big\vert R_{n} \big\vert^{2p}
    \bigg]\\
    & \leq
    C \tfrac{1}{h^{2 p}}
    \sup_{n = 1,...,M}
    \E \Big[ \big\vert R_{n}^{(1)} \big\vert^{2 p} \Big]
    +
    \tfrac{1}{h^{p}}
    \sup_{n = 1,...,M}
    \E \Big[
    \big\vert  R_{n}^{(2)} \big\vert^{2 p}
    \Big]\\
    & \quad
    +
    C \tfrac{1}{h^{2 p}}
    \E \bigg[
    \sup_{n = 1,...,M} 
    \big\vert R_{n}^{(1)} \big\vert^{2 p}
    \bigg]
    +
    C \tfrac{1}{h^{2 p}}
    \E \bigg[
    \sup_{n = 1,...,M} 
    \big\vert R_{n}^{(2)} \big\vert^{2 p}
    \bigg],
\end{align*}
for $1 \leq p \leq - \tfrac{p^*}{2 \deg^-(\mu)} \wedge \tfrac{p^*}{2m_1}$, where the term $\sup\limits_{n = 1,...,M} \vert R_{n} \vert^{2p}$ is splitted again using the Jensen inequality.
Observing
$\sup\limits_{ 0\leq n \leq M} \vert a_n \vert \leq \sum\limits_{n=0}^M \vert a_n \vert$
one obtains
\begin{align*}
    \E \Bigg[
	\sup_{n = 0,...,M}
    \Big\vert X_{t_{n}}^{-1} - Y_{n}^{-1} \Big\vert^{2p}
    \Bigg]
    & \leq
    C \tfrac{1}{h^{2 p - 1}}
    \Bigg(
    \sup_{n = 1,...,M}
    \E \Big[ \big\vert R_{n}^{(1)} \big\vert^{2 p} \Big]
    +
    \sup_{n = 1,...,M}
    \E \Big[ \big\vert R_{n}^{(2)} \big\vert^{2 p} \Big]
    \Bigg).
\end{align*}
The assertion follows now by the triangle inequality.
\end{proof}
The following lemma is provided to bound $\E \Big[
	      \sup_{t \in [0,T]}
            \big\vert X_t \big\vert^{-2p}
        \Big]$.
The proof is a slight modification of \cite[Lemma 2.13]{neuenkirch2014first} and follows the same lines as the original, so we omit the proof.
\begin{lemma}\label{2024explicit-thm:inverse_moment_bound_X}
Let Assumptions 
\ref{2024explicit-ass:solution_existence_and_uniqueness}, 
\ref{2024explicit-ass:mu_one_sided_Lipschitz}
and
\ref{2024explicit-ass:inverse_moment_boundedness_of_analytical_solution}
stand.
For any $p \geq 1$, there exists a constant $C_p>0$ such that
		\begin{align}
			\E \bigg[
			\sup_{t \in [0,T]}
			\vert X_t\vert ^{-2 p}
			\bigg]
			\leq
			C_p\left( 1+ \sup_{t \in [0,T]} \E \bigg[
			\vert X_t\vert ^{-(2 p + 2)}
			\bigg] \right).
		\end{align}
	\end{lemma}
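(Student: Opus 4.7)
The plan is to apply Itô's formula to the process $X_t^{-2p}$, which is well-defined since $X_t > 0$ almost surely by Assumption \ref{2024explicit-ass:solution_existence_and_uniqueness}. Since the transformed SDE \eqref{2024explicit-eq:transformed_SDE} has additive noise, the function $x \mapsto x^{-2p}$ applied via Itô yields
\begin{equation*}
X_t^{-2p} = X_0^{-2p} - 2p \int_0^t X_s^{-2p-1} \mu(X_s)\, ds + p(2p+1)\sigma^2 \int_0^t X_s^{-2p-2}\, ds - 2p\sigma \int_0^t X_s^{-2p-1}\, dW_s.
\end{equation*}
A standard localization by stopping times $\tau_n = \inf\{t > 0 : X_t \leq 1/n\}$ (with $\tau_n \to \infty$ a.s.\ by Assumption \ref{2024explicit-ass:solution_existence_and_uniqueness}), followed by Fatou's lemma, can be used to justify this step and the subsequent BDG estimate rigorously.

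Next I would take supremum over $t\in[0,T]$ and expectation, estimating each term separately. The Itô correction term contributes $p(2p+1)\sigma^2 \int_0^T X_s^{-2p-2}\, ds$; after Fubini, its expectation is bounded by $CT \sup_{t\in[0,T]} \E[X_t^{-(2p+2)}]$, matching exactly the right-hand side of the target. For the drift term, I would use Lemma \ref{2024explicit-lem:deg_mu}, which states $\coeff^-(\mu) > 0$ and $\deg^-(\mu) \leq -1$: the dominant behavior of $-X_s^{-2p-1}\mu(X_s)$ as $X_s \to 0^+$ takes the form $-\coeff^-(\mu)\, X_s^{\deg^-(\mu)-2p-1}$, which is nonpositive and therefore helpful when taking an upper bound. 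The remaining finitely many terms of the FLP expansion are each of the form $a_n X_s^{s_n - 2p - 1}$ with exponent bounded above by $\deg^+(\mu) - 2p - 1$; by Young's inequality, each such term can be split into a constant plus a small multiple of $X_s^{-(2p+2)}$ plus a positive power of $X_s$ whose expectation is uniformly bounded by Lemma \ref{2024explicit-lem:X_integrability}.

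For the martingale term, the Burkholder-Davis-Gundy inequality gives
\begin{equation*}
\E\bigg[\sup_{t \in [0,T]} \bigg\vert 2p\sigma \int_0^t X_s^{-2p-1}\, dW_s \bigg\vert\bigg] \leq C\, \E\bigg[\bigg(\int_0^T X_s^{-(4p+2)}\, ds\bigg)^{1/2}\bigg].
\end{equation*}
Factoring $X_s^{-(4p+2)} = X_s^{-2p} \cdot X_s^{-(2p+2)}$ inside the integral and applying the elementary inequality $\sqrt{ab} \leq \tfrac{1}{2\epsilon} a + \tfrac{\epsilon}{2} b$ with $a = \sup_{s\in[0,T]} X_s^{-2p}$ and $b = \int_0^T X_s^{-(2p+2)}\, ds$, I would obtain a bound of the form $\tfrac{C}{2\epsilon}\E[\sup_{t\in[0,T]} X_t^{-2p}] + \tfrac{C\epsilon T}{2}\sup_{t\in[0,T]}\E[X_t^{-(2p+2)}]$. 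Choosing $\epsilon$ small enough to absorb the first piece into the left-hand side yields the claim.

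The main obstacle I anticipate is the careful bookkeeping of the finitely many FLP terms in the drift expansion: one must confirm that \emph{all} non-dominant contributions to $-X_s^{-2p-1}\mu(X_s)$ can indeed be controlled either by the target quantity $\sup_t \E[X_t^{-(2p+2)}]$ or by already-established positive moment bounds, exploiting the sign structure $\coeff^-(\mu) > 0$ in a nontrivial way. Beyond this, the localization to justify both Itô's formula and the BDG step is slightly delicate but entirely standard, and the rest reduces to the Fubini and Young arguments above.
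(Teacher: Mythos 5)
Your proposal is correct and is essentially the paper's own proof: the paper omits the argument, citing \cite[Lemma 2.13]{neuenkirch2014first}, whose proof is precisely this application of It\^o's formula to $x^{-2p}$ with localization by stopping times, structural control of the drift using the positivity of the singular part of $\mu$ (plus polynomial-growth bounds and Lemma \ref{2024explicit-lem:X_integrability} away from the origin), Fubini for the It\^o correction term, and a Burkholder--Davis--Gundy plus Young absorption of the martingale supremum into the left-hand side. One cosmetic remark: with your parametrization $\sqrt{ab}\le\tfrac{1}{2\epsilon}a+\tfrac{\epsilon}{2}b$, absorbing the supremum term requires taking $\epsilon$ \emph{large} (so that $\tfrac{1}{2\epsilon}$ times the BDG constant and the prefactor $2p\sigma$ is below $1$), not small.
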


\section{Applications to financial models} \label{2024explicit-sec:examples}

In this section, we apply our scheme to a variety of financial models. 
The analysis will proceed as follows:
\begin{enumerate}
    \item Apply the appropriate Lamperti-type transformation to the model to obtain the transformed SDE.
    \item Verify whether the assumptions imposed on the SDE model are satisfied.
    \item Choose a proper correction function $\P$ and verify that it satisfies Assumptions \ref{2024explicit-ass:Ph_properties} and \ref{2024explicit-ass:hat_mu(Ph)_bound}.
    Apply the scheme \eqref{2024explicit-eq:scheme} to the transformed SDE. 
    Estimate the remainder terms $R_n$ and present convergence results with respect to the transformed SDE.
    \item Apply the inverse Lamperti transformation and derive the convergence results for the original SDE model.
\end{enumerate}
In the second step, specifically, Assumption \ref{2024explicit-ass:solution_existence_and_uniqueness} will be verified using the classical Feller test; 
Assumption \ref{2024explicit-ass:mu_one_sided_Lipschitz} will be confirmed with the help of Lemma \ref{2024explicit-lem:FLP_properties}; 
Assumption \ref{2024explicit-ass:inverse_moment_boundedness_of_analytical_solution} can be validated based on the results from existing literature.

\subsection{CIR process}

The Cox-Ingersoll-Ross (CIR) process, described by
\begin{equation}
\label{2024explicit-eq:CIR_original}
\begin{cases}
    \d \mathbb{X}_t 
        = 
        \kappa (\theta - \mathbb{X}_t) \, \d t 
        + 
        \hat\sigma \sqrt{\mathbb{X}_t} \, \d W_t,
        \quad
        t \in (0, T],\\
        \mathbb{X}_0 = x_0 > 0,
\end{cases}
\end{equation}
first introduced by Feller \cite{Feller}
and further developed by Cox, Ingersoll and Ross \cite{cox2005theory}, is now widely used in financial modeling.
By applying the Lamperti-type transformation
$\mathbb{L}: (0,+\infty) \rightarrow (0,+\infty)$
of the form
$\mathbb{L}(x) := 2 \sqrt{x}$,  
we obtain the following transformed SDE:
\begin{align} \label{eq:CIR_transformed}
    \d X_t
    =
    \mu(X_t) \d t + \sigma \d W_t, 
\end{align}
where $\sigma=\hat\sigma$ and
\begin{align}\label{2024explicit-eq:CIR_mu_definition}
    \mu(x)
    & =
    2 \big(
       \kappa \theta
        -
        \tfrac{\sigma^2 }{4}
    \big)
    x^{-1}
    -  
    \tfrac{\kappa}{2} x.
\end{align}
Recall the definition of $\hat\mu$ in \eqref{2024explicit-eq:hat_mu_definition} and Remark \ref{2024explicit-remark_c-1}.
Setting
$$c_{-1}
    =
    2 \big(
       \kappa \theta
        -
        \tfrac{\sigma^2 }{4}
    \big),$$
gives
\begin{equation}\label{2024explicit-eq:CIR_hat_mu_c-1_definition}
    \hat\mu(x)
    =
    -  
    \tfrac{\kappa}{2} x,
\end{equation}
which satisfies Assumption \ref{2024explicit-eq:hat_mu_one_sided_Lipschitz}.
Note that $\mu$ and $\hat\mu$ are FLPs with 
\begin{align*}
    \deg^{-}(\mu) &= -1,&
    \deg^{+}(\mu) &= 1,\\
    \deg^{-}(\hat\mu) &= 1,&
    \deg^{+}(\hat\mu) &= 1,\\
    \coeff^{-}(\mu) &= 
\tfrac{1}{2}
\big(
    4 \kappa \theta -  \sigma^2
\big),&
    \coeff^{+}(\mu) &= -\tfrac{\kappa}{2}.
\end{align*}
Next we verify the assumptions imposed on the SDE model. 
\begin{enumerate}[(1)]
    \item The Feller test ensures that Assumption \ref{2024explicit-ass:solution_existence_and_uniqueness} holds with $2 \kappa \theta \geq  \hat\sigma^2$.

    \item Under the condition $2 \kappa \theta \geq \sigma^2 $, Assumption \ref{2024explicit-ass:mu_one_sided_Lipschitz} can be easily confirmed by observing that
    \begin{align*}
        \mu'(x) 
        =
        - 2( \kappa \theta - \tfrac{\sigma^2}{4} ) x^{-2}
        - \tfrac{\kappa}{2}
        < 0.
    \end{align*}

    \item It is known from \cite{dereich2012euler} that the CIR process satisfies
    \begin{align}\label{cir-inv-moments}
        \sup_{t \in [0,T]} 
        \mathbb{E} \big[ \vert \mathbb{X}_t \vert^p \big]
        <
        + \infty 
        \quad
        \textrm{if} 
        \quad 
        p 
        > 
        -\frac{2\kappa \theta}{\hat\sigma^2}. 
    \end{align} 
    Consequently, one has
    \begin{align} \label{2024explicit-eq:cir-inv-moments_lamperti}
        \sup_{t \in [0,T]} 
        \mathbb{E} \Big[ \vert {X}_t \vert^{p} \Big]
        <
        +\infty
        \quad 
        \textrm{if}
        \quad 
        p > -\frac{4\kappa \theta}{\hat\sigma^2} .
    \end{align} 
    Thus Assumption \ref{2024explicit-ass:inverse_moment_boundedness_of_analytical_solution} holds with $p^* = \tfrac{4 \kappa \theta}{\hat\sigma^2}- \epsilon$ for an arbitrarily small positive constant $\epsilon$.
    Note that $p^*>2 = - 2 \deg^-(\mu)$, which allows $p$ to live in $[1,p^*]$, thereby satisfying a necessary condition for Lemma \ref{2024explicit-lem:Rn_bounds} and Theorem \ref{2024explicit-thm:main_convergence_result}.
\end{enumerate}

{\color{black}
Since $\hat\mu$ is linear, one may choose $\P = I$, i.e., the identity mapping, thereby ensuring that Assumptions \ref{2024explicit-ass:Ph_properties} and \ref{2024explicit-ass:hat_mu(Ph)_bound} are automatically satisfied with any constants $m_1,m_2$ (from Assumption~\ref{2024explicit-ass:Ph_properties}). In particular, setting $m_1 = 0$ removes the restriction  $p \leq \frac{p^*}{2m_1}$ in Theorem \ref{2024explicit-thm:main_convergence_result}.
}
The scheme \eqref{2024explicit-eq:scheme} now read as
\begin{equation}\label{2024explicit-eq:scheme_for_CIR}
    Y_{n+1}
    =
    Y_n
    +
    2 \big(\kappa \theta - \tfrac{\sigma^2}{4}\big) h Y_{n+1}^{-1}
    - 
    \tfrac{1}{2} \kappa h Y_n
    +
    \sigma \Delta W_n.
\end{equation}
Recall that
\begin{equation*}
\begin{aligned}
    R_{n+1}
    & =
    X_{t_n} - \P (X_{t_n})
    +
    \int_{t_n}^{t_{n+1}}
        \mu(X_s) 
    \d s
    -
    c_{-1} h X_{t_{n+1}}^{-1}
    -
    \hat \mu(\P(X_{t_n})) h\\
    & =
    \int_{t_n}^{t_{n+1}}
    \Big(
        -\tfrac{\kappa}{2} X_s + c_{-1} X_s^{-1} 
    \Big)
    \d s
    -
    c_{-1} h X_{t_{n+1}}^{-1}
    +
    \tfrac{\kappa}{2} X_{t_n} h.
\end{aligned}
\end{equation*}
{
The It\^o formula gives
\begin{equation}
\begin{aligned}
    R_{n+1}
    & =
    -\tfrac{\kappa}{2}
    \int_{t_n}^{t_{n+1}}
    \bigg(
        \int_{t_n}^s
            \mu(X_r)
        \d r
        +
        \int_{t_n}^s
            \sigma
        \d W_r
    \bigg)
    \d s\\
    & \quad
    +
    c_{-1} 
    \int_{t_n}^{t_{n+1}}
    \int_{t_{n+1}}^s
    \Big(
        - X_r^{-2}
        \mu(X_r)
        +
        \sigma^2
        X_r^{-3}
    \Big)
    \d r
    \d s
    +
    c_{-1}
    \int_{t_n}^{t_{n+1}}
    \int_{t_{n+1}}^s
        - X_r^{-2}
        \sigma
    \d W_r
    \d s\\
    & =
    \underbrace{
    -\tfrac{\kappa}{2}
    \int_{t_n}^{t_{n+1}}
        \int_{t_n}^s
            \mu(X_r)
        \d r
    \d s
    +
    c_{-1} 
    \int_{t_n}^{t_{n+1}}
    \int_{t_{n+1}}^s
    \Big(
        - X_r^{-2}
        \mu(X_r)
        +
        \sigma^2
        X_r^{-3}
    \Big)
    \d r
    \d s
    }_{: = R_{n+1}^{(1)}}\\
    & \quad
    \underbrace{
    - \tfrac{\kappa}{2}
    \int_{t_n}^{t_{n+1}}
        \int_{t_n}^s
            \sigma
        \d W_r
    \d s
    +
    c_{-1}
    \int_{t_n}^{t_{n+1}}
    \int_{t_{n+1}}^s
        - X_r^{-2}
        \sigma
    \d W_r
    \d s
    }_{: = R_{n+1}^{(2)}}.
\end{aligned}
\end{equation}
}
The Jensen inequality together with the H\"older inequality reveals that for $p\geq 1$, 
\begin{equation}
\begin{aligned}
    \E \Big[ \big\vert R_{n+1}^{(1)} \big\vert^{2p} \Big]
    & \leq
    2^{2p-1}
    \E\Bigg[
    \bigg\vert 
        -\tfrac{\kappa}{2}
        \int_{t_n}^{t_{n+1}}
        \int_{t_n}^s
            \mu(X_r)
        \d r
        \d s
    \bigg\vert^{2p}
    \Bigg]\\
    & \quad +
    2^{2p-1}
    \E\Bigg[
    \bigg\vert 
        c_{-1} 
        \int_{t_n}^{t_{n+1}}
        \int_{t_{n+1}}^s
        \Big(
            - X_r^{-2}
            \mu(X_r)
            +
            \sigma^2
            X_r^{-3}
        \Big)
        \d r
        \d s
    \bigg\vert^{2p}
    \Bigg]\\
    & \leq
    \half \kappa^{2p} h^{4p-2}
    \E\Bigg[
        \int_{t_n}^{t_{n+1}}
        \int_{t_n}^s
            \big\vert
            \mu(X_r)
            \big\vert^{2p}
        \d r
        \d s
    \Bigg]\\
    & \quad +
    2^{2p-1} c_{-1}^{2p} h^{4p - 2}
    \E\Bigg[
        \int_{t_n}^{t_{n+1}}
        \int_{t_{n+1}}^s
        \Big\vert
            - X_r^{-2}
            \mu(X_r)
            +
            \sigma^2
            X_r^{-3}
        \Big\vert^{2p}
        \d r
        \d s
    \Bigg].
\end{aligned}
\end{equation}
Thus Lemma \ref{2024explicit-lem:X_integrability} and  \eqref{2024explicit-eq:cir-inv-moments_lamperti} imply that for any $6 p \leq p^* = \tfrac{4 \kappa \theta}{\hat\sigma^2} - \epsilon$, i.e., $1 \leq p < \tfrac{2 \kappa \theta}{3 \hat\sigma^2}$, one obtains
\begin{align}\label{2024explicit-eq:CIR_Rn_1_estimate}
    \E\Big[\big\vert R_{n+1}^{(1)} \big\vert^{2p}\Big]
    & \leq
    C h^{4p} 
    \cdot
    \sup_{0 \leq r \leq T}
    \E\Big[\big\vert X_r \big\vert^{\min\{-6 p,\,  2 p \deg^-(\mu)\}}\Big]\\
    \nonumber
    & \leq
    C h^{4p}.
\end{align}
Concerning $R_{n+1}^{(2)}$, the Jensen inequality implies
\begin{equation*}
\begin{aligned}
    \E\bigg[ \big\vert R_{n+1}^{(2)} \big\vert^{2p} \bigg]
    & \leq
    2^{2p - 1}
    \E \Bigg[
    \bigg\vert
        \int_{t_n}^{t_{n+1}}
        \int_{t_n}^s
        \sigma
        \d W_r
        \d s
    \bigg\vert^{2p}
    \Bigg]\\
    & \quad
    +
    2^{2p - 1}
    \E \Bigg[
    \bigg\vert
        c_{-1}
        \int_{t_n}^{t_{n+1}}
        \int_{t_{n+1}}^s
        - X_r^{-2} \sigma
        \d W_r
        \d s
    \bigg\vert^{2p}
    \Bigg].
\end{aligned}
\end{equation*}
Applying the H\"older inequality as well as the moment inequality \cite[Theorem 7.1]{mao2007stochastic} results in
\begin{equation*}
\begin{aligned}
    \E\Big[ \big\vert R_{n+1}^{(2)} \big\vert^{2p} \Big]
    & \leq
    2^{2p - 1} h^{3p - 2} C_p
    \E \Bigg[
        \int_{t_n}^{t_{n+1}}
        \int_{t_n}^s
        \sigma^{2p}
        \d r
        \d s
    \Bigg]\\
    & \quad
    +
    2^{2p - 1} c_{-1}^{2p} h^{3p-2} C_p
    \E \Bigg[
        \int_{t_n}^{t_{n+1}}
        \int_{t_{n+1}}^s
        \big\vert
        - X_r^{-2} \sigma
        \big\vert^{2p}
        \d r
        \d s
    \Bigg].
\end{aligned}
\end{equation*}
Thus  Assumption \ref{2024explicit-ass:inverse_moment_boundedness_of_analytical_solution} implies that for any $4p \leq p^* = \tfrac{4 \kappa \theta}{\hat\sigma^2} - \epsilon$, i.e., $1 \leq p < \tfrac{ \kappa \theta}{\hat\sigma^2} $, 
one obtains
\begin{align}\label{2024explicit-eq:CIR_Rn_2_estimate}
    \E\Big[
    \big\vert R_{n+1}^{(2)} \big\vert^{2p}
    \Big]
    & \leq
    C h^{3p} 
    \cdot
    \sup_{0 \leq r \leq T}
    \E\Big[\big\vert X_r \big\vert^{- 4 p}\Big]\\
    \nonumber
    & \leq
    C h^{3p}.
\end{align}
Furthermore, Theorem \ref{2024explicit-thm:main_convergence_result} gives for $1 \leq p \leq - \tfrac{p^*}{2 \deg^-(\mu)} \wedge \tfrac{p^*}{2m_1} = \tfrac{2\kappa \theta}{\hat\sigma^2} - \epsilon$
\begin{equation*}
    \begin{split}
        \E\Big[ \sup_{n=0,...,M} \big\vert e_{n} \big\vert^{2p} \Big]
        & \leq
        2 C \text{e}^{2T}
        \Bigg(
            \tfrac{1}{h^{2p}}
            \sup_{n = 1,...,M}
            \E \Big[ \big\vert R_{n}^{(1)} \big\vert^{2p} \Big]
            +
            \tfrac{1}{h^{p}}
            \sup_{n = 1,...,M}
            \E \bigg[
            \big\vert  R_{n}^{(2)} \big\vert^{2p}
            \bigg] 
        \Bigg).
    \end{split}
\end{equation*}
Therefore, combining \eqref{2024explicit-eq:CIR_Rn_1_estimate} and \eqref{2024explicit-eq:CIR_Rn_2_estimate} one obtains for $1 \leq p < \tfrac{2 \kappa \theta}{3 \hat\sigma^2}$
\begin{equation}
    \begin{split}
        \E\Big[ \sup_{n=0,...,M} \big\vert e_{n} \big\vert^{2p} \Big]
        & \leq
        C h^{2p}
    \end{split}.
\end{equation}
Finally, with the aid of Lemma \ref{2024explicit-lem:X_integrability} and Lemma \ref{2024explicit-lem:Y_integrability}, transforming back yields that for any $1 \leq p < \tfrac{2 \kappa \theta}{3 \hat\sigma^2}$,
\begin{align*}
\nonumber
    & \E \Big[
        \sup_{n = 0,...,M} 
        \big\vert  \mathbb{X}_{t_n} - \mathbb{Y}_n \big\vert ^{2 p}
    \Big]
    = \frac{1}{4^{2p}}
    \E \Big[
        \sup_{n = 0,...,M} 
        \big\vert  
           X_{t_n}^2 
            - 
            Y_n^2 
        \big\vert ^{2 p}
    \Big]\\
\nonumber
    & \leq
    C \cdot \Bigg( \E \Big[
        \sup_{n = 0,...,M} 
        \big\vert  X_{t_n} + Y_n \big\vert ^{2 p\tfrac{1 + \epsilon}{\epsilon}}
    \Big] \Bigg)^{\tfrac{\epsilon}{1 + \epsilon}}
    \times
    \Bigg( \E \Big[
        \sup_{n = 0,...,M} 
        \big\vert  X_{t_n} - Y_n \big\vert ^{2 p(1 + \epsilon)}
    \Big] \Bigg)^{\tfrac{1}{1 + \epsilon}}\\
    & \leq 
    C h^{2 p}.
\end{align*}
\begin{proposition}\label{2024explicit-prop:CIR}
Let $2 \kappa \theta > \hat\sigma^2$.
For any $1 \leq p < \tfrac{2 \kappa \theta}{3 \hat\sigma^2}$, the proposed scheme is $2p$-strongly convergent with order $1$ for the CIR process.
\end{proposition}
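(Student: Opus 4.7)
The plan is to follow the four-step program outlined at the start of Section \ref{2024explicit-sec:examples}: apply the Lamperti transformation, verify the standing assumptions for the transformed SDE, estimate the local remainder terms, and transform back to the CIR variable.

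First I would apply $\mathbb{L}(x) = 2\sqrt{x}$, which converts \eqref{2024explicit-eq:CIR_original} into the transformed SDE with additive noise and FLP drift $\mu(x) = 2(\kappa\theta - \sigma^2/4)x^{-1} - (\kappa/2)x$. Selecting $c_{-1} = 2(\kappa\theta - \sigma^2/4) > 0$ (positive thanks to the Feller condition) leaves $\hat\mu(x) = -(\kappa/2)x$ linear, which trivially satisfies Assumption \ref{2024explicit-ass:hat_mu_one_sided_Lipschitz}. The linearity of $\hat\mu$ permits the choice $\mathcal{P}_h = I$, making Assumptions \ref{2024explicit-ass:Ph_properties} and \ref{2024explicit-ass:hat_mu(Ph)_bound} automatic with $m_1 = 0$, so the $\tfrac{p^*}{2m_1}$ constraint in Theorem \ref{2024explicit-thm:main_convergence_result} becomes vacuous. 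The Feller test gives Assumption \ref{2024explicit-ass:solution_existence_and_uniqueness} under $2\kappa\theta \geq \hat\sigma^2$, Assumption \ref{2024explicit-ass:mu_one_sided_Lipschitz} follows from $\mu' < 0$, and Assumption \ref{2024explicit-ass:inverse_moment_boundedness_of_analytical_solution} comes from \eqref{cir-inv-moments}, yielding $p^* = \tfrac{4\kappa\theta}{\hat\sigma^2} - \varepsilon$.

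Next I would decompose the local remainder $R_{n+1}$ by applying It\^o's formula to $X_s$ (inside the drift integral $\int_{t_n}^{t_{n+1}}(-\tfrac{\kappa}{2}X_s + c_{-1}X_s^{-1})\,ds$) and to $X_s^{-1}$ (expanding $X_{t_{n+1}}^{-1}$ backwards from $t_{n+1}$). Grouping the Riemann double integrals into $R_{n+1}^{(1)}$ and the stochastic double integrals into $R_{n+1}^{(2)}$, Jensen and H\"older on $R_{n+1}^{(1)}$ together with the moment inequality for stochastic integrals on $R_{n+1}^{(2)}$ yield $\E[|R_{n+1}^{(1)}|^{2p}] \leq Ch^{4p}$ and $\E[|R_{n+1}^{(2)}|^{2p}] \leq Ch^{3p}$. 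The main obstacle is the inverse-moment budget: the worst term in $R_{n+1}^{(1)}$ carries $X_r^{-3}$ (from It\^o applied to $X_r^{-1}$), so the $2p$-th moment bound needs $\E[|X_r|^{-6p}]<+\infty$; by \eqref{2024explicit-eq:cir-inv-moments_lamperti} this forces $6p < p^*$, i.e., $p < \tfrac{2\kappa\theta}{3\hat\sigma^2}$, which is precisely the range asserted in the proposition. The $R_{n+1}^{(2)}$ bound only requires $\E[|X_r|^{-4p}]<+\infty$, hence is looser; and the applicability of Theorem \ref{2024explicit-thm:main_convergence_result} itself only demands $p \leq p^*/2$, weaker still.

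Plugging these two local estimates into Theorem \ref{2024explicit-thm:main_convergence_result} then gives $\E[\sup_n |X_{t_n}-Y_n|^{2p}] \leq C h^{2p}$ on the transformed level. Finally, the inverse transformation $\mathbb{L}^{-1}(x) = x^2/4$ is only locally Lipschitz, so passage back to the CIR variable uses the factorization $\mathbb{X}_{t_n}-\mathbb{Y}_n = \tfrac{1}{4}(X_{t_n}-Y_n)(X_{t_n}+Y_n)$ combined with H\"older's inequality with a small $\varepsilon$-split: the $\tfrac{1+\varepsilon}{\varepsilon}$-power factor involving $X_{t_n}+Y_n$ is bounded uniformly in $h$ by the arbitrary-order moment bounds of Lemma \ref{2024explicit-lem:X_integrability} and Lemma \ref{2024explicit-lem:Y_integrability}, while the $(1+\varepsilon)$-power factor on $X_{t_n}-Y_n$ inherits the rate $h^{2p}$ upon replacing $p$ by $p(1+\varepsilon)$ in the transformed estimate, at the cost of an arbitrarily small shrinking of the admissible range. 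Since $\varepsilon$ can be chosen as small as desired, the result follows for all $p$ in $[1, \tfrac{2\kappa\theta}{3\hat\sigma^2})$.
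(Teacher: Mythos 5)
Your proposal is correct and follows essentially the same route as the paper's own argument: the same choice of $c_{-1}$ and $\mathcal{P}_h = I$, the same It\^o-based splitting of $R_{n+1}$ into Riemann double integrals ($R_{n+1}^{(1)}$, with the $X_r^{-3}$ term forcing $6p < p^*$ and hence $p < \tfrac{2\kappa\theta}{3\hat\sigma^2}$) and stochastic double integrals ($R_{n+1}^{(2)}$), followed by Theorem \ref{2024explicit-thm:main_convergence_result} and the H\"older $\epsilon$-split for the inverse transformation. Your explicit remark that the $(1+\varepsilon)$-power in the back-transformation step shrinks the admissible $p$-range only by an arbitrarily small amount is a point the paper glosses over, and it is handled correctly here.
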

We mention that,  as indicated by Proposition \ref{2024explicit-prop:CIR}, for the CIR model the proposed scheme achieves the first-order convergence under the same conditions as the Lamperti backward Euler-Maruyama (LBEM) scheme in \cite{neuenkirch2014first}. 

\subsection{Heston-3/2 volatility}

The Heston-3/2 volatility model, as introduced in \cite{heston1997simple}, is given by
\begin{align}\label{2024explicit-eq:Heston_original}
\begin{cases}
    \d \mathbb{X}_t
    =
    a_1 \mathbb{X}_t \big( a_2 - \mathbb{X}_t \big) \d t
    +
    a_3 \mathbb{X}_t ^{\frac{3}{2}} \, \d W_t,
    \quad
    t \in (0, T],\\
    \mathbb{X}_0 = x_0 > 0,
\end{cases}
\end{align}
where $a_1, a_2, a_3>0$.  
The model can be derived as the inverse of the CIR process and serves as an extension of the original Heston model.
By applying the Lamperti-type transformation
$\mathbb{L}: (0,+\infty) \rightarrow (0,+\infty)$
of the form
$\mathbb{L}(x) := 2x^{-\tfrac{1}{2}}$,  
one obtains the following SDE:
\begin{align} \label{eq:Heston_transformed}
    \d X_t
    =
    \mu(X_t) \d t + \sigma \d W_t, 
\end{align}
where $\sigma= - a_3$ and
\begin{align}\label{2024explicit-eq:Heston-3/2_mu_definition}
        \mu(x)
        =
        2(a_1 + \tfrac34 a_3^2) x^{-1}
        -
        \tfrac{a_1 a_2}{2} x.
\end{align}
As investigated in \cite{drimus2012options}, the Feller non-explosion condition is satisfied for $a_1 > 0$.
Note that \eqref{2024explicit-eq:Heston-3/2_mu_definition} reduces to \eqref{2024explicit-eq:CIR_mu_definition} when the parameters are set as follows:
    \begin{align}
        \tfrac{\kappa \theta}{\hat\sigma^2}
        =
        \tfrac{a_1}{a_3^2} + 1
        \quad
        \text{and}
        \quad
        \kappa = a_1 a_2.
    \end{align} 
Hence, using the same $c_{-1} = 2(a_1 + \tfrac34 a_3^2)$ and correction function $\P = I$, the results derived for the transformed CIR model \eqref{eq:CIR_transformed} can be directly applied by replacing the corresponding parameters.
More precisely,
Theorem \ref{2024explicit-thm:main_convergence_result} together with \eqref{2024explicit-eq:CIR_Rn_1_estimate} and \eqref{2024explicit-eq:CIR_Rn_2_estimate} gives
\begin{equation}\label{2024explicit-eq:Heston_en_estimate_by_X_inverse_moment}
    \begin{split}
        \E\Big[ \sup_{n=0,...,M} \big\vert e_{n} \big\vert^{2p} \Big]
        & \leq
        2 C \text{e}^{2T}
        \Bigg(
            \tfrac{1}{h^{2p}}
            \sup_{n = 1,...,M}
            \E \Big[ \big\vert R_{n}^{(1)} \big\vert^{2p} \Big]
            +
            \tfrac{1}{h^{p}}
            \sup_{n = 1,...,M}
            \E \bigg[
            \big\vert  R_{n}^{(2)} \big\vert^{2p}
            \bigg] 
        \Bigg)\\
        & \leq
        2 C \text{e}^{2T}
        \Bigg(
            C h^{2p} 
            \sup_{0 \leq r \leq T}
            \E\Big[\big\vert X_r \big\vert^{- 6 p}\Big]
            +
            C h^{2p} 
            \sup_{0 \leq r \leq T}
            \E\Big[\big\vert X_r \big\vert^{- 4 p}\Big]
        \Bigg)\\
        & \leq
        C h^{2p}
        \sup_{0 \leq r \leq T}
        \E\Big[\big\vert X_r \big\vert^{- 6 p}\Big]
    \end{split}
    \end{equation}
for any $1 \leq p < \tfrac{ 2 \kappa \theta}{ \hat\sigma^2} = 2 \Big( \tfrac{a_1}{a_3^2} + 1 \Big)$.
Transforming back yields
\begin{align*}
\nonumber
       & \E \Big[ \sup_{n = 0,...,M} \big \vert
            \mathbb{X}_{t_n} - \mathbb{Y}_n
        \big \vert^{2 p} \Big]
        \\ & \quad  =
      4^{2p}  \, \E \Bigg[ \sup_{n = 0,...,M} \bigg \vert
            X_{t_n}^{-2} 
            - 
            Y_n^{-2}
        \bigg \vert^{2 p} \Bigg]\\
\nonumber
        & \quad \leq
        C \cdot \E \Bigg[ \sup_{n = 0,...,M} 
            \Big( \big\vert  X_{t_n} \big\vert ^{-3} + \big\vert  Y_n \big\vert ^{-3} \Big)^{2 p}
            \big\vert  X_{t_n} - Y_n \big\vert ^{2 p}
        \Bigg]\\
\nonumber
        & \quad \leq
        C \cdot \Bigg( 
            \E \Big[ \sup_{n = 0,...,M} 
                \big\vert  X_{t_n} \big\vert ^{- 6 a p}
            \Big] ^{\frac1a}
            +
            \E \Big[ \sup_{n = 0,...,M} 
                \big\vert  Y_n \big\vert ^{-6 a p}
            \Big] ^{\frac1a}
        \Bigg) 
        \cdot
        \Bigg( \E \Big[ \sup_{n = 0,...,M} 
            \big\vert  X_{t_n} - Y_n \big\vert ^{2 b p}
        \Big] \Bigg) ^{\frac1b},
\end{align*}
where the H\"{o}lder inequality has been used in the last step with  $a,b > 1$ and $\tfrac1a + \tfrac1b = 1$.
For the inverse moment of the numerical solution,
Lemma \ref{2024explicit-lem:inverse_moment_bounds_Y} together with \eqref{2024explicit-eq:CIR_Rn_1_estimate} and \eqref{2024explicit-eq:CIR_Rn_2_estimate} infers that for any $1 \leq q < \tfrac{ 2 \kappa \theta}{ \hat\sigma^2} = 2 \Big( \tfrac{a_1}{a_3^2} + 1 \Big)$,
\begin{align*}
		\E \bigg[
	      \sup_{n = 0,...,M}
            \big\vert Y_n \big\vert^{-2q}
        \bigg]
        & \leq
        C \tfrac{1}{h^{2 q - 1}}
        \Bigg(
        \sup_{n = 0,...,M-1}
        \E \Big[ \big\vert R_{n+1}^{(1)} \big\vert^{2 q} \Big]
        +
        \sup_{n = 0,...,M-1}
        \E \Big[ \big\vert R_{n+1}^{(2)} \big\vert^{2 q} \Big]
        \Bigg)\\
        & \quad
        +
        C \E \bigg[
	      \sup_{t \in [0,T]}
            \big\vert X_t \big\vert^{-2q}
        \bigg]\\
        & \leq
        C
        \sup_{t \in [0,T]}
        \E \bigg[
            \big\vert X_t \big\vert^{-6q}
        \bigg]
        +
        C \E \bigg[
	      \sup_{t \in [0,T]}
            \big\vert X_t \big\vert^{-2q}
        \bigg].
	\end{align*}
Using Lemma \ref{2024explicit-thm:inverse_moment_bound_X} one can further deduce that
\begin{equation}
\begin{split}
    \E \bigg[
	      \sup_{n = 0,...,M}
            \big\vert Y_n \big\vert^{-2q}
    \bigg]
    & \leq
    C_q
    \left( 
        \sup_{t \in [0,T]}
        \E \bigg[
            \big\vert X_t \big\vert^{-6q}
        \bigg] 
        + 
        \sup_{t \in [0,T]} 
        \E \bigg[
			\vert X_t\vert ^{-(2 q + 2)}
		\bigg] 
   \right)\\
   & \leq
   C_q
        \sup_{t \in [0,T]}
        \E \bigg[
            \big\vert X_t \big\vert^{-6q}
        \bigg]
\end{split}
\end{equation}
for $1 \leq q < \tfrac{ 2 \kappa \theta}{ \hat\sigma^2} = 2 \Big( \tfrac{a_1}{a_3^2} + 1 \Big)$.
Recalling \eqref{2024explicit-eq:Heston_en_estimate_by_X_inverse_moment} one gets
\begin{equation}
\begin{aligned}
    \E& \Big[ \sup_{n=0,...,M} \big\vert \mathbb{X}_{t_n} - \mathbb{Y}_n \big\vert^{2p} \Big]\\
    & \leq
    C \Bigg(
        1 + 
        \sup_{t \in [0,T]}
        \E \Big[
            \big\vert X_t \big\vert^{-18ap}
        \Big] ^{\frac{1}{a}}
    \Bigg)
    \cdot
    \Bigg( 
        \E \Big[ \sup_{n = 0,...,M} 
            \big\vert  X_{t_n} - Y_n \big\vert ^{2 b p}
        \Big] \Bigg) ^{\frac1b}\\
    & \leq
    C \Bigg(
        1 + 
        \sup_{t \in [0,T]}
        \E \Big[
            \big\vert X_t \big\vert^{-18ap}
        \Big] ^{\frac{1}{a}}
    \Bigg)
    \cdot
    C h^{2p}
    \Bigg( 
        \sup_{t \in [0,T]}
        \E \Big[
            \big\vert X_t \big\vert^{-6bp}
        \Big] 
    \Bigg) ^{\frac1b}.
\end{aligned}
\end{equation}
By aligning the exponents to achieve the best possible values for $a$ and $b$, it follows that $a=\tfrac43$, $b = 4$.
For the CIR process, one knows by \eqref{2024explicit-eq:cir-inv-moments_lamperti} that
\begin{equation}
    \E \Big[ \sup_{n=0,...,M} \big\vert \mathbb{X}_{t_n} - \mathbb{Y}_n \big\vert^{2p} \Big]
    \leq
    C h^{2p},
    \quad
    \forall 1 \leq p < \tfrac{1}{6} \big( \tfrac{a_1}{a_3^2} + 1 \big).
\end{equation}
\begin{proposition}
\label{2024explicit-prop:32model}
For any $1 \leq p < \tfrac{1}{6}\big( \tfrac{a_1}{a_3^2} + 1 \big)$, the proposed scheme is $2p$-strongly convergent with order $1$ for the Heston-3/2 volatility model.
\end{proposition}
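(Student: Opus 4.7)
The plan is to reduce the analysis to the CIR case already handled in Proposition \ref{2024explicit-prop:CIR} via parameter matching, and then to push the resulting transformed-error estimate through the inverse Lamperti map. Under $\mathbb{L}(x)=2x^{-1/2}$ the transformed drift $\mu(x)=2(a_1+\tfrac{3}{4}a_3^2)x^{-1}-\tfrac{a_1 a_2}{2}x$ has precisely the FLP shape of the CIR transformed drift \eqref{2024explicit-eq:CIR_mu_definition} under the identification $\tfrac{\kappa\theta}{\hat\sigma^2}=\tfrac{a_1}{a_3^2}+1$ and $\kappa=a_1 a_2$. This lets me adopt the same choice $c_{-1}=2(a_1+\tfrac{3}{4}a_3^2)$ and the identity correction $\P=I$, so that the residual estimates \eqref{2024explicit-eq:CIR_Rn_1_estimate}--\eqref{2024explicit-eq:CIR_Rn_2_estimate} carry over and Theorem \ref{2024explicit-thm:main_convergence_result} yields $\E\big[\sup_n\vert e_n\vert^{2p}\big]\leq Ch^{2p}\sup_t\E\big[\vert X_t\vert^{-6p}\big]$ on the admissible range. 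The underlying Assumptions \ref{2024explicit-ass:solution_existence_and_uniqueness}--\ref{2024explicit-ass:hat_mu(Ph)_bound} are confirmed by the Feller condition $a_1>0$, Lemma \ref{2024explicit-lem:FLP_properties}, and the inverse-moment control inherited from \eqref{2024explicit-eq:cir-inv-moments_lamperti}.

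The genuine departure from the CIR case occurs when transforming back via $\mathbb{L}^{-1}(x)=4x^{-2}$, since the negative exponent makes the mean value theorem produce
\begin{equation*}
\big\vert \mathbb{L}^{-1}(x)-\mathbb{L}^{-1}(y) \big\vert
\leq
C\big(x^{-3}+y^{-3}\big)\vert x-y\vert,
\end{equation*}
so inverse moments of \emph{both} $X_{t_n}$ and $Y_n$ appear in the final error bound. I would therefore apply H\"older's inequality with conjugate exponents $a,b>1$ to decouple the cubed-inverse factors from the pathwise error, control the former via Lemma \ref{2024explicit-thm:inverse_moment_bound_X} for $X_t$ and Lemma \ref{2024explicit-lem:inverse_moment_bounds_Y} for $Y_n$, and apply the transformed error estimate with exponent $2pb$ to the latter.

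The main obstacle will be tracking the admissible range of $p$, since each of Lemma \ref{2024explicit-lem:inverse_moment_bounds_Y}, the residual estimates, and the H\"older split inflates the inverse-moment order demanded of $X_t$. After the H\"older step, control of $\E\big[\vert Y_n\vert^{-6pa}\big]$ reduces via Lemma \ref{2024explicit-lem:inverse_moment_bounds_Y} together with the CIR residual bounds to controlling $\E\big[\vert X_t\vert^{-18pa}\big]$, while the transformed-error contribution imposes the constraint on $\E\big[\vert X_t\vert^{-6pb}\big]$. Balancing the two budgets via $18a=6b$ together with $\tfrac{1}{a}+\tfrac{1}{b}=1$ forces $a=\tfrac{4}{3}$, $b=4$, at which point both reduce to the single requirement $24p<4\big(\tfrac{a_1}{a_3^2}+1\big)$ allowed by \eqref{2024explicit-eq:cir-inv-moments_lamperti}. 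This produces exactly the declared range $p<\tfrac{1}{6}\big(\tfrac{a_1}{a_3^2}+1\big)$ and the order-one strong convergence.
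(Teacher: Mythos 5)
Your proposal is correct and follows essentially the same route as the paper's proof: the same CIR parameter matching with $c_{-1}=2(a_1+\tfrac34 a_3^2)$ and $\P=I$, the same reuse of the CIR residual estimates in Theorem \ref{2024explicit-thm:main_convergence_result}, the same mean-value/H\"older treatment of the inverse map $\mathbb{L}^{-1}(x)=4x^{-2}$ with Lemmas \ref{2024explicit-lem:inverse_moment_bounds_Y} and \ref{2024explicit-thm:inverse_moment_bound_X}, and the same exponent balancing $a=\tfrac43$, $b=4$ yielding the constraint $24p<4\big(\tfrac{a_1}{a_3^2}+1\big)$. No gaps to report.
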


\subsection{CEV Process}

The constant elasticity of variance (CEV) model describes a stochastic process that provides a flexible framework for modeling asset prices, governed by the following SDE:
\begin{align}\label{2024explicit-eq:CEV_original}
    \d \mathbb{X}_t
    =
    \kappa (\theta - \mathbb{X}_t) \d t
    +
    \hat\sigma \, \mathbb{X}_t ^ {d}\, \d W_t, \quad t \in (0, T], \qquad \mathbb{X}_0 = x_0 >0,
\end{align}
where $0.5 < d < 1$ and $\kappa, \theta, \hat\sigma >0$. 
The model is introduced by Cox \cite{cox1996constant}, offering a more adaptable approach to asset price modeling. 

By applying the Lamperti-type transformation
$\mathbb{L}: (0,+\infty) \rightarrow (0,+\infty)$
of the form
$\mathbb{L}(x) := \tfrac{1}{1-d} x^{1 - d}$, 
one obtains the following SDE:
\begin{align}
        \d X_t
        =
        \mu(X_t) \d t
        +
        \sigma \d W_t
\end{align}
where $\sigma = \hat\sigma$ and
\begin{align}
        \mu(x)
        &=
            \kappa \theta \left((1 - d) x\right)^{- \frac{d}{1 - d} }
            -
            \tfrac{d \sigma^2}{2} \left((1 - d) x\right)^{-1}
            -
            \kappa \left((1 - d) x\right).
\end{align}
Recall Remark \ref{2024explicit-remark_c-1} and set $c_{-1}  = \tfrac{d \sigma^2}{2(1 - d)}$.
Thus
\begin{equation}\label{2024explicit-eq:CEV_hat_mu_c-1_definition}
    \hat\mu(x)
    =
    \mu(x) - c_{-1}x^{-1}
    =
    \kappa \theta \left((1 - d) x\right)^{- \frac{d}{1 - d} }
    -
    d \sigma^2 \left((1 - d) x\right)^{-1}
    -
    \kappa \left((1 - d) x\right).
\end{equation}
Note that $\mu$ and $\hat\mu$ are FLPs with 
\begin{align*}
    \deg^{-}(\mu) &= -\tfrac{d}{1-d},&
    \deg^{+}(\mu) &= 1,\\
    \deg^{-}(\hat\mu) &= -\tfrac{d}{1-d},&
    \deg^{+}(\hat\mu) &= 1,\\
    \coeff^{-}(\mu) &= \kappa\theta (1-d)^{-\frac{d}{1-d}},&
    \coeff^{+}(\mu) &= -\kappa (1-d).
\end{align*}
Next we verify the Assumptions imposed on the SDE model.
\begin{enumerate}
    \item The Feller test ensures that Assumption \ref{2024explicit-ass:solution_existence_and_uniqueness} holds with $D = (0,+\infty)$.

    \item Assumption \ref{2024explicit-ass:mu_one_sided_Lipschitz} can be easily confirmed by observing that
    ${\deg}^{-}(\mu) < 0 < {\deg}^{+}(\mu)$
    and
    $\coeff^+(\mu) < 0 < \coeff^-(\mu)$,
    thanks to Lemma \ref{2024explicit-lem:FLP_properties}.

    \item From \cite{berkaoui2008euler}, it is known that for any $q \in \mathbb{R}$ and $T>0$
        \begin{align}\label{2024explicit-eq:CEV_moment_bounds_original}
            \sup_{0 \leq t \leq T} \E \Big[ \vert \mathbb{X}_t \vert^q \Big]
            <
            \infty.
        \end{align}
    Consequently, one has for any $q \in \R$ and $T>0$
    \begin{align} \label{2024explicit-eq:CEV_inverse_moment_bounds_lamperti}
        \sup_{t \in [0,T]} 
        \mathbb{E} \Big[ \vert {X}_t \vert^{q} \Big]
        <
        +\infty.
    \end{align} 
    Thus Assumption \ref{2024explicit-ass:inverse_moment_boundedness_of_analytical_solution} is satisfied with $p^* = +\infty$.
\end{enumerate}
{\color{black}
Set
\begin{equation}
    \P(x) = \min\{ \max \{x , \Cs h^{\beta} \} ,\Cl h^{- \alpha}\},
\end{equation}
where $\Cs,\Cl$ are arbitrary positive constants and
\begin{equation}\label{2024explicit-eq:CEV_alpha_beta}
    \beta = \tfrac{1}{2(1 -\deg^-(\hat\mu))} =\tfrac{1-d}{2},
    \quad
    \alpha = \tfrac{1}{2\deg^+(\hat\mu)} = \half.
\end{equation}
Here, the constant $\Cs,\Cl$ can be chosen arbitrarily, but $\Cs$ should be sufficiently small and $\Cl$ sufficiently large to reduce the frequency of corrections, thereby reducing the bias.
Next, we verify whether $\P$ satisfies Assumptions \ref{2024explicit-ass:Ph_properties} and \ref{2024explicit-ass:hat_mu(Ph)_bound}.
Assertion \eqref{2024explicit-eq:Ph_contractivity} follow directly from the definition. 
Note that
\begin{equation}
\begin{aligned}
    \big\vert \P(x) - x \big\vert
    & =
    \mathbbm{1}_{\{ 0< x < \Cs h^\beta\}}
    \big\vert \P(x) - x \big\vert
    +
    \mathbbm{1}_{\{ x >  \Cl h^{-\alpha} \}}
    \big\vert \P(x) - x \big\vert.
\end{aligned}
\end{equation}
For the first term, since $0<\beta<2$,
$$
\mathbbm{1}_{\{ 0< x < \Cs h^\beta\}}
    \big\vert \P(x) - x \big\vert
    \leq
    \Cs h^{\beta}
    \leq
    (x^{-1} \Cs h^{\beta})^{\frac{2-\beta}{\beta}} 
    \cdot 
    \Cs h^{\beta}
    =
    \Cs^{\frac{2}{\beta}}h^2 \cdot x^{-\frac{2-\beta}{\beta}}.
$$
For the second term,
$$
\mathbbm{1}_{\{ x > \Cl h^{-\alpha} \}}
    \big\vert \P(x) - x \big\vert
    \leq
    x
    \leq
    x \cdot (x \Cl^{-1} h^{\alpha})^{\frac{2}{\alpha}}
    =
    \Cl^{-\frac{2}{\alpha}}h^2 \cdot x^{\frac{2 + \alpha}{\alpha}}.
$$
Therefore, Assertion \eqref{2024explicit-eq:correction_error} holds with 
$$m_1 = \tfrac{2-\beta}{\beta} = \tfrac{3+d}{1-d}, \quad m_2 = \tfrac{2+\alpha}{\alpha} = 5.$$
Based on the definition of FLP, for any $\Cs h^{\beta}< x < \Cl h^{-\alpha} $ one can directly derive
\begin{equation}
\begin{aligned}
    \big\vert
        \hat\mu( x )
    \big\vert
    +
    \big\vert
        \hat\mu'( x )
    \big\vert
    \leq
    C \Big(
        1 + \vert x \vert^{\deg^+(\hat\mu)}
        +
        \vert x \vert^{\deg^-(\hat\mu) - 1}
    \Big).
\end{aligned}
\end{equation}
\begin{itemize}
    \item If $\deg^+(\hat\mu)<0$, then
    \begin{equation*}
        \big\vert
        \hat\mu( x )
        \big\vert
        +
        \big\vert
        \hat\mu'( x )
        \big\vert
        \leq
        C \Big(
        1 
        +
        \vert x \vert^{\deg^-(\hat\mu) - 1}
        \Big)
        \leq
        C \Big(
        1 
        +
        C h^{\beta (\deg^-(\hat\mu) - 1)}
        \Big)
        =
        C h^{-\frac12}.
    \end{equation*}

    \item If $\deg^-(\hat\mu)-1 < 0 < \deg^+(\hat\mu)$, then
    \begin{equation*}
        \big\vert
        \hat\mu( x )
        \big\vert
        +
        \big\vert
        \hat\mu'( x )
        \big\vert
        \leq
        C \Big(
        1 
        +
        C h^{-\alpha (\deg^+(\hat\mu))}
        +
        C h^{\beta (\deg^-(\hat\mu) - 1)}
        \Big)
        =
        C h^{-\frac12}.
    \end{equation*}

    \item If $\deg^-(\hat\mu)-1 > 0$, then
    \begin{equation*}
        \big\vert
        \hat\mu( x )
        \big\vert
        +
        \big\vert
        \hat\mu'( x )
        \big\vert
        \leq
        C \Big(
        1 
        +
        \vert x \vert^{\deg^+(\hat\mu)}
        \Big)
        \leq
        C \Big(
        1 
        +
        C h^{-\alpha (\deg^+(\hat\mu))}
        \Big)
        =
        C h^{-\frac12}.
    \end{equation*}
\end{itemize}
Hence one obtains
\begin{equation}\label{2024explicit-eq:CEV-hatmu_Ph_bound}
    \big\vert \hat\mu(x) \big\vert 
    +
    \big\vert \hat\mu'(x) \big\vert
    \leq C h^{-\frac12},
    \quad 
    \text{for all}
    \quad
    \Cs h^{\beta}< x < \Cl h^{-\alpha},
\end{equation}
which immediately verifies the first assertion in Assumption \ref{2024explicit-ass:hat_mu(Ph)_bound}.
Moreover, for any $\Cs h^{\beta}< x, y < \Cl h^{-\alpha} $ the mean value theorem infers that
\begin{equation}\label{2024explicit-eq:cor_hatmu_MVT}
\begin{aligned}
    \big\vert &  \hat\mu(\P(x)) -  \hat\mu(\P(y))  \big\vert^2\\
    & =
    \bigg\vert 
        \int_0^1 
            \hat\mu'(\theta \P(x) + (1 - \theta) \P(y)) 
        \d \theta
    \bigg\vert^2
    \cdot
    \big\vert  \P(x) - \P(y)  \big\vert^2
\end{aligned}
\end{equation}
Noticing that
$\Cs h^{\beta} \leq \theta \P(x) + (1 - \theta) \P(y) \leq \Cl h^{-\alpha}$,
one obtains from \eqref{2024explicit-eq:CEV-hatmu_Ph_bound} that
\begin{equation}\label{2024explicit-eq:cor_hatmu'_MVT_bound}
    \big\vert
        \hat\mu'(\theta \P(x) + (1 - \theta) \P(y))
    \big\vert^2
    \leq
    C h^{-1}.
\end{equation}
Substituting \eqref{2024explicit-eq:cor_hatmu'_MVT_bound} into \eqref{2024explicit-eq:cor_hatmu_MVT} gives
\begin{equation}
\label{2024explicit-eq:mu'_h(P_h_X-P_h_Y)_estimate}
    \big\vert  \hat\mu(\P(x)) - \hat\mu(\P(y))  \big\vert^2
    \leq
    C h^{-1}
    \big\vert  \P(x) - \P(y)  \big\vert^2.
\end{equation}
The second assertion in Assumption \ref{2024explicit-ass:hat_mu(Ph)_bound} follows by combining \eqref{2024explicit-eq:mu'_h(P_h_X-P_h_Y)_estimate} with the contractivity of $\P$ \eqref{2024explicit-eq:Ph_contractivity}.
}

The scheme \eqref{2024explicit-eq:scheme} now reads as
\begin{equation}\label{2024explicit-eq:scheme_for_CEV}
    Y_{n+1}
    =
    \P(Y_n)
    +
    \tfrac{d\sigma^2}{2(1-d)} h Y_{n+1}^{-1}
    +
    \hat\mu(\P(Y_n)) h 
    +
    \sigma \Delta W_n,
\end{equation}
with
\begin{equation}
    \P(x)
    =
    \min\{ \max\{ x , \Cs h^{\frac{1-d}{2}} \}, \Cl h^{-\frac12}\}.
\end{equation}
Simple rearrangements give
\begin{equation}
\begin{aligned}
    R_{n+1}
    & =
    X_{t_n} - \P (X_{t_n})
    +
    \int_{t_n}^{t_{n+1}}
        \mu(X_s) 
    \d s
    -
    c_{-1} h X_{t_{n+1}}^{-1}
    -
    \hat \mu(\P(X_{t_n})) h\\
    & =
    X_{t_n} - \P (X_{t_n})
    +
    \int_{t_n}^{t_{n+1}}
    \Big(    
        \mu(X_s) - \mu(X_{t_n})
    \Big)
    \d s
    +
    c_{-1} h
    \big( X_{t_n}^{-1} - X_{t_{n+1}}^{-1} \big)\\
    & \quad
    +
    h \big(
    \hat \mu(X_{t_n})
    -
    \hat \mu(\P(X_{t_n})) 
    \big).
\end{aligned}
\end{equation}
The It\^o formula implies that
\begin{equation}
\begin{aligned}
    R_{n+1}
    & =
    X_{t_n} - \P (X_{t_n})\\
    & \quad
    +
    \int_{t_n}^{t_{n+1}}
    \int_{t_n}^s
    \Big(
        \mu'(X_r)\mu(X_r) + \tfrac{\sigma^2}{2}\mu''(X_r)
    \Big)
    \d r \d s
    +
    \int_{t_n}^{t_{n+1}}
    \int_{t_n}^s
    \mu'(X_r) \sigma
    \d W_r \d s\\
    & \quad +
    c_{-1} h 
    \int_{t_n}^{t_{n+1}}
    \Big(
        -X_s^{-2}\mu(X_s) + X_s^{-3}\sigma^2
    \Big)
    \d s
    +
    c_{-1} h 
    \int_{t_n}^{t_{n+1}}
        -X_s^{-2}\sigma
    \d W_s\\
    & \quad
    +
    h \big(
    \hat \mu(X_{t_n})
    -
    \hat \mu(\P(X_{t_n})) 
    \big).
\end{aligned}
\end{equation}
Clearly one has
\begin{equation}
\begin{split}
    R_{n+1}^{(1)}
    & = 
    X_{t_n} - \P (X_{t_n})
    +
    \int_{t_n}^{t_{n+1}}
    \int_{t_n}^s
    \Big(
        \mu'(X_r)\mu(X_r) + \tfrac{\sigma^2}{2}\mu''(X_r)
    \Big)
    \d r \d s\\
    & \quad +
    c_{-1} h 
    \int_{t_n}^{t_{n+1}}
    \Big(
        -X_s^{-2}\mu(X_s) + X_s^{-3}\sigma^2
    \Big)
    \d s
    +
    h \big(
    \hat \mu(X_{t_n})
    -
    \hat \mu(\P(X_{t_n})) 
    \big),\\
    R_{n+1}^{(2)}
    & =
    \int_{t_n}^{t_{n+1}}
    \int_{t_n}^s
    \mu'(X_r) \sigma
    \d W_r \d s
    +
    c_{-1} h 
    \int_{t_n}^{t_{n+1}}
        -X_s^{-2}\sigma
    \d W_s.
\end{split}
\end{equation}
We focus on $R_{n+1}^{(1)}$ first.
Observing that $p^{*} = + \infty$, inequality \eqref{2024explicit-eq:correction_error} gives
\begin{equation}\label{2024explicit-eq:CEV_X-PhX_bound}
    \E \Big[
        \big\vert X_t - \P(X_t) \big\vert^{2q}
    \Big]
    \leq
    C h^{4q}
\end{equation}
for all $ q \geq 1$.
Inequality \eqref{2024explicit-eq:CEV_inverse_moment_bounds_lamperti} implies that
\begin{equation}
\begin{split}
    \E \Big[
        \big\vert 
        \mu'(X_r)\mu(X_r) + \tfrac{\sigma^2}{2}\mu''(X_r)
        \big\vert^{2q}
    \Big]
    & <
    +\infty,\\
    \E \Big[
        \big\vert 
        -X_s^{-2}\mu(X_s) + X_s^{-3}\sigma^2
        \big\vert^{2q}
    \Big]
    & <
    +\infty
\end{split}
\end{equation}
for all $q \geq 1$.
Thus the H\"older inequality ensures
\begin{equation}
\begin{split}
    \E\Bigg[
    \bigg\vert
        \int_{t_n}^{t_{n+1}}
        \int_{t_n}^s
        \Big(
            \mu'(X_r)\mu(X_r) + \tfrac{\sigma^2}{2}\mu''(X_r)
        \Big)
        \d r \d s
    \bigg\vert^{2q}
    \Bigg]
    & \leq
    C h^{4q},\\
    \E\Bigg[
    \bigg\vert
        c_{-1} h 
        \int_{t_n}^{t_{n+1}}
        \Big(
            -X_s^{-2}\mu(X_s) + X_s^{-3}\sigma^2
        \Big)
        \d s
    \bigg\vert^{2q}
    \Bigg]
    & \leq
    C h^{4q}
\end{split}
\end{equation}
for all $q \geq 1$.

Now it remains to estimate $\hat \mu(X_{t_n})
-
\hat \mu(\P(X_{t_n}))$.
The mean value theorem infers that
$$
\hat \mu(X_{t_n})
-
\hat \mu(\P(X_{t_n}))
=
\int_0^1
\hat\mu'\big( \theta X_{t_n} + (1-\theta) \P(X_{t_n}) \big)
\d \theta
\cdot
\big(
    X_{t_n}
    -
    \P(X_{t_n})
\big).
$$
By the H\"older inequality, one attains for $ q \geq 1$
\begin{equation}\label{2024explicit-eq:hat_mu_X-Phx_bound}
\begin{split}
    \E& \Big[
        \big\vert
        \hat \mu(X_{t_n})
        -
        \hat \mu(\P(X_{t_n}))
        \big\vert^{2q}
    \Big]\\
    & \leq
    \E\bigg[
        \int_0^1
        \Big\vert
            \hat\mu'\big( \theta X_{t_n} + (1-\theta) \P(X_{t_n}) \big)
        \Big\vert^{4q}
        \d \theta
    \bigg]^{\frac12}
    \cdot
    \E\Big[\big\vert
        X_{t_n}
        -
        \P(X_{t_n})
    \big\vert^{4q}\Big]^{\frac12}
\end{split}
\end{equation}
Noting that $\deg^+(\hat\mu)-1=0$ and $\deg^-(\hat\mu)-1 = - \frac{1}{1-d}$, by the property of FLP, it holds that
\begin{equation*}
    \vert \hat\mu'(x) \vert
    \leq
    C \Big(
        1 + \vert x \vert^{- \frac{1}{1-d}}
    \Big).
\end{equation*}
Thus one has
\begin{equation*}
\begin{split}
    \big\vert 
        \hat\mu'\big( \theta X_{t_n} + (1-\theta) \P(X_{t_n}) \big)
    \big\vert
    & \leq
    C \Big(
        1 + \vert X_{t_n} \vert^{- \frac{1}{1-d}}
        +
        \vert \P(X_{t_n}) \vert^{- \frac{1}{1-d}}
    \Big)\\
    & \leq
    C \Big(
        1 + \vert X_{t_n} \vert^{- \frac{1}{1-d}}
        +
        \vert h^{\beta} \vert^{- \frac{1}{1-d}}
    \Big)
\end{split}
\end{equation*}
Consequently, by \eqref{2024explicit-eq:CEV_moment_bounds_original} and \eqref{2024explicit-eq:CEV_alpha_beta} one concludes that
\begin{equation}\label{2024explicit-eq:hat_mu'_MVT_bound}
\begin{split}
    \E\bigg[
        \Big\vert
            \hat\mu'\big( \theta X_{t_n} + (1-\theta) \P(X_{t_n}) \big)
        \Big\vert^{4q}
    \bigg]
    \leq
    C h^{-2 q}.
\end{split}
\end{equation}
Substituting \eqref{2024explicit-eq:CEV_X-PhX_bound} and \eqref{2024explicit-eq:hat_mu'_MVT_bound} into \eqref{2024explicit-eq:hat_mu_X-Phx_bound} leads to
\begin{equation}
\begin{split}
    \E& \Big[
        \big\vert
        \hat \mu(X_{t_n})
        -
        \hat \mu(\P(X_{t_n}))
        \big\vert^{2q}
    \Big]
    \leq
    C h^{3 q}
\end{split}
\end{equation}
for all $q \geq 1$.
A combination of the above analysis leads to
\begin{equation}\label{2024explicit-eq:CEV_Rn1_estimate}
    \E\Big[\big\vert R_{n+1}^{(1)} \big\vert^{2q} \Big]
    \leq
    C h^{4q},
    \quad
    \forall q \geq 1.
\end{equation}
The estimation of $R_{n+1}^{(2)}$ closely resembles the approaches taken in the CIR process.
Equipped with \eqref{2024explicit-eq:CEV_inverse_moment_bounds_lamperti}, one obtains by the H\"older inequality, the Jensen inequality and the moment inequality that
\begin{equation}\label{2024explicit-eq:CEV_Rn2_estimate}
    \E\Big[\big\vert R_{n+1}^{(2)} \big\vert^{2q} \Big]
    \leq
    C h^{3q},
    \quad
    \forall q \geq 1.
\end{equation}
Bearing \eqref{2024explicit-eq:CEV_Rn1_estimate} and \eqref{2024explicit-eq:CEV_Rn2_estimate} in mind,
Theorem \ref{2024explicit-thm:main_convergence_result}  gives
\begin{equation}
\begin{split}
        \E\Big[ \sup_{n=0,...,M} \big\vert e_{n} \big\vert^{2p} \Big]
        & \leq
        2 C \text{e}^{2T}
        \Bigg(
            \tfrac{1}{h^{2p}}
            \sup_{n = 1,...,M}
            \E \Big[ \big\vert R_{n}^{(1)} \big\vert^{2p} \Big]
            +
            \tfrac{1}{h^{p}}
            \sup_{n = 1,...,M}
            \E \bigg[
            \big\vert  R_{n}^{(2)} \big\vert^{2p}
            \bigg] 
        \Bigg)\\
        & \leq
        C h^{2p},
\end{split}
\end{equation}
for any $p \geq 1$.

Finally, by Lemma \ref{2024explicit-lem:X_integrability} and Lemma \ref{2024explicit-lem:Y_integrability}, transforming back yields that for any $p \geq 1$,
\begin{align*}
    & \E \Big[
        \sup_{n = 0,...,M} 
        \big\vert  \mathbb{X}_{t_n} - \mathbb{Y}_n \big\vert ^{2 p}
    \Big]
    = (1-d)^{\frac{2p}{1-d}}
    \E \bigg[
        \sup_{n = 0,...,M} 
        \Big\vert  
            X_{t_n}^{\frac{1}{1-d}}
            - 
            Y_n^{\frac{1}{1-d}}
        \Big\vert ^{2 p}
    \bigg]\\
    & \leq
    C \cdot \E \bigg[
        \sup_{n = 0,...,M} 
        \Big\vert 
        X_{t_n}^{\frac{d}{1-d}} + Y_n^{\frac{d}{1-d}} 
        \Big\vert ^{2 p}
        \cdot
        \big\vert  X_{t_n} - Y_n \big\vert ^{2 p}
    \bigg]\\
    & \leq 
    C h^{2 p}.
\end{align*}
\begin{proposition}
\label{2024explicit-prop:CEVmodel}
For any $p \geq 1$, the proposed scheme is $2p$-strongly convergent with order $1$ for the CEV process.
\end{proposition}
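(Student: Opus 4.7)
The plan is to instantiate the four-step framework from the beginning of Section \ref{2024explicit-sec:examples} for the CEV process. First I would apply the Lamperti transformation $\mathbb{L}(x)=\tfrac{1}{1-d}x^{1-d}$, converting \eqref{2024explicit-eq:CEV_original} into an additive-noise SDE whose drift $\mu$ is an FLP with $\deg^-(\mu)=-\tfrac{d}{1-d}$, $\deg^+(\mu)=1$, $\coeff^-(\mu)>0$ and $\coeff^+(\mu)<0$. Assumption \ref{2024explicit-ass:solution_existence_and_uniqueness} then follows from the Feller test, Assumption \ref{2024explicit-ass:mu_one_sided_Lipschitz} from Lemma \ref{2024explicit-lem:FLP_properties}, and because the CEV process enjoys finite moments of every real order, Assumption \ref{2024explicit-ass:inverse_moment_boundedness_of_analytical_solution} holds with $p^*=+\infty$; this removes all restrictions on $p$ in Theorem \ref{2024explicit-thm:main_convergence_result}.

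Next, guided by Remark \ref{2024explicit-remark_c-1}, I would split off $c_{-1}=\tfrac{d\sigma^2}{2(1-d)}$ so that the residual drift $\hat\mu$ preserves $\deg^-(\hat\mu)=-\tfrac{d}{1-d}$ with a positive leading coefficient. For the correction operator I would take the two-sided truncation
\[
\P(x)=\min\{\max\{x,\Cs h^{\beta}\},\Cl h^{-\alpha}\},
\]
with exponents $\beta=\tfrac{1-d}{2}$ and $\alpha=\tfrac{1}{2}$, chosen so that the FLP structure of $\hat\mu$ forces $|\hat\mu|+|\hat\mu'|\leq Ch^{-1/2}$ uniformly on $[\Cs h^\beta,\Cl h^{-\alpha}]$. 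Contractivity of $\P$ is immediate, the consistency bound \eqref{2024explicit-eq:correction_error} is obtained by trading powers of $h$ against inverse or positive powers of $x$ on the two rejected regions (yielding $m_1=\tfrac{3+d}{1-d}$, $m_2=5$), and Assumption \ref{2024explicit-ass:hat_mu(Ph)_bound} then follows from the $h^{-1/2}$ bound combined with the mean value theorem and the contractivity of $\P$.

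For the remainder analysis I would apply It\^o's formula to $\mu(X_s)$ and to $X_s^{-1}$ and then split $R_{n+1}=R_{n+1}^{(1)}+R_{n+1}^{(2)}$ into a Lebesgue part (containing $X_{t_n}-\P(X_{t_n})$ and $\hat\mu(X_{t_n})-\hat\mu(\P(X_{t_n}))$ together with the Riemann-in-time integrals) and a martingale part. Since $p^*=+\infty$, all moments and inverse moments of $X_t$ are controlled, so H\"older and the moment inequality yield $\E[|R_{n+1}^{(1)}|^{2q}]\leq Ch^{4q}$ and $\E[|R_{n+1}^{(2)}|^{2q}]\leq Ch^{3q}$ for every $q\geq 1$. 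Inserting these into Theorem \ref{2024explicit-thm:main_convergence_result} gives $\E[\sup_n|e_n|^{2p}]\leq Ch^{2p}$ for every $p\geq 1$. Finally I would invert the transformation via $\mathbb{L}^{-1}(x)=((1-d)x)^{1/(1-d)}$, use the mean-value bound $|\mathbb{L}^{-1}(x)-\mathbb{L}^{-1}(y)|\leq C(x^{d/(1-d)}+y^{d/(1-d)})|x-y|$, and close via H\"older using the moment bounds of $X_{t_n}$ and $Y_n$ from Lemmas \ref{2024explicit-lem:X_integrability} and \ref{2024explicit-lem:Y_integrability}.

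The main obstacle I anticipate is the correction-error term $\hat\mu(X_{t_n})-\hat\mu(\P(X_{t_n}))$ inside $R_{n+1}^{(1)}$: because $\hat\mu'$ blows up like $x^{-1/(1-d)}$ at the origin, the natural MVT estimate mixes inverse moments of $X_{t_n}$ with the $h^{-1/2}$ singularity coming from the lower truncation threshold. Arranging that, after multiplication by the explicit factor of $h$ in $R_{n+1}^{(1)}$, this piece still fits inside the $h^{4q}$ budget shared by the Itô-remainder and the consistency-of-$\P$ terms is the delicate calculation that pins down the exponents $\alpha=\tfrac{1}{2}$ and $\beta=\tfrac{1-d}{2}$ and ultimately produces the order-one rate claimed in the proposition.
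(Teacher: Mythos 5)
Your proposal is correct and follows essentially the same route as the paper: same Lamperti transformation and verification of Assumptions \ref{2024explicit-ass:solution_existence_and_uniqueness}--\ref{2024explicit-ass:inverse_moment_boundedness_of_analytical_solution} with $p^*=+\infty$, same choice $c_{-1}=\tfrac{d\sigma^2}{2(1-d)}$, the identical two-sided truncation $\P$ with $\beta=\tfrac{1-d}{2}$, $\alpha=\tfrac12$ and $m_1=\tfrac{3+d}{1-d}$, $m_2=5$, the same It\^o-based splitting of $R_{n+1}$ with bounds $Ch^{4q}$ and $Ch^{3q}$, and the same inverse-transformation step using only positive moments of $X$ and $Y$. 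The obstacle you flag, controlling $h\bigl(\hat\mu(X_{t_n})-\hat\mu(\P(X_{t_n}))\bigr)$ via the mean value theorem against the $h^{-1/2}$ singularity of $\hat\mu'$ at the lower threshold, is exactly the calculation the paper carries out (yielding $\E\bigl[\vert\hat\mu(X_{t_n})-\hat\mu(\P(X_{t_n}))\vert^{2q}\bigr]\leq Ch^{3q}$, hence $h^{2q}\cdot Ch^{3q}\leq Ch^{4q}$ within the stated budget).
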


\subsection{A\"it-Sahalia model}

The Ait-Sahalia model, introduced by Yacine Aït-Sahalia \cite{ait1996testing}, is a sophisticated nonlinear SDE that serves as a pivotal tool in financial mathematics for modeling the temporal evolution of interest rates. 
This model has gained considerable recognition for its ability to capture the complex dynamics of the spot rate and other financial variables, including volatility.
In \cite{szpruch2011strongly}, Higham et al. investigated a backward Euler method for a generalized version of the model, given by
\begin{align}
    \d \mathbb{X}_t
    =
    (\alpha_{-1} \mathbb{X}_t^{-1}
    -
    \alpha_0
    +
    \alpha_1 \mathbb{X}_t
    -
    \alpha_2 \mathbb{X}_t^r)
    \d t
    +
    \alpha_3 \mathbb{X}_t^\rho \, \d W_t,
\quad
t \in (0, T],
\quad
\mathbb{X}_0 = x_0 >0,
\end{align}
where $\alpha_{-1},\alpha_0,\alpha_1,\alpha_2, \alpha_3$ are positive constants and $r,\rho > 1 $.
In this paper we focus on the non-critical case, i.e., the case that $r+1 > 2 \rho$.
By applying the Lamperti-type transformation
$\mathbb{L}: (0,+\infty) \rightarrow (0,+\infty)$
of the form
$\mathbb{L}(x) := \tfrac{1}{\rho-1} x^{1 - \rho}$, 
one obtains the following SDE:
\begin{align}
        \d X_t
        =
        \mu(X_t) \d t
        +
        \sigma \d W_t,
\end{align}
where $\sigma=-\alpha_3$ and
\begin{equation}
\begin{aligned}
\label{2024explicit-eq:Ait-Sahalia_mu}
    \mu(x)
    & =
       - \alpha_{-1} 
        \big( (\rho-1) x \big)^{\frac{\rho + 1}{ \rho - 1 }}
        +
        \alpha_0 
        \big( (\rho-1) x \big)^{\frac{ \rho }{ \rho-1 }}
        -
        \alpha_1 
        \big( (\rho-1) x \big)
        +
        \tfrac{\sigma^2}{2} \rho   
        \big( (\rho-1) x \big)^{-1}\\
        & \quad
        +
        \alpha_2 
        \big( (\rho-1) x \big)^{\frac{ \rho - r }{\rho - 1}}.
\end{aligned}
\end{equation}
Recall Remark \ref{2024explicit-remark_c-1} and set 
$c_{-1}  = \tfrac{\sigma^2 \rho}{2 (\rho-1)}$.
Hence
\begin{equation}\label{2024explicit-eq:Ait_hat_mu_c-1_definition}
    \hat\mu(x)
    =
    - \alpha_{-1} 
    \big( (\rho-1) x \big)^{\frac{\rho + 1}{ \rho - 1 }}
    +
    \alpha_0 
    \big( (\rho-1) x \big)^{\frac{ \rho }{ \rho-1 }}
    -
    \alpha_1 
    \big( (\rho-1) x \big)
    +
    \alpha_2 
    \big( (\rho-1) x \big)^{\frac{ \rho - r }{\rho - 1}}.
\end{equation}
Note that $\mu$ and $\hat\mu$ are FLPs with 
\begin{align*}
    \deg^{-}(\mu) &=\deg^{-}(\hat\mu)= \tfrac{\rho - r}{\rho - 1},&
    \deg^{+}(\mu) &=\deg^{+}(\hat\mu)= \tfrac{\rho + 1}{\rho - 1},\\ 
    \coeff^{-}(\mu) &= \alpha_{2} (\rho-1)^{\frac{\rho - r}{\rho - 1}},&
    \coeff^{+}(\mu) &= -\alpha_{-1} (\rho-1)^{\frac{\rho+1}{\rho-1}}.
\end{align*}
Below we verify the assumptions imposed on the SDE model.
\begin{enumerate}
    \item From \cite{szpruch2011strongly}, it is known that Assumption \ref{2024explicit-ass:solution_existence_and_uniqueness} holds for $D = (0, +\infty)$.

    \item Assumption \ref{2024explicit-ass:mu_one_sided_Lipschitz} can be easily confirmed by observing that
    ${\deg}^{-}(\mu) < 0 < {\deg}^{+}(\mu)$
    and
    $\coeff^+(\mu) < 0 < \coeff^-(\mu)$,
    thanks to Lemma \ref{2024explicit-lem:FLP_properties}.

    \item It can also be found in \cite{szpruch2011strongly} that for any $q \in \mathbb{R}$ and $T>0$
        \begin{align}\label{2024explicit-eq:Ait_moment_bounds_original}
            \sup_{0 \leq t \leq T} \E \Big[ \vert \mathbb{X}_t \vert^q \Big]
            <
            \infty.
        \end{align}
    Consequently, one has for any $q \in \R$ and $T>0$
    \begin{align} \label{2024explicit-eq:Ait_inverse_moment_bounds_lamperti}
        \sup_{t \in [0,T]} 
        \mathbb{E} \Big[ \vert {X}_t \vert^{q} \Big]
        <
        +\infty.
    \end{align} 
    Thus Assumption \ref{2024explicit-ass:inverse_moment_boundedness_of_analytical_solution} is satisfied with $p^* = +\infty$.
\end{enumerate}
Set
\begin{equation}
    \P(x) = \min\{ \max\{ x , \Cs h^{\beta} \},  \Cl h^{- \alpha}\},
\end{equation}
where $\Cs,\Cl$ are arbitrary positive constants and
\begin{equation}\label{2024explicit-eq:Ait_alpha_beta}
    \beta = \tfrac{1}{2(1 -\deg^-(\hat\mu))} =\tfrac{\rho-1}{2(r-1)},
    \quad
    \alpha = \tfrac{1}{2\deg^+(\hat\mu)} = \tfrac{\rho-1}{2(\rho+1)}.
\end{equation}
Following the same lines as in the CEV setting, it can be verified that $\P$ fulfills Assumptions \ref{2024explicit-ass:Ph_properties} and \ref{2024explicit-ass:hat_mu(Ph)_bound} with
$$m_1 = \tfrac{2-\beta}{\beta} = \tfrac{4 r - \rho - 3}{\rho - 1},
\quad 
m_2 = \tfrac{2+\alpha}{\alpha} = \tfrac{5\rho + 3}{\rho -1}.$$
The scheme \eqref{2024explicit-eq:scheme} now read as
\begin{equation}\label{2024explicit-eq:scheme_for_Ait-Sahalia}
    Y_{n+1}
    =
    \P(Y_n)
    +
    \tfrac{\sigma^2 \rho}{2(\rho - 1)} h Y_{n+1}^{-1}
    +
    \hat\mu(\P(Y_n)) h 
    +
    \sigma \Delta W_n,
\end{equation}
with
\begin{equation}
    \P
    =
    \min \{ \max \{ x , \Cs h^{\frac{\rho-1}{2(r-1)}} \}, \Cl h^{-\frac{\rho-1}{2(\rho+1)}}\}.
\end{equation}
Simple rearrangements give
\begin{equation}
\begin{aligned}
    R_{n+1}
    & =
    X_{t_n} - \P (X_{t_n})
    +
    \int_{t_n}^{t_{n+1}}
        \mu(X_s) 
    \d s
    -
    c_{-1} h X_{t_{n+1}}^{-1}
    -
    \hat \mu(\P(X_{t_n})) h\\
    & =
    X_{t_n} - \P (X_{t_n})
    +
    \int_{t_n}^{t_{n+1}}
    \Big(    
        \mu(X_s) - \mu(X_{t_n})
    \Big)
    \d s
    +
    c_{-1} h
    \big( X_{t_n}^{-1} - X_{t_{n+1}}^{-1} \big)\\
    & \quad
    +
    h \big(
    \hat \mu(X_{t_n})
    -
    \hat \mu(\P(X_{t_n})) 
    \big).
\end{aligned}
\end{equation}
The It\^o formula implies that
\begin{equation}
\begin{aligned}
    R_{n+1}
    & =
    X_{t_n} - \P (X_{t_n})\\
    & \quad
    +
    \int_{t_n}^{t_{n+1}}
    \int_{t_n}^s
    \Big(
        \mu'(X_r)\mu(X_r) + \tfrac{\sigma^2}{2}\mu''(X_r)
    \Big)
    \d r \d s
    +
    \int_{t_n}^{t_{n+1}}
    \int_{t_n}^s
    \mu'(X_r) \sigma
    \d W_r \d s\\
    & \quad +
    c_{-1} h 
    \int_{t_n}^{t_{n+1}}
    \Big(
        -X_s^{-2}\mu(X_s) + X_s^{-3}\sigma^2
    \Big)
    \d s
    +
    c_{-1} h 
    \int_{t_n}^{t_{n+1}}
        -X_s^{-2}\sigma
    \d W_s\\
    & \quad
    +
    h \big(
    \hat \mu(X_{t_n})
    -
    \hat \mu(\P(X_{t_n})) 
    \big).
\end{aligned}
\end{equation}
Clearly one has
\begin{equation}
\begin{split}
    R_{n+1}^{(1)}
    & = 
    X_{t_n} - \P (X_{t_n})
    +
    \int_{t_n}^{t_{n+1}}
    \int_{t_n}^s
    \Big(
        \mu'(X_r)\mu(X_r) + \tfrac{\sigma^2}{2}\mu''(X_r)
    \Big)
    \d r \d s\\
    & \quad +
    c_{-1} h 
    \int_{t_n}^{t_{n+1}}
    \Big(
        -X_s^{-2}\mu(X_s) + X_s^{-3}\sigma^2
    \Big)
    \d s
    +
    h \big(
    \hat \mu(X_{t_n})
    -
    \hat \mu(\P(X_{t_n})) 
    \big),\\
    R_{n+1}^{(2)}
    & =
    \int_{t_n}^{t_{n+1}}
    \int_{t_n}^s
    \mu'(X_r) \sigma
    \d W_r \d s
    +
    c_{-1} h 
    \int_{t_n}^{t_{n+1}}
        -X_s^{-2}\sigma
    \d W_s.
\end{split}
\end{equation}
Observing that $p^{*} = + \infty$, and following the same approach as for the CEV model, it can be shown that for any $ q \geq 1$,
\begin{equation}\label{2024explicit-eq:Ait_Rn_estimate}
    \E\Big[\big\vert R_{n+1}^{(1)} \big\vert^{2q} \Big]
    \leq
    C h^{4q},
    \quad
    \E\Big[\big\vert R_{n+1}^{(2)} \big\vert^{2q} \Big]
    \leq
    C h^{3q}.
\end{equation}
Therefore,
Theorem \ref{2024explicit-thm:main_convergence_result}  gives
\begin{equation}
\begin{split}
        \E\Big[ \sup_{n=0,...,M} \big\vert e_{n} \big\vert^{2p} \Big]
        & \leq
        2 C \text{e}^{2T}
        \Bigg(
            \tfrac{1}{h^{2p}}
            \sup_{n = 1,...,M}
            \E \Big[ \big\vert R_{n}^{(1)} \big\vert^{2p} \Big]
            +
            \tfrac{1}{h^{p}}
            \sup_{n = 1,...,M}
            \E \bigg[
            \big\vert  R_{n}^{(2)} \big\vert^{2p}
            \bigg] 
        \Bigg)\\
        & \leq
        C h^{2p},
\end{split}
\end{equation}
for any $p \geq 1$.

Finally, by Lemma \ref{2024explicit-lem:X_integrability} and Lemma \ref{2024explicit-lem:Y_integrability}, transforming back yields that
\begin{equation}\label{2024explicit-eq:Ait_trans_back_before_analyzing_inverse_moment}
\begin{aligned}
    & \E \Big[
        \sup_{n = 0,...,M} 
        \big\vert  \mathbb{X}_{t_n} - \mathbb{Y}_n \big\vert ^{2 p}
    \Big]
    = (\rho-1)^{\frac{2p}{1-\rho}}
    \E \bigg[
        \sup_{n = 0,...,M} 
        \Big\vert  
            X_{t_n}^{\frac{1}{1-\rho}}
            - 
            Y_n^{\frac{1}{1-\rho}}
        \Big\vert ^{2 p}
    \bigg]\\
    & \leq
    C \cdot \E \bigg[
        \sup_{n = 0,...,M} 
        \Big\vert 
        X_{t_n}^{\frac{\rho}{1-\rho}} + Y_n^{\frac{\rho}{1-\rho}} 
        \Big\vert ^{2 p}
        \cdot
        \big\vert  X_{t_n} - Y_n \big\vert ^{2 p}
    \bigg].
\end{aligned}
\end{equation}
Lemma \ref{2024explicit-lem:inverse_moment_bounds_Y} together with \eqref{2024explicit-eq:Ait_Rn_estimate} infers that 
\begin{align*}
		\E \bigg[
	      \sup_{n = 0,...,M}
            \big\vert Y_n \big\vert^{-2q}
        \bigg]
        & \leq
        C \tfrac{1}{h^{2 q - 1}}
        \Bigg(
        \sup_{n = 0,...,M-1}
        \E \Big[ \big\vert R_{n+1}^{(1)} \big\vert^{2 q} \Big]
        +
        \sup_{n = 0,...,M-1}
        \E \Big[ \big\vert R_{n+1}^{(2)} \big\vert^{2 q} \Big]
        \Bigg)\\
        & \quad
        +
        C \E \bigg[
	      \sup_{t \in [0,T]}
            \big\vert X_t \big\vert^{-2q}
        \bigg]\\
        & \leq
        C h^q
        +
        C \E \bigg[
	      \sup_{t \in [0,T]}
            \big\vert X_t \big\vert^{-2q}
        \bigg].
	\end{align*}
Using Lemma \ref{2024explicit-thm:inverse_moment_bound_X} and \eqref{2024explicit-eq:Ait_inverse_moment_bounds_lamperti} one can further deduce that
\begin{equation}
\begin{split}
    \E \bigg[
	      \sup_{n = 0,...,M}
            \big\vert Y_n \big\vert^{-2q}
    \bigg]
    & \leq
    C h^q
    +
    C_q
        \sup_{t \in [0,T]} 
        \E \bigg[
			\vert X_t\vert ^{-(2 q + 2)}
		\bigg] 
   \leq
   C_q
\end{split}
\end{equation}
for any $q \geq 1$.
Consequently, it can be deduced from \eqref{2024explicit-eq:Ait_trans_back_before_analyzing_inverse_moment} that
\begin{equation*}
\begin{aligned}
    \E \Big[
        \sup_{n = 0,...,M} 
        \big\vert  \mathbb{X}_{t_n} - \mathbb{Y}_n \big\vert ^{2 p}
    \Big]
    \leq
    C h^{2p},
    \quad
    \forall p \geq 1.
\end{aligned}
\end{equation*}

\begin{proposition}
\label{2024explicit-prop:ASmodel}
For any $p \geq 1$, the proposed scheme is $2p$-strongly convergent with order $1$ for the A\"it-Sahalia model.
\end{proposition}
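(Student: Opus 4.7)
The plan is to follow the four-step template of Section \ref{2024explicit-sec:examples}, mirroring what was done for the CEV model. First I would verify the hypotheses for the transformed SDE. Assumption \ref{2024explicit-ass:solution_existence_and_uniqueness} and the full-range moment bound \eqref{2024explicit-eq:Ait_moment_bounds_original} are available from \cite{szpruch2011strongly}, which simultaneously yield $p^{*}=+\infty$ in Assumption \ref{2024explicit-ass:inverse_moment_boundedness_of_analytical_solution} together with \eqref{2024explicit-eq:Ait_inverse_moment_bounds_lamperti}. The one-sided Lipschitz property of $\mu$ (Assumption \ref{2024explicit-ass:mu_one_sided_Lipschitz}) follows from Lemma \ref{2024explicit-lem:FLP_properties} thanks to the sign pattern of $\coeff^{\pm}(\mu)$ and the degrees listed after \eqref{2024explicit-eq:Ait_hat_mu_c-1_definition}. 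With $c_{-1}=\tfrac{\sigma^{2}\rho}{2(\rho-1)}$, the modified drift $\hat\mu$ is again an FLP in which the negative-degree singular power of $\mu$ has been eliminated, so Assumption \ref{2024explicit-ass:hat_mu_one_sided_Lipschitz} is likewise a consequence of Lemma \ref{2024explicit-lem:FLP_properties} (cf. Remark \ref{2024explicit-remark_c-1}). For the correction operator $\P(x)=\min\{\max\{x,\Cs h^{\beta}\},\Cl h^{-\alpha}\}$ with $\beta,\alpha$ fixed as in \eqref{2024explicit-eq:Ait_alpha_beta}, contractivity \eqref{2024explicit-eq:Ph_contractivity} is immediate, while the consistency bound \eqref{2024explicit-eq:correction_error} and both parts of Assumption \ref{2024explicit-ass:hat_mu(Ph)_bound} follow by the same three-case split used for the CEV process: the chosen exponents are precisely the ones that balance $h^{\beta\deg^{-}(\hat\mu)}$ at the lower barrier and $h^{-\alpha\deg^{+}(\hat\mu)}$ at the upper barrier against the target order $h^{-1/2}$.

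Next I would decompose $R_{n+1}=R_{n+1}^{(1)}+R_{n+1}^{(2)}$ via the It\^o formula applied to both $\mu(X_{s})$ and $c_{-1}X_{s}^{-1}$, and include the extra deterministic term $h\big(\hat\mu(X_{t_{n}})-\hat\mu(\P(X_{t_{n}}))\big)$ in the $(1)$-part. All routine double-integral pieces give $O(h^{4q})$ for $R_{n+1}^{(1)}$ and $O(h^{3q})$ for $R_{n+1}^{(2)}$ after H\"older's inequality, the martingale moment inequality and \eqref{2024explicit-eq:Ait_inverse_moment_bounds_lamperti}. The main obstacle is the correction-bias term: since $\hat\mu'$ blows up like $x^{\deg^{-}(\hat\mu)-1}$ near $0$, no pointwise Lipschitz estimate is available. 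I would dominate it by the mean value theorem, then Cauchy--Schwarz, then the FLP growth of $\hat\mu'$ combined with the uniform lower barrier $\P(X_{t_{n}})\geq \Cs h^{\beta}$; the choice of $\beta$ in \eqref{2024explicit-eq:Ait_alpha_beta} is tuned so that $\E\big\vert\hat\mu'(\theta X_{t_{n}}+(1-\theta)\P(X_{t_{n}}))\big\vert^{4q}\leq C h^{-2q}$, and a consistency estimate of type \eqref{2024explicit-eq:CEV_X-PhX_bound} supplies $\E\vert X_{t_{n}}-\P(X_{t_{n}})\vert^{4q}\leq Ch^{8q}$, which is legitimate because $p^{*}=+\infty$. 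Combining the two via the Cauchy--Schwarz bound gives the advertised $h^{3q}$ control and hence \eqref{2024explicit-eq:Ait_Rn_estimate}. Plugging this into Theorem \ref{2024explicit-thm:main_convergence_result} yields $\E[\sup_{n}\vert e_{n}\vert^{2p}]\leq Ch^{2p}$ for every $p\geq 1$.

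Finally I would transport the estimate back through $\mathbb{L}^{-1}(x)=((\rho-1)x)^{1/(1-\rho)}$. Because the exponent $1/(1-\rho)$ is negative, the mean value theorem produces a prefactor of the form $C\big(x^{\rho/(1-\rho)}+y^{\rho/(1-\rho)}\big)$, i.e.\ inverse powers of the arguments. An application of H\"older's inequality therefore reduces the task to controlling arbitrarily high inverse moments of both $X_{t_{n}}$ and $Y_{n}$. The former is immediate from \eqref{2024explicit-eq:Ait_inverse_moment_bounds_lamperti}. The latter is exactly what Lemma \ref{2024explicit-lem:inverse_moment_bounds_Y} delivers once the $R_{n}^{(i)}$ bounds derived above are substituted and Lemma \ref{2024explicit-thm:inverse_moment_bound_X} is used to absorb the residual inverse moment of $X$; the resulting inverse-moment bound on $Y_{n}$ is uniform in $h$. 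Multiplying by the $O(h^{2p})$ strong error for the transformed equation and optimizing the H\"older exponents then gives the claimed order-one $2p$-strong convergence. Since $p^{*}=+\infty$, no upper restriction on $p$ emerges, in contrast to the CIR and Heston-$3/2$ cases.
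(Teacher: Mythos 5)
Your proposal is correct and follows essentially the same route as the paper: the same verification of the assumptions via \cite{szpruch2011strongly}, Lemma \ref{2024explicit-lem:FLP_properties} and Remark \ref{2024explicit-remark_c-1}, the same correction operator with exponents \eqref{2024explicit-eq:Ait_alpha_beta}, the same It\^o-formula decomposition of $R_{n+1}$ with the CEV-style bounds \eqref{2024explicit-eq:Ait_Rn_estimate}, Theorem \ref{2024explicit-thm:main_convergence_result} for the transformed error, and the inverse transformation handled through Lemma \ref{2024explicit-lem:inverse_moment_bounds_Y} and Lemma \ref{2024explicit-thm:inverse_moment_bound_X}. One wording slip worth fixing: the choice $c_{-1}=\tfrac{\sigma^2\rho}{2(\rho-1)}$ eliminates only the $x^{-1}$ term, not the most singular power $x^{(\rho-r)/(\rho-1)}$ of $\mu$, which necessarily survives in $\hat\mu$ (indeed $\coeff^-(\hat\mu)>0$ is what makes Lemma \ref{2024explicit-lem:FLP_properties} applicable), exactly as your own later treatment of the blow-up of $\hat\mu'$ near zero presupposes.
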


\section{Numerical Experiments}\label{2024explicit-sec:numerical_experiments}

This section presents numerical experiments to validate the theoretical findings.
We focus on evaluating the error decay rate.
Specifically, the approximation errors for the SDE models discussed in Section \ref{2024explicit-sec:examples} will be calculated in terms of 
$$
    e_M: = \bigg(  \E\Big[ \sup\limits_{n=0,1,...,M} \vert \mathbb{X}_{t_n} - \mathbb{Y}_n \vert^2 \Big] \bigg)^{\half}.
$$
The proposed explicit scheme \eqref{2024explicit-eq:scheme} and the LBEM will be both implemented for comparison.
Due to the absence of closed-form solutions and the exact expectations, two computational approximations are implemented:
\begin{itemize}
    \item Analytical solution approximation: A fine step-size LBEM scheme($h^{*} = 2^{-15}$) replaces the the analytical solution.
    \item Expectation approximation: The mathematical expectation is replaced by a Monte Carlo simulation with $M=10^4$ independent Brownian paths.
\end{itemize}
Under a fixed time $T=1$, we use step sizes $h = 2^{-i}$, $i= 5,6,7,8,9$ to investigate the numerical approximations and their convergence behavior of both the proposed scheme and LBEM.
The implicit equations arising in the LBEM implementation will be solved using Newton-Raphson iterations.
With these approaches, we obtain an approximation of $e_M$ of the following examples:
\begin{example}\label{2024explicit-eg:CIR}
   The CIR model with $\kappa = 0.35, \theta = 0.1, \hat\sigma = 0.1$,  $\mathbb{X}_0 = 0.1$ and $T=1$:
    \begin{equation}
	\d \mathbb{X}_t 
	= 
	0.35 (0.1 - \mathbb{X}_t) \, \d t 
	+ 
	0.1 \sqrt{\mathbb{X}_t} \, \d W_t , 
        \quad t \in [0,1], \qquad \mathbb{X}_0 = 0.1.
    \end{equation}
    Here the Feller index $\nu = \tfrac{2 \kappa \theta}{\sigma^2} = 7$.
    We choose $c_{-1}=2\big(\kappa \theta - \tfrac{\sigma^2}{4}\big)=0.065$ and $\P = I$ for the proposed scheme.
\end{example}

\begin{example}\label{2024explicit-eg:Heston}
The Heston-3/2 volatility with $a_1 = 0.8, a_2 = 0.1, a_3 = 0.5$, $\mathbb{X}_0 = \sin^2(0.9)$ and $T=1$:
\begin{align*}
    \d \mathbb{X}_t
    =
    0.8 \mathbb{X}_t \big( 0.1 - \mathbb{X}_t \big) \d t
    +
    0.5 \mathbb{X}_t ^{\frac{3}{2}} \, \d W_t.
\end{align*}
We choose $c_{-1} = 2(a_1 + \tfrac34 a_3^2) = 1.975$ and $\P = I$ for the proposed scheme.

\end{example}

\begin{example}\label{2024explicit-eg:CEV}
The CEV model with 
    $\kappa = 0.35, \theta = 0.1, \hat\sigma = 0.1$, $d=0.65$,  $\mathbb{X}_0 = 0.1$ and $T=1$:
    \begin{align}
    \d \mathbb{X}_t
    =
    0.35 (0.1 - \mathbb{X}_t) \d t
    +
    0.1 \mathbb{X}_t ^ {0.65}\, \d W_t.
    \end{align}
We choose $c_{-1}  = \tfrac{d \hat\sigma^2}{2(1 - d)} = \tfrac{13}{1400}$ and 
$$
    \P(x)
    =
    x \vee  h^{\frac{1-d}{2}} \wedge h^{-\frac12}
$$
for the proposed scheme.
\end{example}

\begin{example}\label{2024explicit-eg:Ait-Sahalia}
The A\"it Sahalia model with $ \alpha_{-1}=1.5, \alpha_0 = 2, \alpha_1=1, \alpha_2=2, \alpha_3=1, r=3, \rho = 1.5$, $\mathbb{X}_0 = 0.5$ and $T=1$:
\begin{align}
    \d \mathbb{X}_t
    =
    (1.5 \mathbb{X}_t^{-1}
    -
    2
    +
    \mathbb{X}_t
    -
    2\mathbb{X}_t^3)
    \d t
    +
    \mathbb{X}_t^{1.5} \, \d W_t,
\end{align}
We choose $c_{-1}  = \tfrac{\alpha_3^2 \rho}{2 (\rho-1)}=1.5$ and 
$$
\P(x)
    =
    x \vee h^{\frac{\rho-1}{2(r-1)}} \wedge 10 h^{-\frac{\rho-1}{2(\rho+1)}}
$$
for the proposed scheme.
\end{example}


\begin{figure}[htbp]
    \centering
    \begin{minipage}{0.48\linewidth}
        \centering
        \includegraphics[width = 1\linewidth]{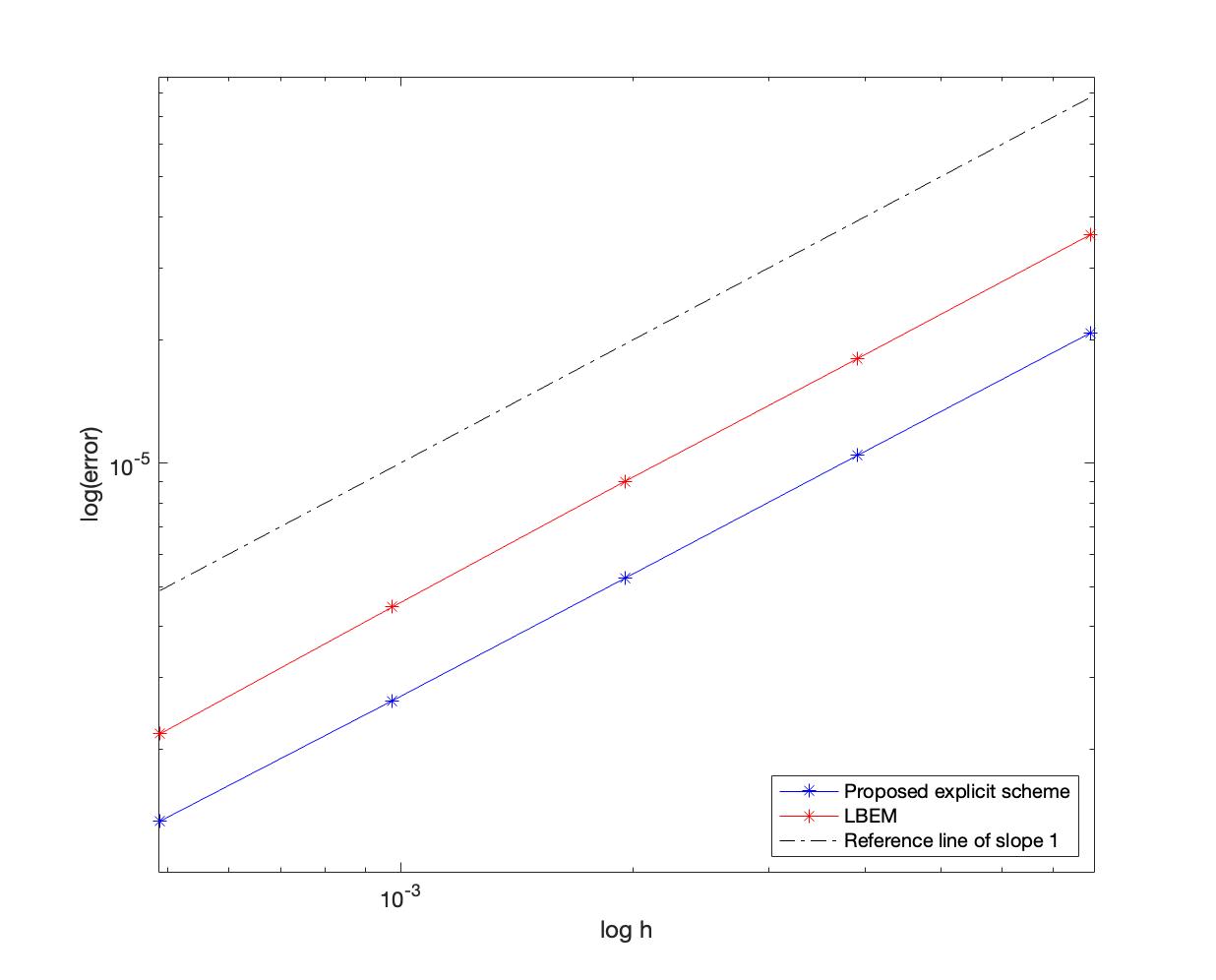}
        \caption{  CIR model  }
        \label{fig.CIR}
    \end{minipage}
    \begin{minipage}{0.5\linewidth}
        \centering
        \includegraphics[width = 1\linewidth]{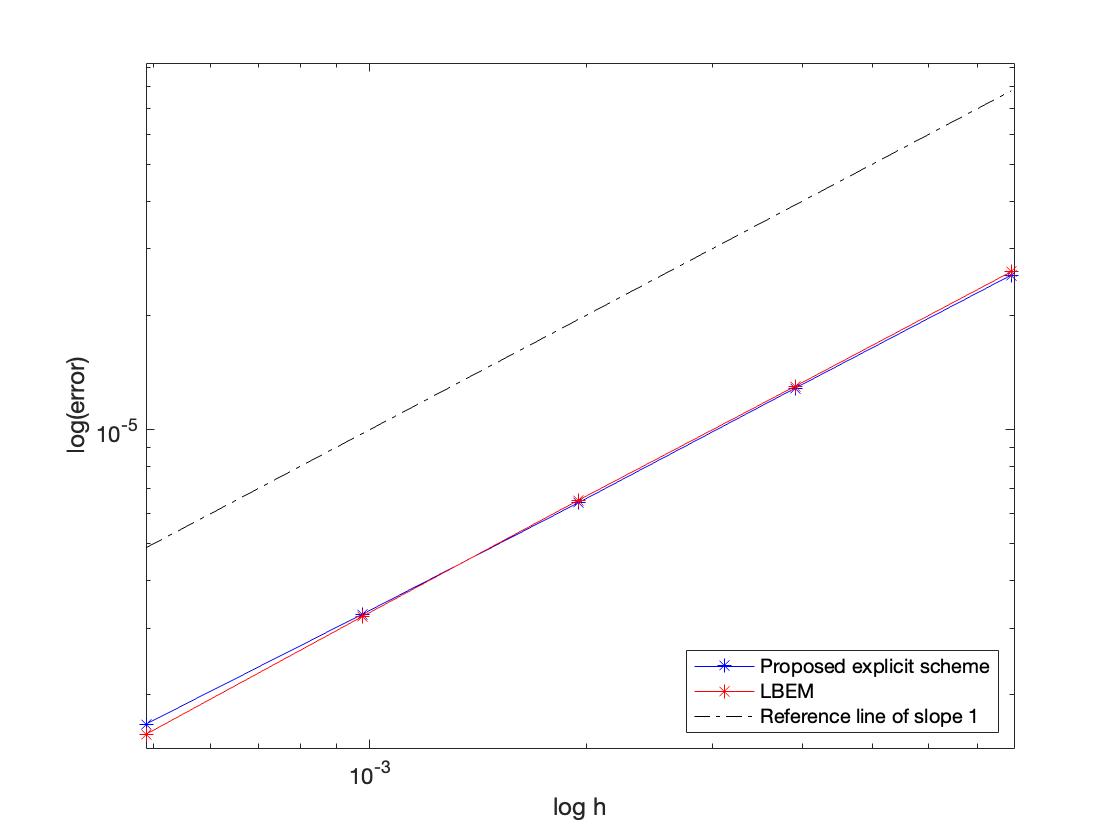}
        \caption{ Heston-3/2 model
        }
        \label{2024explicit-fig:Heston}
    \end{minipage}
\end{figure}

\begin{figure}[htbp]
    \centering
    \begin{minipage}{0.48\linewidth}
        \centering
        \includegraphics[width = 1\linewidth]{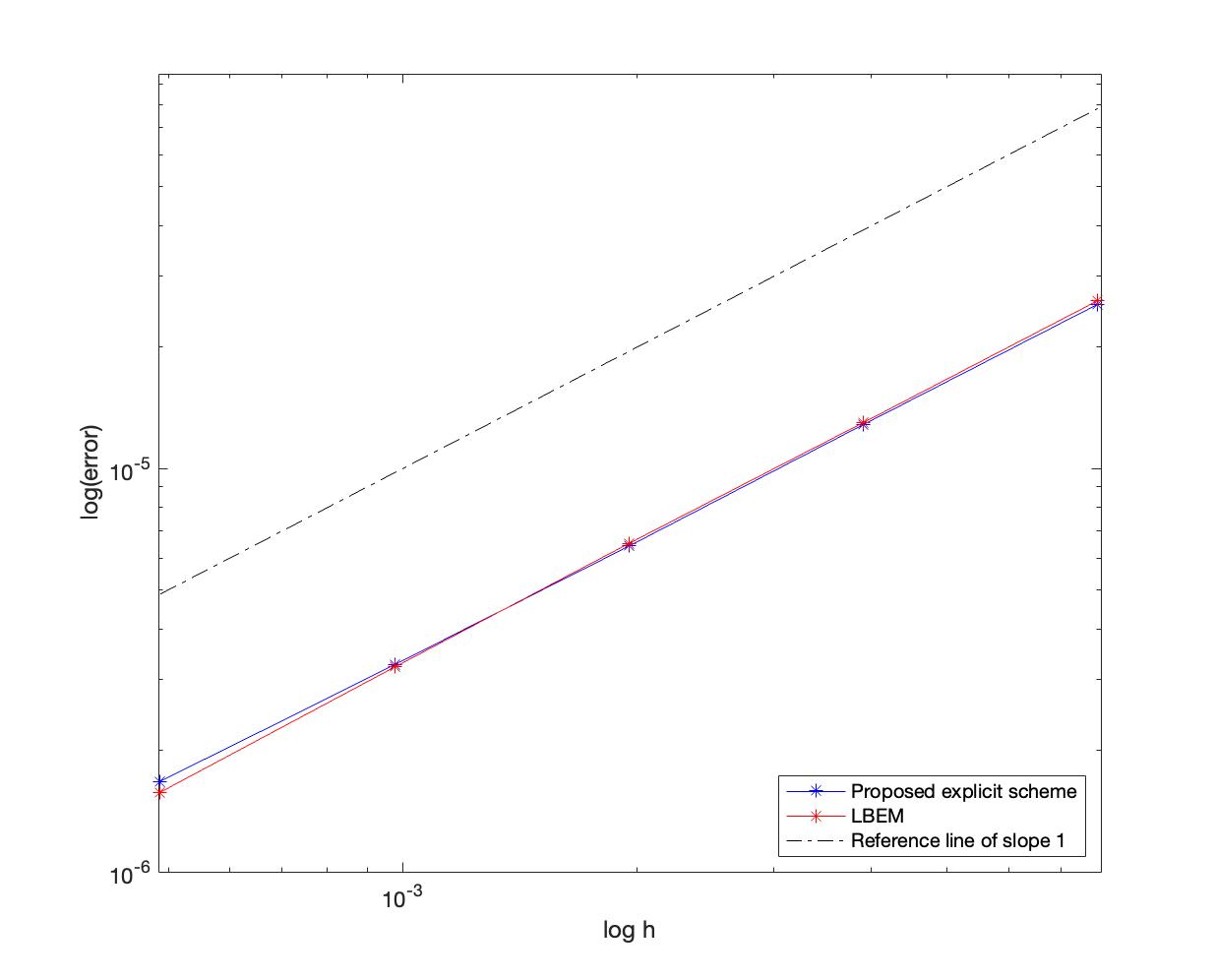}
        \caption{ CEV model }
        \label{fig.CEV}
    \end{minipage}
    \begin{minipage}{0.5\linewidth}
        \centering
        \includegraphics[width = 1\linewidth]{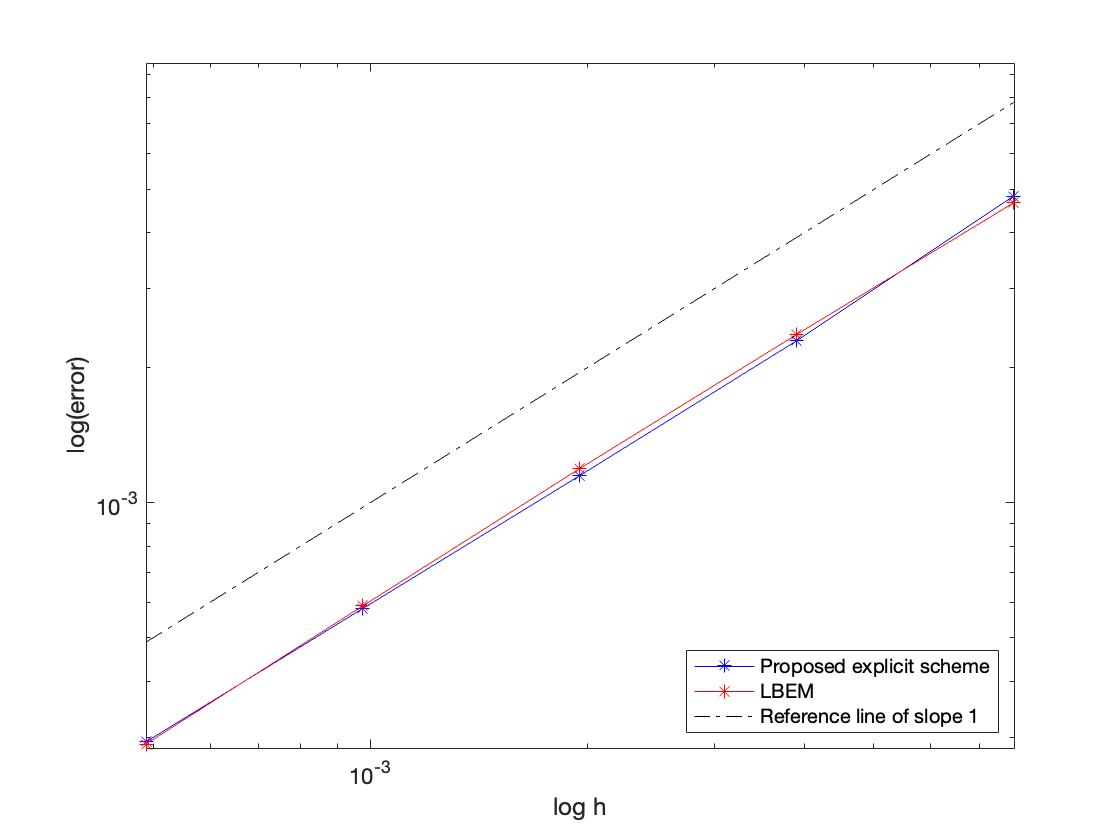}
        \caption{ A\"it-Sahalia model }
        \label{2024explicit-fig:Ait-Sahalia}
    \end{minipage}
\end{figure}

\begin{table}[htp]
    \centering
    \setlength{\tabcolsep}{4mm}
    \caption{ \bf Least-squares fit for the convergence rate q}
    \label{2024explicit-tab:convergence_rate}
    \begin{tabular}{l l l}
        \toprule[2pt]
            & Proposed explicit scheme & LBEM scheme\\
        \midrule
            Ex \ref{2024explicit-eg:CIR} (CIR) & 
            q=0.9909, resid=0.0070 & 
            q=1.0111, resid=0.0152 \\
            Ex \ref{2024explicit-eg:Heston} (Heston-3/2) & 
            q=1.0429, resid=0.0398 &
            q=1.0198, resid=0.0180\\
            Ex \ref{2024explicit-eg:CEV} (CEV) & 
            q=0.9838, resid=0.0167 & 
            q=1.0113, resid=0.0154\\
            Ex \ref{2024explicit-eg:Ait-Sahalia} (A\"it-Sahalia) &
            q=1.0073, resid=0.0378 & 
            q=1.0031, resid=0.0227\\
        \bottomrule[2pt]
\end{tabular}
\end{table}

\begin{table}[htp]
	\centering
	\setlength{\tabcolsep}{5mm}
    \caption{ \bf Time cost (seconds) over $10^4$ Brownian paths}
    \label{2024explicit-tab:time_costs}
	\begin{tabular}{l c c}
		\toprule[2pt]
		  & Proposed explicit scheme & LBEM\\
		\midrule 
		  Ex \ref{2024explicit-eg:CIR} (CIR) & 7.459 & 18.236 \\
            Ex \ref{2024explicit-eg:Heston} (Heston-3/2)  &  7.105  & 18.747\\
            Ex \ref{2024explicit-eg:CEV} (CEV) & 12.959 & 43.460 \\
            Ex \ref{2024explicit-eg:Ait-Sahalia} (A\"it-Sahalia) & 23.298 & 162.213 \\
		\bottomrule [2pt]
	\end{tabular}
	\vspace{2pt}
\end{table}

Figures \ref{fig.CIR}-\ref{2024explicit-fig:Ait-Sahalia} 
show a log-log plot of the step size $h$ versus the approximate error ${\tt err}_{M}^{N,h^*}$. 
The estimates of the proposed scheme are given by black lines, and the corresponding estimates for the LBEM scheme are given by red lines. 
The black dashed lines are reference ones with slope $1$.
From the figures, it can be observed that the proposed scheme achieves the same convergence rate of order $1$ as LBEM, while maintaining lower computational costs due to its explicit structure (See Table \ref{2024explicit-tab:time_costs}).
Notably, for the CIR model, the proposed scheme achieves superior error reduction under the same step sizes. 
We corroborate this also by performing a linear regression to estimate the convergence  rates.
By assuming that $ \log \left({\tt err}_{M}^{N,h^*} \right)
= \log C + q \log h$, the convergence rate $q$ and the least square residual can be obtained with a least-squares fitting, as presented in Table \ref{2024explicit-tab:convergence_rate}.
These results validate the expected convergence rate.

\section{Conclusion}\label{2024explicit-section:conclusion}
	In this manuscript, we proposed and analyzed an explicit time-stepping scheme for scalar SDEs defined in a domain. 
    Based on a Lamperti-type transformation and a taming procedure, this numerical scheme preserves the domain of the original equation and is strongly convergent with order $1$. 
    Our scheme is an explicit version of the Lamperti-backward Euler scheme from \cite{alfonsi2013strong,neuenkirch2014first} and has the same convergence order under the same conditions, but with lower computational costs. 
    Our explicit scheme is applicable to many SDEs from applications, and our theoretical findings are supported by numerical experiments.
    Future research directions in this area could be to further explore higher order explicit schemes for this class of equations.

\section{Appendix}
We present a discrete version of the Burkholder-Davis-Gundy inequality.
The following settings and theorem can be found in \cite[Theorem 1.1]{burkholder1972integral}.

Let $u=(u_1,u_2,...)$ be a martingale.
Denote
    \begin{align*}
        u^* = \sup_{1 \leq n < + \infty} \vert u_n \vert,
        \quad
        S(u) = \Bigg[ \sum_{k=1}^{+\infty} d_k^2 \Bigg]^{\frac12},
    \end{align*}
where $d = (d_1,d_2,...)$ is the difference sequence of $u$:
\begin{equation*}
    u_n = \sum_{k=1}^n d_k.
\end{equation*}

\begin{theorem}[Burkholder-Davis-Gundy]
\label{2024explicit-thm:discrete_BDG}
    Suppose that $\Phi$ is a convex function from $[0,+\infty)$ to $[0,+\infty)$ satisfying $\Phi(0) = 0$ and the growth condition
    \begin{equation*}
        \Phi(2 \lambda) \leq 2 C \Phi(\lambda),
        \quad
        \lambda > 0.
    \end{equation*}
    Set $\Phi(+\infty) = \lim_{\lambda \rightarrow +\infty} \Phi(\lambda)$.
    Then
    \begin{equation}
        c \E \Phi(S(u)) \leq \E \Phi(u^*) \leq C \E \Phi(S(u)).
    \end{equation}
\end{theorem}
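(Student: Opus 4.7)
The plan is to follow Burkholder's classical good-$\lambda$ argument, which is the standard route to this two-sided comparison between the maximal function and the square function of a martingale. Since the statement is quoted verbatim from \cite{burkholder1972integral}, we may either cite that reference directly or reproduce its proof. I sketch the reproduction below.

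The first step is to set up two families of stopping times tailored to the events we wish to control. For fixed $\lambda>0$ and a small parameter $\delta>0$, define
$\mu=\inf\{n:|u_n|>\lambda\}$, $\nu=\inf\{n:|u_n|>\beta\lambda\}$ with $\beta>1$ (say $\beta=2$), and
$\tau=\inf\{n:\sum_{k=1}^{n+1}d_k^{2}>\delta^{2}\lambda^{2}\}$. The key pointwise identity is that, on the event $\{u^{*}>\beta\lambda,\ S(u)\le\delta\lambda\}$, one must have $\mu<\nu\le\tau$, and the stopped martingale increment $v_n:=u_{n\wedge\nu\wedge\tau}-u_{n\wedge\mu\wedge\tau}$ is well-defined with $|v_\infty|\ge(\beta-1)\lambda$ on this event, while its square function is bounded by $\delta\lambda$. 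Applying the elementary $L^{2}$-orthogonality $\E|v_\infty|^{2}=\E\sum d_k^{2}\mathbbm{1}_{\{\mu<k\le\nu\wedge\tau\}}$, restricted to $\{\mu<\infty\}=\{u^{*}>\lambda\}$, yields the good-$\lambda$ inequality
\begin{equation*}
P\bigl(u^{*}>\beta\lambda,\ S(u)\le\delta\lambda\bigr)
\le \frac{\delta^{2}}{(\beta-1)^{2}}\,P(u^{*}>\lambda).
\end{equation*}
A fully symmetric stopping-time construction, now with the roles of $u^{*}$ and $S(u)$ swapped, produces the companion estimate
$P(S(u)>\beta\lambda,\ u^{*}\le\delta\lambda)\le\frac{\delta^{2}}{(\beta-1)^{2}}P(S(u)>\lambda)$.

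The second step is to convert these distributional inequalities into $\Phi$-moment inequalities using the layer-cake representation
$\E\Phi(X)=\int_{0}^{\infty}P(X>\lambda)\,d\Phi(\lambda)$ for nonnegative $X$. Writing
$\E\Phi(u^{*})=\int P(u^{*}>\beta\lambda,S(u)\le\delta\lambda)\,d\Phi(\beta\lambda)+\int P(S(u)>\delta\lambda)\,d\Phi(\beta\lambda)$ after the change of variables, the good-$\lambda$ bound plus the moderate growth assumption $\Phi(2\lambda)\le 2C\Phi(\lambda)$ (which in particular gives $\Phi(\beta\lambda)\le K_\beta\Phi(\lambda)$ and $\Phi(\lambda/\delta)\le K_\delta\Phi(\lambda)$ for the relevant constants) transforms these into
\begin{equation*}
\E\Phi(u^{*})\le K_\beta\tfrac{\delta^{2}}{(\beta-1)^{2}}\E\Phi(u^{*})+K_{\beta,\delta}\,\E\Phi(S(u)).
\end{equation*}
Choosing $\delta$ small enough (relative to $\beta$ and $C$) so that $K_\beta\delta^{2}/(\beta-1)^{2}<\tfrac12$, one absorbs $\tfrac12\E\Phi(u^{*})$ into the left side and obtains $\E\Phi(u^{*})\le C'\E\Phi(S(u))$. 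The same integration, applied to the symmetric good-$\lambda$ inequality, yields $\E\Phi(S(u))\le c'\E\Phi(u^{*})$, which together give the two-sided comparison in the statement.

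The main obstacle is the good-$\lambda$ estimate itself: it requires the careful stopping-time construction above and an honest use of martingale orthogonality to bound the $L^{2}$ norm of the stopped increment by the square function. Two technical subtleties must also be handled: (i) verifying the construction also when $\E\Phi(u^{*})=+\infty$, which is resolved by first truncating $u$ to $u\wedge N$ and sending $N\to\infty$ with monotone convergence, using the hypothesis $\Phi(\infty)=\lim_{\lambda\to\infty}\Phi(\lambda)$ to justify finiteness at the limit; and (ii) extracting the moderate growth constant uniformly, which follows by iterating $\Phi(2\lambda)\le 2C\Phi(\lambda)$ a fixed number of times depending only on $\beta$ and $\delta$. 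Everything else is bookkeeping, and the final constants $c,C$ depend only on the growth constant in the hypothesis on $\Phi$.
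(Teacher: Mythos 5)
The first thing to note is that the paper does not prove this theorem at all: it is quoted verbatim from \cite[Theorem 1.1]{burkholder1972integral} and used as a black box in the appendix, so there is no in-paper argument to compare against and a direct citation would suffice. Judged on its own terms, your reproduction of the good-$\lambda$ argument has a genuine gap at its central step. The time $\tau=\inf\{n:\sum_{k=1}^{n+1}d_k^{2}>\delta^{2}\lambda^{2}\}$ is \emph{not} a stopping time: $\{\tau\le n\}=\{\sum_{k=1}^{n+1}d_k^{2}>\delta^{2}\lambda^{2}\}$ is $\mathcal{F}_{n+1}$-measurable, not $\mathcal{F}_n$-measurable. Consequently $v_n:=u_{n\wedge\nu\wedge\tau}-u_{n\wedge\mu\wedge\tau}$ is not a stopped martingale, and the orthogonality identity $\E|v_\infty|^{2}=\E\sum_k d_k^{2}\mathbbm{1}_{\{\mu<k\le\nu\wedge\tau\}}$ that carries your whole estimate requires the indicator to be $\mathcal{F}_{k-1}$-measurable, which it is not. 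If you repair this by using the genuine stopping time $\tau'=\inf\{n:\sum_{k=1}^{n}d_k^{2}>\delta^{2}\lambda^{2}\}$, then orthogonality holds, but the stopped square function is only bounded by $\delta^{2}\lambda^{2}+d_{\tau'}^{2}$, and the overshoot $d_{\tau'}^{2}$ is uncontrolled off the good event $\{S(u)\le\delta\lambda\}$, so the bound $\E|v_\infty|^{2}\le\delta^{2}\lambda^{2}\,P(\mu<\infty)$ fails. This trade-off (predictability of the transforming sequence versus pathwise control of the stopped square function) is precisely where discrete time differs from continuous time, where paths have no jumps and your argument is the standard one; the symmetric inequality you invoke suffers from the same overshoot problem at the level $|u_\sigma|$.

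This is not a constants issue that can be absorbed by taking $\delta$ small. The theorem as stated admits $\Phi(\lambda)=\lambda$ (convex, $\Phi(0)=0$, doubling with $C=1$), so any complete proof contains Davis's $L^{1}$ inequality $c\,\E[S(u)]\le\E[u^{*}]\le C\,\E[S(u)]$, and the known proofs of that require either the Davis decomposition of $u$ into a predictably small-jump part plus a part controlled by the maximal difference $d^{*}=\sup_k|d_k|$, or a detour through the conditional square function $s(u)=\big(\sum_k\E[d_k^{2}\,\vert\,\mathcal{F}_{k-1}]\big)^{1/2}$ --- for which the shifted first-passage time \emph{is} a stopping time, since $s_{n+1}$ is $\mathcal{F}_n$-measurable --- followed by a comparison of $S(u)$, $s(u)$ and $d^{*}$. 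None of this machinery appears in your sketch, and the closing claim that what remains is ``bookkeeping'' understates the difficulty; the smaller slip that the overshoot gives $|v_\infty|\ge(\beta-1-\delta)\lambda$ rather than $(\beta-1)\lambda$ is, by contrast, harmless. The cleanest course, and the one the paper takes, is to cite \cite[Theorem 1.1]{burkholder1972integral}; a self-contained proof must add the Davis decomposition (or the conditional-square-function route) to make the good-$\lambda$ step rigorous in discrete time.
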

We also quote from \cite[Lemma 10.2]{liu2025strong} a discrete form of the Gronwall lemma.
\begin{lemma}[Gronwall inequality]
\label{2024explicit-lem:Gronwall}
    Let $x_0=0$, $x_n \geq 0$ for $n=1,2, \ldots, M$ and  $\delta, \zeta, \eta \geq 0$.
    If
    $$
    x_{n+1} \leq \delta + \zeta \sum_{k=0}^n  x_k  + \eta \sqrt{x_M}, \quad n =0,1, \ldots, M-1,
    $$
    then one has
    $$
    x_{M} \leq 2(\delta +\eta^2) \exp(2 \zeta M).
    $$
\end{lemma}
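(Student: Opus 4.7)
The plan is to reduce the $\sqrt{x_M}$ nuisance to a standard discrete Gronwall argument and then close the loop by solving a quadratic inequality. The key observation is that although the term $\eta\sqrt{x_M}$ depends on the terminal index $M$, with respect to the running index $n$ it is a \emph{constant}. So first I would freeze $\eta\sqrt{x_M}$ and apply the classical discrete Gronwall lemma to the recursion
\begin{equation*}
  x_{n+1} \leq A + \zeta \sum_{k=0}^n x_k, \quad n=0,1,\ldots,M-1,
\end{equation*}
where $A := \delta + \eta\sqrt{x_M}$, treating $A$ as a constant for the purpose of the induction.

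Next I would prove by induction on $n$ that $x_n \leq A(1+\zeta)^{n-1}$ for all $n=1,\ldots,M$. The base case $n=1$ follows from $x_0=0$. For the induction step, if the bound holds up to index $n$, then
\begin{equation*}
  x_{n+1} \leq A + \zeta\sum_{k=1}^{n} A(1+\zeta)^{k-1}
  = A + A\bigl((1+\zeta)^{n}-1\bigr) = A(1+\zeta)^n,
\end{equation*}
which closes the induction. In particular, specialising to $n=M$ and using $(1+\zeta)^{M-1}\leq e^{\zeta M}$ yields
\begin{equation*}
  x_M \leq (\delta + \eta\sqrt{x_M})\, e^{\zeta M}.
\end{equation*}

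The final step is a quadratic trick. Introduce $y:=\sqrt{x_M}\geq 0$, so that the previous inequality becomes $y^2 - \eta e^{\zeta M} y - \delta e^{\zeta M} \leq 0$. Solving this quadratic inequality gives
\begin{equation*}
  y \leq \tfrac{1}{2}\Bigl(\eta e^{\zeta M} + \sqrt{\eta^2 e^{2\zeta M} + 4\delta e^{\zeta M}}\Bigr).
\end{equation*}
Squaring both sides and applying $(a+b)^2 \leq 2a^2+2b^2$ followed by $e^{\zeta M}\leq e^{2\zeta M}$ produces $x_M \leq \eta^2 e^{2\zeta M} + 2\delta e^{2\zeta M} \leq 2(\delta+\eta^2)\exp(2\zeta M)$, which is exactly the claimed bound.

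The only subtle point is not to confuse the running bound (which depends on $n$) with the fixed perturbation $\eta\sqrt{x_M}$; once one realises this term is constant in $n$, the linear discrete Gronwall applies verbatim and the remaining work is the elementary quadratic manipulation. I expect no genuine obstacle—just careful bookkeeping of the constants to land precisely on the factor $2(\delta+\eta^2)$ and the exponent $2\zeta M$ rather than weaker variants.
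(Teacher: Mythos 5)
Your proof is correct. Note, however, that the paper does not actually prove this lemma --- it is quoted without proof from \cite[Lemma 10.2]{liu2025strong} --- so there is no in-paper argument to compare against; your write-up supplies a complete, self-contained justification. All three steps check out: (i) freezing $A:=\delta+\eta\sqrt{x_M}$ (constant in the running index $n$) and running the standard discrete Gronwall induction gives $x_n\leq A(1+\zeta)^{n-1}$ for $n=1,\ldots,M$; here your identity $\zeta\sum_{k=1}^{n}(1+\zeta)^{k-1}=(1+\zeta)^{n}-1$ is a polynomial identity, so the step is valid also in the degenerate case $\zeta=0$, where the geometric-sum formula with $\zeta$ in the denominator would not apply; (ii) the bound $(1+\zeta)^{M-1}\leq e^{\zeta M}$ turns this into $x_M\leq(\delta+\eta\sqrt{x_M})\,e^{\zeta M}$; (iii) setting $y=\sqrt{x_M}$, the nonpositive quadratic $y^2-\eta e^{\zeta M}y-\delta e^{\zeta M}\leq 0$ forces $y$ below its larger root, and squaring with $(a+b)^2\leq 2a^2+2b^2$ and $e^{\zeta M}\leq e^{2\zeta M}$ lands exactly on $x_M\leq \eta^2 e^{2\zeta M}+2\delta e^{2\zeta M}\leq 2(\delta+\eta^2)\exp(2\zeta M)$. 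This freeze-then-solve-the-quadratic structure is precisely what the constant $2(\delta+\eta^2)$ and the doubled exponent $2\zeta M$ in the statement are designed to absorb, and your bookkeeping of those constants is accurate.
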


\bibliographystyle{abbrv}
\bibliography{reference.bib}

 \end{document}